
\documentclass{amsart}
\usepackage{amscd,amsmath,xypic,amssymb,combelow,tikz-cd,etoolbox,calligra,mathrsfs,enumitem}

\DeclareMathOperator{\sHom}{\mathscr{H}\text{\kern -3pt {\calligra\large om}}\,}
\DeclareMathOperator{\sRHom}{\mathscr{RH}\text{\kern -3pt {\calligra\large om}}\,}
\DeclareMathOperator{\sQuot}{\mathscr{Q}\text{\kern -3pt {\calligra\large uot}}\,}
\DeclareMathOperator{\sZ}{\mathscr{Z}\,}

\makeatletter
\patchcmd{\@settitle}{\uppercasenonmath\@title}{}{}{}
\makeatother

\xyoption{all}
\CompileMatrices

\emergencystretch=2cm
\newcommand{\nc}{\newcommand}

\makeatletter
\@addtoreset{equation}{section}
\makeatother

\newtheorem{theorem}[subsection]{Theorem}
\newtheorem{proposition}[subsection]{Proposition}
\newtheorem{lemma}[subsection]{Lemma}
\newtheorem{corollary}[subsection]{Corollary}

\newtheorem{definition}[subsection]{Definition}

\newtheorem{claim}[subsection]{Claim}

\newtheorem{example}[subsection]{Example}
\newtheorem{remark}[subsection]{Remark}

\nc{\fa}{{\mathfrak{a}}}
\nc{\fb}{{\mathfrak{b}}}
\nc{\fg}{{\mathfrak{g}}}
\nc{\fh}{{\mathfrak{h}}}
\nc{\fj}{{\mathfrak{j}}}
\nc{\fn}{{\mathfrak{n}}}
\nc{\fm}{{\mathfrak{m}}}
\nc{\fu}{{\mathfrak{u}}}
\nc{\fp}{{\mathfrak{p}}}
\nc{\fr}{{\mathfrak{r}}}
\nc{\ft}{{\mathfrak{t}}}
\nc{\fsl}{{\mathfrak{sl}}}
\nc{\fgl}{{\mathfrak{gl}}}
\nc{\hsl}{{\widehat{\mathfrak{sl}}}}
\nc{\hgl}{{\widehat{\mathfrak{gl}}}}
\nc{\hg}{{\widehat{\mathfrak{g}}}}
\nc{\chg}{{\widehat{\mathfrak{g}}}{}^\vee}
\nc{\hn}{{\widehat{\mathfrak{n}}}}
\nc{\chn}{{\widehat{\mathfrak{n}}}{}^\vee}

\nc{\Mod}{{\textrm{Mod}}}

\nc{\wGL}{{\widehat{GL}^+}}

\nc{\BA}{{\mathbb{A}}}
\nc{\BC}{{\mathbb{C}}}
\nc{\BM}{{\mathbb{M}}}
\nc{\BN}{{\mathbb{N}}}
\nc{\BF}{{\mathbb{F}}}
\nc{\BH}{{\mathbb{H}}}
\nc{\BP}{{\mathbb{P}}}
\nc{\BQ}{{\mathbb{Q}}}
\nc{\BR}{{\mathbb{R}}}
\nc{\BZ}{{\mathbb{Z}}}

\nc{\ff}{{\mathbb{F}}}
\nc{\kk}{{\mathbb{K}}}
\nc{\kko}{{\mathbb{K}}}

\nc{\coh}{{\text{Coh}}}

\nc{\CA}{{\mathcal{A}}}
\nc{\CC}{{\mathcal{C}}}
\nc{\CB}{{\mathcal{B}}}
\nc{\DD}{{\mathcal{D}}}
\nc{\CE}{{\mathcal{E}}}
\nc{\CF}{{\mathcal{F}}}
\nc{\tCF}{{\widetilde{\CF}}}
\nc{\tCT}{{\widetilde{\CT}}}
\nc{\oCF}{{\overline{\CF}}}
\nc{\CG}{{\mathcal{G}}}
\nc{\CL}{{\mathcal{L}}}
\nc{\CK}{{\mathcal{K}}}
\nc{\CI}{{\mathcal{I}}}
\nc{\CM}{{\mathcal{M}}}
\nc{\CH}{{\mathcal{H}}}
\nc{\CN}{{\mathcal{N}}}
\nc{\CO}{{\mathcal{O}}}
\nc{\CP}{{\mathcal{P}}}
\nc{\CR}{{\mathcal{R}}}
\nc{\CQ}{{\mathcal{Q}}}
\nc{\CS}{{\mathcal{S}}}
\nc{\CT}{{\mathcal{T}}}
\nc{\tCU}{{\widetilde{\CU}}}
\nc{\CU}{{\mathcal{U}}}
\nc{\CV}{{\mathcal{V}}}
\nc{\CW}{{\mathcal{W}}}

\nc{\tpsi}{{\widetilde{\Psi}}}
\nc{\wpi}{{\widetilde{\pi}}}
\nc{\Ker}{{\text{Ker }}}

\nc{\CX}{{\mathcal{X}}}
\nc{\tCX}{{\widetilde{\mathcal{X}}}}
\nc{\CY}{{\mathcal{Y}}}
\nc{\tCY}{{\widetilde{\mathcal{Y}}}}

\nc{\tN}{{\widetilde{\CN}}}
\nc{\pN}{{\BP\widetilde{\CN}}}

\nc{\tT}{{T}}

\nc{\fC}{{\mathfrak{C}}}
\nc{\fZ}{{\mathfrak{Z}}}
\nc{\fU}{{\mathfrak{U}}}
\nc{\fV}{{\mathfrak{V}}}
\nc{\fS}{{\mathfrak{S}}}

\nc{\od}{{\overline{d}}}
\nc{\rg}{{\textrm{R}\Gamma}}
\nc{\erg}{{\emph{R}\Gamma}}
\nc{\id}{{\textrm{Id}}}
\nc{\rhom}{{\textrm{RHom}}}

\def\Ext{\textrm{Ext}}
\def\Hom{\textrm{Hom}}
\def\RHom{\textrm{RHom}}
\def\e{\varepsilon}

\def\tab{\text{} \\}

\def\pt{\textrm{pt}}
\def\and{\textrm{ }\&\textrm{ }}

\def\sym{\textrm{Sym}}
\def\esym{\emph{\sym}}

\def\tCF{\widetilde{\CF}}

\def\Tan{\text{Tan}}

\def\km{K_\CM}

\def\kms{K_{\CM \times S}}
\def\kmss{K_{\CM \times S \times S}}
\def\kmms{K_{\CM' \times S}}

\def\ks{K_S}
\def\kss{K_{S \times S}}

\def\rk{\text{rank }}
\def\big{\text{big}}
\def\ebig{\emph{big}}
\def\sm{\text{sm}}
\def\esm{\emph{sm}}
\def\fr{\text{Frac }\BF}

\def\quot{\text{Quot}}
\def\flag{\text{Flag}}

\def\b{{\Big|_{\infty - 0}}}
\def\i{{\int_{\infty - 0}}}

\newcommand{\pa}[1]{\left\{\mkern-4mu\left\{#1\right\}\mkern-4mu\right\}}

\def\bfZ{\bar{\fZ}}

\begin{document}

\title[Shuffle algebras associated to surfaces]{\large{\textbf{SHUFFLE ALGEBRAS ASSOCIATED TO SURFACES}}}
\author[Andrei Negu\cb t]{Andrei Negu\cb t}
\address{MIT, Department of Mathematics, Cambridge, MA, USA}
\address{Simion Stoilow Institute of Mathematics, Bucharest, Romania}
\email{andrei.negut@gmail.com}

\maketitle

\renewcommand{\thefootnote}{\fnsymbol{footnote}} 
\footnotetext{\emph{2010 Mathematics Subject Classification: }Primary 14J60, Secondary 14D21}     
\renewcommand{\thefootnote}{\arabic{footnote}} 

\renewcommand{\thefootnote}{\fnsymbol{footnote}} 
\footnotetext{\emph{Key words: }moduli space of semistable sheaves, shuffle algebra, Ding-Iohara-Miki algebra}     
\renewcommand{\thefootnote}{\arabic{footnote}} 

\begin{abstract} We consider the algebra of Hecke correspondences (elementary transformations at a single point) acting on the algebraic $K$--theory groups of the moduli spaces of stable sheaves on a smooth projective surface $S$. We derive quadratic relations between the Hecke correspondences, and compare the algebra they generate with the Ding-Iohara-Miki algebra (at a suitable specialization of parameters), as well as with a generalized shuffle algebra.

\end{abstract}

\section{Introduction} 
\label{sec:introduction}

\medskip

\subsection{}

We work over an algebraically closed field of characteristic 0, which we assume is $\BC$. Fix a smooth projective surface $S$ with an ample divisor $H \subset S$, and fix a choice of rank and first Chern class $(r,c_1) \in \BN \times H^2(S,\BZ)$. We will study the moduli space of $H$--stable sheaves with these invariants and arbitrary second Chern class:
\begin{equation}
\label{eqn:m}
\CM = \bigsqcup_{c_2 = \left \lceil \frac {r-1}{2r} c_1^2 \right \rceil}^\infty \CM_{(r,c_1,c_2)}
\end{equation}
The reason for the lower bound on $c_2$ is Bogomolov's inequality, which states that the moduli space of stable sheaves is empty if $c_2 < \frac {r-1}{2r} c_1^2$. We make the following:
\begin{equation}
\label{eqn:assumption a}
\textcolor{red}{\textbf{Assumption A: }} \gcd(r, c_1 \cdot H) = 1
\end{equation}
which implies (see for example Corollary 4.6.7 of \cite{HL}) that:
$$
\exists \textcolor{red}{\textbf{ a universal sheaf }} \CU \textcolor{red}{\textbf{ on }} \CM \times S
$$
and moreover, the moduli space $\CM$ is projective (which is a consequence of the fact that under Assumption A, any semistable sheaf is stable). One could do without Assumption A, but then universal sheaves exist only locally on the moduli space $\CM$, and one would have to adapt the contents of the present paper to the setting of twisted coherent sheaves (\cite{C}). We foresee no major difficulty in doing so, but also no crucial benefit, and so we leave the details to the interested reader. \\

\noindent One of the most important objects of study for us are the $K$--theory groups:
\begin{equation}
\label{eqn:k}
K_\CM = \bigoplus_{c_2 = \left \lceil \frac {r-1}{2r} c_1^2 \right \rceil}^\infty K_{\CM_{(r,c_1,c_2)}}
\end{equation}
There are two contexts in which we will make sense of these $K$--theory groups. The first one,  somewhat particular, is when we make the restriction the appears in \cite{Ba}: 
\begin{equation}
\label{eqn:assumption s}
\textcolor{red}{\textbf{Assumption S: the canonical bundle of }}S 
\end{equation}
$$
\textcolor{red}{\textbf{is either trivial, or satisfies }} c_1(\CK_S) \cdot H < 0
$$
In this case, it is well-known that the space $\CM$ is smooth (Theorem 4.5.4 of \cite{HL}), and so the groups \eqref{eqn:k} are rings endowed with proper push-forwards and pull-backs under lci morphisms between smooth schemes (see \cite{CG}). However, our constructions make sense outside Assumption S: all we need is a $K$-theory defined for all Noetherian schemes $X$ and all \textbf{virtual} zero loci of sections $s$ of locally free sheaves on $X$ (see Subsection \ref{sub:derived bundles}). We interpret such a virtual zero section as the dg subscheme of $X$ whose dg algebra of functions is the Koszul complex of $s^\vee$, in the language of \cite{CK}. Since such dg schemes can be interpreted as derived schemes, the required $K$--theory will be the spectrum of the $\infty$--category of cohomologically bounded coherent sheaves (many thanks to Mauro Porta for pointing this out). This theory has proper push-forwards, as well as pull-backs under either lci maps, or restriction maps from a space to the virtual zero section of a locally free sheaf. \\

\noindent If there is an algebraic torus acting on $S$, for example $\BC^* \times \BC^* \curvearrowright \BP^2$, then we may also consider equivariant $K$--theory groups instead of \eqref{eqn:k}. While strictly speaking we will not follow this avenue, it is natural to expect that one can generalize many of the constructions in the present paper to non-projective surfaces $S$, as long as the torus fixed point set is proper. Examples include the total space of a line bundle over a smooth projective curve with the action of $\BC^*$ by dilating fibers, or the case of $\BC^* \times \BC^*$ scaling $\BA^2$. The latter case was treated in \cite{W} and \cite{Mod}, and the present paper grew out of the attempt to globalize the results of these two papers (note that if $S$ is not proper, one usually has to adapt the definition of the moduli space of stable sheaves, e.g. by working instead with framed sheaves). \\

\noindent An important object in the representation theory of affine quantum groups is the Ding-Iohara-Miki algebra, which is generated over the ring $\BZ[a^{\pm 1}, b^{\pm 1}]$ by the coefficients of bi-infinite series of symbols $e(z)$, $f(z)$, $h^\pm(z)$ satisfying relations \eqref{eqn:dim 1}, \eqref{eqn:dim 2}, \eqref{eqn:dim 3}. The term ``bi-infinite series" refers to formal power series indexed by all $n \in \BZ$, such as the delta function $\delta(z) = \sum_{n \in \BZ} z^n$. The space of such bi-infinite series with coefficients in an abelian group $G$ will be denoted by $G\pa{z}$. We will define operators:
\begin{equation}
\label{eqn:def e intro}
e(z) : K_\CM \longrightarrow K_{\CM' \times S} \pa{z}
\end{equation}
\begin{equation}
\label{eqn:def f intro}
f(z) : K_{\CM'} \longrightarrow K_{\CM \times S} \pa{z}
\end{equation}
given by the formal series of $K$--theory classes $\delta \left(\frac {\CL}z \right)$ on the Hecke correspondence:
$$
\xymatrix{
& \fZ = \{(\CF,\CF',x), \CF/\CF' \cong \BC_x\} \ar[ld]^{\CF} \ar[d]^{\CF'} \ar[rd]_{x} & \\
\CM & \CM' & S} 
$$
where $\CM = \CM' = $ the moduli space \eqref{eqn:m}, and the line bundle $\CL$ on $\fZ$ is has fibers equal to the 1-dimensional quotients $\CF_x/\CF'_x$. The history of the operators \eqref{eqn:def e intro} and \eqref{eqn:def f intro} is long and has generated some very beautiful mathematics, but our approach is closest to the original construction of Nakajima and Grojnowski in cohomology (see \cite{G}, \cite{Nak}), as well as the higher rank generalization by Baranovsky (see \cite{Ba}). We also consider the operators: 
\begin{equation}
\label{eqn:def h intro}
h^\pm(z) : K_{\CM} \longrightarrow K_{\CM \times S} [[z^{\mp 1}]]
\end{equation}
of tensor product with the full exterior power of the universal sheaf times $[\CK_S]^{-1} - 1$ (see \eqref{eqn:def h} for the precise formula). The meaning of the sign $\pm$ is that there are two ways to expand the full exterior power as a function of $z$, either near 0 or near $\infty$, and this gives rise to two power series of operators. Then our main result is: \\

\begin{theorem}
\label{thm:main}

The operators $e(z)$, $f(z)$, $h^\pm(z)$ satisfy the commutation relations \eqref{eqn:comm rel 1 intro}, \eqref{eqn:comm rel 2 intro}, \eqref{eqn:comm rel 3 intro}. When restricted  to the diagonal $S \hookrightarrow S \times S$, the relations match those in the Ding-Iohara-Miki algebra, specifically \eqref{eqn:dim 1}, \eqref{eqn:dim 2}, \eqref{eqn:dim 3} (in which case the parameters $a$ and $b$ are identified with the Chern roots of $\Omega_S^1$). \\

\end{theorem}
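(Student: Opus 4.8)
The plan is to establish the three commutation relations by a geometric analysis of fiber products of Hecke correspondences, following the Nakajima–Grojnowski–Baranovsky strategy but carried out in $K$–theory with the line bundle $\CL$ carefully tracked. First I would set up the composition $e(w) \circ f(z)$ and $f(z) \circ e(w)$ as correspondences supported on the fiber products $\fZ \times_{\CM'} \fZ$ and $\fZ \times_{\CM} \fZ$ respectively; these parametrize flags $\CF \supset \CF' \subset \CF''$ (resp. $\CF \subset \CF' \supset \CF''$) together with two points $x, y \in S$. The key geometric observation is that away from the diagonal $x = y$ the two composites are supported on the \emph{same} space — the space of pairs $(\CF, \CF'')$ that differ from a common sheaf at two distinct points — and one identifies the two $K$–theory classes there directly; the discrepancy is therefore a class supported on the locus $x = y$, which is where the $h^\pm(z)$ terms and the $\delta$–function terms on the right-hand side of \eqref{eqn:comm rel 2 intro} come from. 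To pin down this discrepancy I would compute the excess/virtual normal bundles of the relevant intersections (this is where Proposition \ref{prop:length 1} and the derived zero section formalism of Subsection \ref{sub:derived bundles} are needed, since the moduli spaces are not assumed smooth) and extract the residue at $w = z$ via the standard rank-one computation: the fiber of $\CL$ on one factor is glued to $\CF_x/\CF'_x$ on the other, and when $x = y$ these quotients interact, producing exactly the full-exterior-power expression defining $h^\pm$.

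Next, for the $e$–$e$ and $f$–$f$ relations \eqref{eqn:comm rel 1 intro} I would compare $e(w) \circ e(z)$, which lives on the iterated Hecke space $\{\CF \supset \CF' \supset \CF''\}$ with marked points, against its transpose. This space is irreducible (generically the two modifications happen at distinct points and commute), and the nontrivial content is the behavior along $x = y$: there the fiber product acquires the structure governed by the punctual Hilbert scheme $\mathrm{Hilb}^2$ of length-2 subschemes supported at a point, and the two orderings of the factors differ by the Chern roots of $\Omega^1_S$ acting on the two-dimensional quotient. This produces the rational function in $w/z$ with zeros/poles matching the structure function $g(w/z)$ of the Ding–Iohara–Miki algebra; concretely I would write $e(w)e(z)$ and $e(z)e(w)$ as pushforwards from the same ambient correspondence with the two tautological line bundles $\CL_1, \CL_2$ swapped, and show the defining class is divisible by the appropriate $(w - z)$-type factors with the $\Omega^1_S$–twist. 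The relation \eqref{eqn:comm rel 3 intro} between $h^\pm$ and $e, f$ is the easiest: since $h^\pm$ is multiplication by a fixed K-theory class built from the universal sheaf, it is essentially a matter of comparing $\bigwedge^\bullet \CU^\vee$ on $\CM$ versus $\CM'$ across the Hecke correspondence, which changes by a single rank-one factor, yielding the scalar rational function in the DIM relation.

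The main obstacle I expect is the precise bookkeeping of the \emph{virtual} structure on the intersections when Assumption S fails and $\CM$ need not be smooth. The composites $e f$ and $f e$ are naturally classes on derived fiber products, and identifying the excess bundle of $\fZ \times_{\CM} \fZ \hookrightarrow \fZ \times \fZ$ — together with the interplay with $\CK_S$ coming from Serre duality on the surface in the normal directions — requires care: one must check that all the relevant maps are either lci or restrictions to derived zero sections so that the pullbacks in \eqref{eqn:k}'s $K$–theory are defined, and that the resulting classes are independent of auxiliary choices. Once the geometry of these intersections and their normal complexes is under control, extracting the residue at $w = z$ and matching to $h^\pm(z)$ is a direct computation. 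The final step — restricting to the diagonal $S \hookrightarrow S \times S$ and reading off \eqref{eqn:dim 1}, \eqref{eqn:dim 2}, \eqref{eqn:dim 3} with $a, b$ the Chern roots of $\Omega^1_S$ — is then a matter of specializing the $K$–theoretic Chern class identities, since $[\CK_S] = ab$ and $[\Omega^1_S] = a + b$ in the notation where $a, b$ are the Chern roots.
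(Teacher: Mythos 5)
Your high-level plan — realize the compositions of Hecke correspondences as classes on derived fiber products, compare the two orderings via excision, and read off the discrepancy along the diagonal — does match the paper's overall strategy, but note that you have swapped the labels of the relations: the $e$-$f$ commutator with $\delta$ and $h^\pm$ on the right is \eqref{eqn:comm rel 3 intro} (Proposition \ref{prop:comm rel 3}), while the $e$-$h^\pm$ exchange relation, which you correctly identify as the easy one (a single $\wedge^\bullet$ comparison across the Hecke correspondence), is \eqref{eqn:comm rel 2 intro} (Proposition \ref{prop:comm rel 2}).

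The substantive difference lies in how you propose to compute the diagonal contributions, and here your route diverges from the paper in a way that reintroduces exactly the difficulty you flag in your last paragraph. For the $e$-$e$ relation \eqref{eqn:comm rel 1 intro}, you appeal to the punctual Hilbert scheme and to irreducibility of the iterated Hecke space. The paper instead constructs an explicit intermediate derived scheme $Y$ (the space of quadruples $\CF'' \subset \CF_1', \CF_2' \subset \CF$ in \eqref{eqn:quadruples}, cf.\ diagram \eqref{eqn:nice diagram proof}) and, via Proposition \ref{prop:blow up}, presents the map $Y \rightarrow \fZ_2$ as a derived projectivization; Proposition \ref{prop:general proj} then converts the entire relation into a one-variable residue computation that produces $\zeta^S$ mechanically and is manifestly antisymmetric by the $\CF_1' \leftrightarrow \CF_2'$ symmetry of $Y$. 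The point of this formulation is that one never needs to understand the local geometry of $\CM$ or $\fZ_2$ along the double-point locus $x_1 = x_2$: the derived projective-bundle formalism absorbs the non-transversality automatically. A Nakajima-style local analysis would require control of the derived structure of $\fZ_2$ along that locus when $\CM$ is singular, and it is not clear that this can be done without essentially redoing the work that Propositions \ref{prop:blow up} and \ref{prop:general proj} package for you. Similarly, for the $e$-$f$ relation \eqref{eqn:comm rel 3 intro}, you propose computing excess or virtual normal bundles of the diagonal locus. The paper avoids this entirely: once excision shows that $[e(z), f(w)]$ is a multiplication operator, it suffices to evaluate $[e(z), f(w)] \cdot 1$ on the unit class, and this reduces to the pushforward formulas \eqref{eqn:push taut gen 1}--\eqref{eqn:push taut gen 2} plus a residue in $z/w$. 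Your normal-bundle route should give the same answer, but would demand a separate analysis of $\fZ \times_{\CM} \fZ \hookrightarrow \fZ \times \fZ$ that the paper's ``act on $1$'' trick makes unnecessary. So the proposal is sound in outline, but the two places where you defer the hard work (the residue at $x_1 = x_2$ and the virtual normal bundle) are precisely the places where the paper's projective-bundle technology is doing the heavy lifting, and without it a gap remains.
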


\noindent Therefore, the algebra generated by the operators \eqref{eqn:def e intro}, \eqref{eqn:def f intro}, \eqref{eqn:def h intro} can be interpreted as an ``off the diagonal" version of the Ding-Iohara-Miki algebra. To explain what we mean by this, let us make the simplifying assumption that: 
\begin{equation}
\label{eqn:big kunneth}
K_{\CM \times S \times ... \times S} \cong \km \boxtimes \ks \boxtimes ... \boxtimes \ks
\end{equation}
which holds as soon as the class of the diagonal is decomposable in $\kss$ (see \cite{CG}). In this case, the operators \eqref{eqn:def e intro}--\eqref{eqn:def h intro} can be interpreted as endomorphisms of $\km$ with coefficients in $\ks$, and the composition of two such operators can be thought of as an endomorphism of $\km$ with coefficients in $\kss \cong \ks \boxtimes \ks$. Relations \eqref{eqn:comm rel 1 intro}--\eqref{eqn:comm rel 3 intro} are equalities of endomorphisms of $\km$ with coefficients in $\kss \cong \ks \boxtimes \ks$. When one restricts the coefficients to the diagonal $\Delta^* : \kss \cong \ks \boxtimes \ks \rightarrow \ks$, then the above-mentioned relations match those in the Ding-Iohara-Miki algebra defined over the ring $\ks$ instead of over $\BZ[a^{\pm 1}, b^{\pm 1}]$. \\

\noindent We will perform explicit computations of the operators $e(z)$, $f(z)$, $h^\pm(z)$ under an extra assumption on the surface $S$. Specifically, recall that Assumption A of \eqref{eqn:assumption a} entails the existence of a universal sheaf:
\begin{equation}
\label{eqn:diagram intro}
\xymatrix{
& \CU \ar@{.>}[d] & \\
& \CM \times S \ar[ld]_{p_1} \ar[rd]^{p_2} & \\
\CM & & S}
\end{equation}
whose restriction to any point $\CF \in \CM$ is precisely $\CF$ interpreted as a sheaf on $S$ (the universal sheaf is only determined up to tensoring with line bundles in $p_1^*(\text{Pic}_\CM)$, but this ambiguity is resolved by the fact that we will work with the projectivization of $\CU$). Moreover, let us assume that the diagonal $\Delta : S \hookrightarrow S \times S$ is decomposable, i.e. there exist classes $\{l_i, l^i\} \in \ks$ for some indexing set $i \in I$ such that:
\begin{equation}
\label{eqn:diagonal decomposes}
[\CO_\Delta] = \sum_i l_i \boxtimes l^i \qquad \left(\text{the RHS is shorthand for } \sum_{i \in I} p_1^*l_i \otimes p_2^* l^i \right)
\end{equation}
In this case, one has the Kunneth decomposition \eqref{eqn:big kunneth} (see Theorem 5.6.1 of \cite{CG}), so we may use it to decompose the universal sheaf:
\begin{equation}
\label{eqn:universal decomposes}
[\CU] \ = \ \sum_{i} [\CT_i] \boxtimes l^i \in \km  \boxtimes \ks
\end{equation}
for certain $K$--theory classes $[\CT_i]$ on $\km$. We may add an extra level of concreteness to our computations by assuming that $\CT_i$ are locally free. To summarize, from Subsection \ref{sub:computations} onwards, we impose the following assumption on top of \eqref{eqn:assumption a}:
$$
\textcolor{red}{\textbf{Assumption B: there are decompositions }} \eqref{eqn:diagonal decomposes} \textcolor{red}{\textbf{ and }} \eqref{eqn:universal decomposes},
$$
\begin{equation}
\label{eqn:assumption b}
\textcolor{red}{\textbf{where }}[\CT_i] \textcolor{red}{\textbf{ are classes of locally free sheaves on }}\CM
\end{equation}
A particular example which falls under Assumption \eqref{eqn:assumption b} is $S = \BP^2$, in which case the sheaves $\CT_i$ are constructed via Beilinson's monad (Example \ref{ex:monad plane}). \\

\noindent In Section \ref{sec:shuffle}, we focus on the algebra generated by the operators $e(z)$ and define a universal shuffle algebra that describes it. More specifically, in Definition \ref{def:shuffle} we introduce the following graded algebra:
$$
\CS_\sm \subset \CS_\big = \bigoplus_{k=0}^\infty \BF_k(z_1,...,z_k)^\sym
$$
where $\BF_k$ is the coefficient ring defined in \eqref{eqn:fk}. This ring is endowed with an action of $S(k)$, and the superscript $\sym$ in the right hand side refers to rational functions that are invariant under the simultaneous action of $S(k)$ on $\BF_k$ and the variables $z_1,...,z_k$. The multiplication in $\CS_\big$ is defined in \eqref{eqn:shuffle product} with respect to the rational function \eqref{eqn:zeta}, and $\CS_\sm$ is defined as the subalgebra generated by the rational functions in a single variable $\{z_1^n\}_{n \in \BZ}$. The generators $z_1^n$ satisfy relations \eqref{eqn:comm rel 1 intro}, and because of Theorem \ref{thm:main}, it is natural to ask whether we have an action:
\begin{equation}
\label{eqn:shuffle acts}
\CS_\sm \Big|_{\BF_k \mapsto K_{S \times ... \times S}} \curvearrowright \km \qquad \text{given by} \qquad \delta \left( \frac {z_1}w \right) =  e(w) \curvearrowright \km
\end{equation}
for a smooth projective surface $S$ subject to Assumption A. We prove \eqref{eqn:shuffle acts} under the more restrictive Assumption B (see Corollary \ref{cor:act}). In general, proving that \eqref{eqn:shuffle acts} is well-defined would require one to better understand the ideal of relations between the generators $z_1^n \in \CS_\sm$ (or to connect the shuffle algebra with the $K$--theoretic Hall algebra of Schiffmann and Vasserot, see \cite{SV}). \\

\noindent Note that certain statements in the present paper (for example Proposition \ref{prop:comm rel 1}) hold even when $S$ is replaced by a smooth projective variety of dimension $ > 2$, although care must be taken in defining the appropriate dg scheme structures considered in what follows. However, in the resulting algebra of Hecke correspondences, the multiplicative structure would require replacing \eqref{eqn:zeta} by a more complicated rational function, with which we do not yet know how to deal. \\

\noindent I would like to thank Tom Bridgeland, Kevin Costello, Emanuele Macri, Davesh Maulik, Alexander Minets, Madhav Nori, Andrei Okounkov, Mauro Porta, Claudiu Raicu, Nick Rozenblyum, Francesco Sala, Aaron Silberstein, Richard Thomas and Alexander Tsymbaliuk for their help and many interesting discussions. I gratefully acknowledge the support of NSF grant DMS--1600375. \\

\subsection{}
\label{sub:not}

All schemes used in this paper will be Noetherian, and all sheaves will be coherent. Let us introduce certain notations that will be used throughout, such as:
$$
K_X = \text{Grothendieck group of } \text{Coh}(X) 
$$
for the Grothendieck group of the category of coherent sheaves on a scheme $X$. We will also encounter dg subschemes of $X$, which will all be of the form:
$$
Y \hookrightarrow  X \quad \text{where} \quad Y = \text{Spec}_X \wedge^\bullet \left(V^\vee \stackrel{s^\vee}\longrightarrow \CO_X\right)
$$
for a section $s: \CO_X \rightarrow V$ of a locally free sheaf $V$ on $X$. We call $Y$ as above the virtual zero locus of $s$. If $s$ is regular, then $Y$ is an actual scheme (this will be the case throughout the present paper, if one imposes Assumption S). \\

\noindent Let us now assume that $U$ is a coherent sheaf on $X$ of projective dimension 1, i.e.:
\begin{equation}
\label{eqn:length 1 gen}
0 \rightarrow W \rightarrow V \rightarrow U \rightarrow 0
\end{equation}
for certain locally free sheaves $V,W$ on $X$. In this case, the exterior powers of $U$:
$$
\wedge^\bullet(x \cdot U) = \sum_{i=0}^{\infty} (-x)^i \cdot [\wedge^i U], \qquad \wedge^\bullet( - x \cdot U) = \sum_{i=0}^\infty x^i \cdot [S^i U]
$$
are defined by the formulas:
\begin{equation}
\label{eqn:multiplicative}
\wedge^\bullet(x \cdot U) = \frac {\wedge^\bullet(x \cdot V)}{\wedge^\bullet(x \cdot W)}, \qquad \qquad \wedge^\bullet(- x \cdot U) = \frac {\wedge^\bullet(x \cdot W)}{\wedge^\bullet(x \cdot V)}
\end{equation}
expanded as power series. We may also think of \eqref{eqn:multiplicative} as rational functions in $x$. \\

\noindent We will often abuse notation by denoting $K$--theory classes as $V$ instead of $[V]$, and also writing $1/V$ instead of $[V^\vee]$. Therefore, the reader will often see the notation:
\begin{equation}
\label{eqn:frac notation}
\frac {V'}{V} \quad \text{instead of } \quad [V'] \otimes [V^\vee]
\end{equation}
for locally free sheaves $V,V'$. Note that we have the identity:
\begin{equation}
\label{eqn:dual wedge}
\wedge^\bullet (V^\vee) = \frac {(-1)^{\rk V}}{\det V} \wedge^\bullet(V)
\end{equation}
which also holds if $V$ is a sheaf of projective dimension 1, such as $U$ in \eqref{eqn:length 1 gen}. \\

\subsection{}
\label{sub:convergence}

A lot of our calculus will involve bi-infinite formal series, the standard example being the function $\delta(z) = \sum_{n \in \BZ} z^n \in \BZ\pa{z}$. It has the fundamental property that:
\begin{equation}
\label{eqn:fundamental property}
\delta \left( \frac zw \right) P(z) = \delta \left( \frac zw \right) P(w)
\end{equation}
for any (one-sided) Laurent series $P$. An intuitive way of writing the $\delta$ function is:
$$
\delta(z) = \frac 1{1 - \frac 1z} - \frac 1{1 - \frac 1z}
$$
where the first fraction is expanded in negative powers of $z$ and the second fraction is expanded in positive powers of $z$. We will use similar notation for any rational function $R(z)$:
\begin{equation}
\label{eqn:expand}
R(z) \b = \Big\{ R \text{ expanded around } z = \infty \Big\} - \Big\{ R \text{ expanded around } z = 0 \Big\}
\end{equation}
A way to extract mileage from this notation is to interpret it as a residue computation. Specifically, the coefficient of $z^{-n}$ in the right hand side of \eqref{eqn:expand} equals:
\begin{equation}
\label{eqn:def single}
\i z^n R(z) \quad := \quad \int_{|z| = r_{\text{big}}} z^n R(z) \frac {dz}{2\pi i z} - \int_{|z| = r_{\text{small}}} z^n R(z) \frac {dz}{2\pi i z}
\end{equation}
with $r_{\text{big}}$ and $r_{\text{small}}$ being bigger and smaller, respectively, than the finite (that is, different from $0$ and $\infty$) poles of the rational function $R(z)$. In general, given a bi-infinite formal series $e(z) = \sum_{n \in \BZ} e_n z^{-n}$, we may recover its coefficients as:
\begin{equation}
\label{eqn:coeff series}
e_n = \i z^n e(z)
\end{equation}
We will also encounter the notation $|_{0 - \infty} = - |_{\infty - 0}$ and $\int_{0 - \infty} = - \i$. \\

\section{Geometry of the moduli space of sheaves}
\label{sec:geometry}

\medskip

\subsection{}
\label{sub:universal}

We operate under Assumption A of \eqref{eqn:assumption a} throughout this Section. This means that the smooth projective surface $S$, the ample divisor $H$ and $(r,c_1) \in$ $\BN \times H^2(S,\BZ)$ are such that there exists a universal sheaf $\CU$ on $\CM \times S$, where $\CM$ denotes the moduli space of stable sheaves with the invariants $(r,c_1)$ on $S$.\footnote{Note that this assumption can be dropped if one is willing to think of $\CU$ as a twisted sheaf, i.e. that the universal sheaf exists locally on $\CM$ and the gluing maps between such local universal sheaves are defined up to tensoring with certain local line bundles, see \cite{C}} \\

\noindent Because the universal sheaf $\CU$ is flat over $\CM$, it inherits certain properties from the stable sheaves it parametrizes, such as having projective dimension one (indeed, any semistable sheaf of rank $r>0$ is torsion free, and any torsion free sheaf on a smooth projective surface has projective dimension one, see Example 1.1.16 of \cite{HL}): \\

\begin{proposition}
\label{prop:length 1}

There exists a short exact sequence:
\begin{equation}
\label{eqn:length 1}
0 \rightarrow \CW \rightarrow \CV \rightarrow \CU \rightarrow 0
\end{equation}
with $\CW$ and $\CV$ locally free sheaves on $\CM \times S$. \\

\end{proposition}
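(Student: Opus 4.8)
The plan is to exhibit a surjection $\CV \twoheadrightarrow \CU$ from a locally free sheaf on $\CM \times S$, and then argue that the kernel $\CW$ is automatically locally free using the fact that $\CU$ has projective dimension $1$ on all fibers of $p_1$. The standard device for producing $\CV$ is a twist-and-push construction: since $\CM$ is projective (hence so is $\CM \times S$) and $p_2 : \CM \times S \to S$ is a projective morphism with an $H$-relative ample line bundle, I would fix an ample line bundle $\CO(1)$ on $\CM \times S$ and consider, for $N \gg 0$, the evaluation map
\begin{equation}
\label{eqn:eval surjection plan}
\CV := p_1^* p_{1*}\big(\CU(N)\big) \otimes \CO(-N) \longrightarrow \CU.
\end{equation}
For $N$ large, $\CU(N)$ is $p_1$-globally generated and $R^{>0}p_{1*}(\CU(N)) = 0$; moreover $p_{1*}(\CU(N))$ is locally free on $\CM$ because $\CU$ is flat over $\CM$ and the higher direct images vanish, so $\CV$ is locally free and \eqref{eqn:eval surjection plan} is surjective. (Alternatively one can use $p_2$ instead of $p_1$, or any finite set of ample twists; the essential point is only the existence of \emph{some} locally free $\CV$ mapping onto $\CU$, which is standard on a projective scheme.)

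Next I would set $\CW := \Ker(\CV \to \CU)$ and show it is locally free. The sheaf $\CW$ is certainly coherent, and by construction there is a two-term complex $0 \to \CW \to \CV \to \CU \to 0$, so it suffices to prove $\CW$ is flat over $\CM \times S$, equivalently that $\CU$ has Tor-dimension $\le 1$ as an $\CO_{\CM \times S}$-module; then $\CW$, being coherent and flat on a Noetherian scheme, is locally free. The key input here is the one quoted in the paragraph preceding the Proposition: every stable sheaf $\CF$ on $S$ is torsion free (as $r > 0$), hence has projective dimension exactly $1$ on the smooth surface $S$ (Example 1.1.16 of \cite{HL}). Since $\CU$ is flat over $\CM$, I would localize at a point $(\CF, x) \in \CM \times S$ and use the local criterion for flatness / base change to reduce the computation of $\mathrm{Tor}^{\CO_{\CM \times S}}_\bullet(\CU, \kappa(\CF,x))$ to $\mathrm{Tor}^{\CO_S}_\bullet(\CF, \kappa(x))$, which vanishes in degrees $\ge 2$. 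Concretely: flatness of $\CU$ over $\CM$ gives a local resolution of $\CU$ by $\CM$-flat sheaves, and combined with the fiberwise projective dimension bound this forces $\CW \otimes \kappa(\CF,x)$ to have the expected dimension for all $(\CF,x)$; constancy of this fiber dimension then yields local freeness of $\CW$ (e.g. by the standard "constant fiber dimension + $X$ reduced and irreducible components" argument, or more cleanly by noting a module of finite Tor-dimension $1$ that is flat over a smooth factor is flat, hence locally free).

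The main obstacle, and the only place genuine work is needed, is the passage from "fiberwise projective dimension $1$'' to "global projective dimension $1$ on $\CM \times S$''. Naively, projective dimension is not always detected fiberwise, and one must use flatness of $\CU$ over the $\CM$-factor in an essential way: the point is that $\CM \times S \to \CM$ is smooth of relative dimension $2$ with smooth fibers, $\CU$ is flat over $\CM$, and $\CU$ restricted to each fiber has projective dimension $1$, so a relative version of the Auslander--Buchsbaum formula (or the local criterion for flatness applied to a syzygy) upgrades this to global projective dimension $1$. I would phrase this cleanly by choosing, locally on $\CM \times S$, a finite locally free resolution $P_\bullet \to \CU$ (possible since $\CM \times S$ is smooth, or at least regular of finite dimension), truncating to $0 \to Z \to P_1 \to P_0 \to \CU \to 0$ where $Z = \mathrm{Syz}$, and checking $Z$ is locally free by verifying $Z \otimes \kappa(\CF,x)$ has rank independent of $(\CF,x)$; the fiberwise Tor-vanishing $\mathrm{Tor}^{\CO_S}_{\ge 2}(\CF,\kappa(x)) = 0$, transported through the flat base change over $\CM$, is exactly what makes the rank jump impossible. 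Once $Z$ is locally free one replaces $\CV := P_0$, $\CW := Z$ (after gluing the local constructions, or simply invoking the global construction \eqref{eqn:eval surjection plan} which already produces a global $\CV$), and the short exact sequence \eqref{eqn:length 1} follows.
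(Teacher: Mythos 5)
Your proposal follows the same two-step strategy as the paper: (i) produce $\CV$ by twist-and-push, i.e.\ take $\CV = p_1^* p_{1*}(\CU \otimes p_2^*\CO(mH)) \otimes p_2^*\CO(-mH)$ for $m \gg 0$ with its surjective evaluation map onto $\CU$; and (ii) show $\CW := \mathrm{Ker}(\CV \to \CU)$ is locally free because it is flat over $\CM$ with locally free fibers $\CW|_{\{\CF\}\times S}$, the latter because $\CV|_{\{\CF\}\times S}$ is locally free and $\CF$ has projective dimension $\le 1$ on the smooth surface $S$. The paper closes step (ii) by citing Lemma 2.1.7 of \cite{HL} (a coherent sheaf, flat over the base, whose fibers are locally free, is itself locally free), whose content is exactly what your ``local criterion for flatness'' paragraph reconstructs. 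So the substance of the two arguments coincides.

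One genuine issue: your preferred ``clean'' version of step (ii) --- choosing a finite locally free resolution of $\CU$ on $\CM\times S$, justified by saying $\CM\times S$ is ``smooth, or at least regular of finite dimension'' --- is not available in the generality of the Proposition. It is stated under Assumption A alone; without Assumption S the moduli space $\CM$ can be singular and even non-reduced, so $\CM\times S$ need not be regular and finite locally free resolutions of arbitrary coherent sheaves need not exist. The same caveat applies to the aside about ``constant fiber dimension + reduced and irreducible components.'' The fibered argument you also give (flatness over $\CM$ plus fiberwise local freeness, i.e.\ Lemma 2.1.7 of \cite{HL}) is the one that works without any regularity hypothesis on $\CM$, and it is what the paper uses. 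A small further correction: a torsion-free sheaf on a smooth surface has projective dimension $\le 1$, not exactly $1$ (it is $0$ when locally free).
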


\begin{proof} Consider the projection maps $p_1 : \CM \times S \rightarrow \CM$ and $p_2 : \CM \times S \rightarrow S$. For any large enough $m \in \BN$, we have:
$$
p_{1*} \left(\CU \otimes p_2^*\CO(mH) \right) \text{ is locally free on }\CM 
$$
$$
R^ip_{1*} \left(\CU \otimes p_2^*\CO(mH) \right) = 0 \qquad \quad \ \forall \ i>0
$$
and the natural adjunction map:
\begin{equation}
\label{eqn:def v}
\CV := p_1^* \Big[ p_{1*} \left(\CU \otimes p_2^*\CO(mH) \right) \Big] \otimes p_2^*\CO(-mH) \stackrel{\text{ev}}\longrightarrow \CU
\end{equation}
is surjective (see \cite{HL} for the proof of these statements; they actually hold for any flat family of semistable sheaves). Then we define the short exact sequence \eqref{eqn:length 1} by setting $\CW = \text{Ker}(\text{ev})$. Since the sheaves $\CW,\CV,\CU$ are all flat over $\CM$, then if we restrict to any closed point $\{\CF\} \times S \hookrightarrow \CM \times S$, we obtain a short exact sequence:
$$
0 \rightarrow \CW|_{\{\CF\} \times S} \rightarrow \CV|_{\{\CF\} \times S} = H^0\left(\CF \otimes \CO(mH) \right) \otimes \CO(-mH) \stackrel{\text{ev}}\rightarrow \CF \rightarrow 0
$$
Since $\CV$ is locally free and $\CF$ has projective dimension 1, then $\CW|_{\{\CF\} \times S}$ is locally free for all $\CF$. By Lemma 2.1.7 of \cite{HL}, this implies that $\CW$ is locally free on $\CM \times S$.

\end{proof}

\subsection{}
\label{sub:derived bundles}

For a locally free sheaf $V$ on a Noetherian scheme $X$, we write $\BP_X (V)$ for the Proj construction applied to the sheaf of $\CO_X$-algebras $S^*_{\CO_X} V$. We may extend this notion to a coherent sheaf of projective dimension 1, namely $U = V / W$ where $V$ and $W$ are locally free sheaves on $X$. In this case, we \textbf{define}:
\begin{equation}
\label{eqn:derived proj 1}
\xymatrix{\BP_X(U) \ar[rd]_\pi \ar@{^{(}->}[r]^-\iota
& \BP_{X} \left( V \right) \ar[d]^\rho \\
& X}
\end{equation}
where $\iota$ is the virtual zero locus of the map $s : \rho^*(W) \hookrightarrow \rho^*(V) \stackrel{\text{taut}}\longrightarrow \CO(1)$ on $\BP_X(V)$. In other words $\BP_X(U)$ is \textbf{defined} as the following dg subscheme of $\BP_X(V)$:
$$
\BP_X(U) = \text{Spec}_{\BP_X(V)} \left( ... \stackrel{s^\vee}\longrightarrow \wedge^2 \rho^*(W) \otimes \CO(-2) \stackrel{s^\vee}\longrightarrow \rho^*(W) \otimes \CO(-1) \stackrel{s^\vee}\longrightarrow \CO_X \right)
$$
We will encounter the $K$--theoretic push-forward and pull-back maps associated to the map $\pi$ in \eqref{eqn:derived proj 1}. These will always be computed by factoring $\pi$ into $\iota$ and $\rho$, and we note that these maps admit push-forwards (since $\iota$ is a closed embedding and $\rho$ is a projective bundle) and pull-back maps (since $\iota$ is lci in a dg sense and $\rho$ is smooth). \\

\noindent In the same situation as above, we shall encounter the dg scheme $\BP_X(U^\vee[1])$, where $[1]$ stands for homological shift. By definition, this is the dg subscheme:
\begin{equation}
\label{eqn:derived proj 2}
\xymatrix{\BP_X(U^\vee[1]) \ar[rd]_{\pi'} \ar@{^{(}->}[r]^-{\iota'}
& \BP_{X} \left( W^\vee \right) \ar[d]^{\rho'} \\
& X}
\end{equation}
defined as the virtual zero locus of $s : \CO(-1) \stackrel{\text{taut}}\longrightarrow {\rho'}^*(W) \hookrightarrow {\rho'}^*(V)$ on $\BP_X(W^\vee)$. \\

\subsection{}
\label{sub:locus}

When $\CU$ is the universal sheaf on $\CM \times S$, which is $\ \cong \CV/\CW$ by Proposition \ref{prop:length 1}, we may apply the definitions in the previous Subsection and introduce the following: \\

\begin{definition}
\label{def:tower}

Consider the dg scheme:
\begin{equation}
\label{eqn:tower 1}
\fZ = \BP_{\CM \times S} \left(\CU \right)
\end{equation}

\end{definition}

\bigskip

\noindent At the level of usual (non-dg) schemes, the fiber of $\fZ$ over a point $(\CF, x) \in \CM \times S$ parametrizes surjective maps $\phi : \CF \twoheadrightarrow \BC_x$ up to rescaling. The datum of such a map is equivalent to the datum of a colength 1 subsheaf $\CF' \subset \CF$. As we will show in Proposition \ref{prop:hecke}, the sheaf $\CF'$ is stable if and only if $\CF$ is stable, so we conclude that the dg scheme $\fZ$ is supported on the usual scheme: 
\begin{equation}
\label{eqn:locus}
\bfZ = \Big\{ (\CF,\CF') \text{ s.t. } \CF' \subset \CF \text{ and } \CF/\CF' \cong \BC_x \text{ for some } x \in S \Big\} \subset \CM \times \CM'
\end{equation}
where $\BC_x$ denotes the skyscraper sheaf over the closed point $x$. Recall that we write $\CM = \CM'$ for the moduli space of stable sheaves with all possible $c_2$, but we use different notations to emphasize the fact that $\CF$ and $\CF'$ of \eqref{eqn:locus} lie in different copies $\CM$ and $\CM'$ of this moduli space. We will give a precise definition of the closed subscheme \eqref{eqn:locus} in the Appendix, by showing that it represents the functor $\sZ$ of Subsection \ref{sub:flag}. Because of this, $\bfZ \times S$ admits universal sheaves $\CU$, $\CU'$, together with an inclusion:
\begin{equation}
\label{eqn:incl}
\CU' \hookrightarrow \CU \quad \text{on } \bfZ \times S
\end{equation}
In fact, the quotient of \eqref{eqn:incl} is supported on the graph $\Gamma : \bfZ \hookrightarrow \bfZ \times S$ of the projection $p_S : \bfZ \rightarrow S$, $p_S(\CF, \CF') = x$. Therefore, we have a short exact sequence: 
\begin{equation}
\label{eqn:ses universal}
0 \rightarrow \CU' \rightarrow \CU \rightarrow \Gamma_*(\CL) \rightarrow 0
\end{equation}
of sheaves on $\bfZ \times S$, where the so-called \textbf{tautological line bundle} $\CL$ has fibers:
\begin{equation}
\label{eqn:tautological line}
\xymatrix{
\CL \ar@{.>}[d] \\
\bfZ} \qquad \qquad \CL|_{(\CF,\CF',x)} = \CF_x / \CF'_x
\end{equation}
More rigorously, $\CL$ is defined as the push-forward of $\CU/\CU'$ from $\bfZ \times S$ to $\bfZ$. It is not hard to see that $\CL$ coincides with $\CO(1)$ on the projectivization \eqref{eqn:tower 1}. \\

\subsection{} 

The previous Subsection states that the fiber of the map $\bfZ \rightarrow \CM \times S$ over a closed point $(\CF,x)$ is the projective space $\BP \Hom(\CF,\BC_x)$. We wish to obtain a similar description for the dg scheme $\fZ$. Combining \eqref{eqn:derived proj 1} with \eqref{eqn:tower 1}, it follows that the fiber of $\fZ \rightarrow \CM \times S$ over $(\CF,x)$ coincides with the two step complex:
\begin{equation}
\label{eqn:vanc}
\fZ\Big|_{(\CF,x) \in \CM \times S} = \BP\Big[ \Hom(\CV,\BC_x) \rightarrow \Hom(\CW,\BC_x) \Big]
\end{equation}
However, we have the long exact sequence associated to \eqref{eqn:length 1}:
$$
0 \rightarrow \Hom(\CF, \BC_x) \rightarrow \Hom(\CV,\BC_x) \rightarrow \Hom(\CW,\BC_x) \rightarrow \Ext^1(\CF,\BC_x) \rightarrow 0
$$
since $\Ext^1(\CV,\BC_x) = 0$ for any locally free sheaf $\CV$ on a smooth surface (this is an easy exercise that we leave to the interested reader). Therefore, we conclude that the complex in the right-hand side of \eqref{eqn:vanc} is quasi-isomorphic to $\RHom(\CF,\BC_x)$, which is to say that we have the following quasi-isomorphism of dg vector spaces:
\begin{equation}
\label{eqn:ouver}
\fZ\Big|_{(\CF,x) \in \CM \times S} \cong \BP \RHom(\CF,\BC_x)
\end{equation}
The discussion above and below is given in terms of closed points to keep the notation simple. The interested reader may readily translate it in terms of dg scheme-valued points. We wish to show that \eqref{eqn:ses universal} holds for the dg scheme $\fZ$ as well, but to this end, we need to better understand the complex \eqref{eqn:vanc}. Recall from the proof of Proposition \ref{prop:length 1} that $\CV|_{\{\CF\} \times S} = H^0(\CF(mH)) \otimes \CO(-mH)$ for some large enough natural number $m$. Therefore, a ``point" in the complex \eqref{eqn:vanc} comes from a pair of homomorphisms $\bar{\phi}$ and $\bar{\bar{\phi}}$ that make the following diagram commute:

$$
\xymatrix{ & 0 & 0 & 0 & \\
0 \ar[r] & \CF' \ar[r] \ar[u]  & \CF  \ar[r]^-\phi \ar[u] & \BC_x \ar[u] \ar[r] & 0\\
0 \ar[r] & \sharp \ar[r] \ar[u] & H^0(\CF(mH))(-mH) \ar[r]^-{\bar{\phi}} \ar[u]^{\text{ev}} & \CO(-mH) \ar[u] \ar[r] & 0 \\
0 \ar[r] & \flat \ar[r] \ar[u] & \CW \ar[r]^-{\bar{\bar{\phi}}} \ar[u] & \fm (-mH) \ar[u] \ar[r] & 0 \\
& 0 \ar[u] & 0 \ar[u] & 0 \ar[u] & } 
$$
The three sheaves in the leftmost column are defined as the kernels of $\phi$, $\bar{\phi}$, $\bar{\bar{\phi}}$. Since $\bar{\phi}$ is a constant matrix, the kernel $\sharp$ is a codimension 1 subspace of $H^0(\CF(mH))$, tensored with $\CO(-mH)$. However, \cite{HL} show that $m$ can be chosen large enough so that if $\CF$ is globally generated by $\CO(mH)$, then so is any stable colength 1 subsheaf $\CF'$. This implies that $\sharp$ must be equal to $H^0(\CF'(mH)) \otimes \CO(-mH)$, and the map $\sharp \rightarrow \CF'$ must be equal to the evaluation map. This also implies that $\flat = \CW'$, and so we conclude that a point in the fiber \eqref{eqn:vanc} corresponds to an entire diagram:
\begin{equation}
\label{eqn:huge}
\xymatrix{ & 0 & 0 & 0 & \\
0 \ar[r] & \CF' \ar[r] \ar[u]  & \CF  \ar[r]^-\phi \ar[u] & \BC_x \ar[u] \ar[r] & 0\\
0 \ar[r] & H^0(\CF'(mH))(-mH) \ar[r] \ar[u]^{\text{ev}} & H^0(\CF(mH))(-mH) \ar[r]^-{\bar{\phi}} \ar[u]^{\text{ev}} & \CO(-mH) \ar[u] \ar[r] & 0 \\
0 \ar[r] & \CW' \ar[r] \ar[u]^{\iota'} & \CW \ar[r]^-{\bar{\bar{\phi}}} \ar[u]^\iota & \fm (-mH) \ar[u] \ar[r] & 0 \\
& 0 \ar[u] & 0 \ar[u] & 0 \ar[u] & } 
\end{equation}
This implies that the inclusion $\CU' \hookrightarrow \CU$ holds on the dg scheme $\fZ \times S$, where the map between the complexes $\CU' \cong [\CW'\rightarrow \CV']$ and $\CU \cong [\CW\rightarrow \CV]$ on $\fZ \times S$ is induced by the horizontal arrows in diagram \eqref{eqn:huge}.

\subsection{}
\label{sub:tower 2}

We have the three forgetful maps:
$$
\xymatrix{
& \fZ \ar[ld]_{p_1} \ar[d]^{p_S} \ar[rd]^{p_2} & \\
\CM & S & \CM'} 
$$
given by sending a flag $(\CF,\CF')$ to $\CF$, $\left( \text{supp } \CF/\CF' \right)$ and $\CF'$, respectively. After defining $\fZ$ as the projectivization of a coherent sheaf over $p_1 \times p_S$, we will now prove that it is also the projectivization of a coherent sheaf over $p_2 \times p_S$. However, this time we will use the formalism of \eqref{eqn:derived proj 2} instead of \eqref{eqn:derived proj 1}. \\

\begin{proposition}
\label{prop:tower}

The projection map $\fZ \rightarrow \CM' \times S$ is the projectivization:
\begin{equation}
\label{eqn:tower 2}
\fZ \cong \BP_{\CM' \times S} \left( {\CU'}^\vee \otimes \CK_S [1] \right) 
\end{equation}
We write $\CK_S$ both for the canonical bundle of $S$ and for its pull-back to $\CM' \times S$. The line bundle $\CL$ on $\fZ$ coincides with $\CO(-1)$ in the right-hand side. \\

\end{proposition}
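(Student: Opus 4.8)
The plan is to run the argument that produced \eqref{eqn:derived proj 1} and \eqref{eqn:tower 1}, but with the roles of $\CF$ and $\CF'$ interchanged and with $\RHom$ replaced by its Serre dual. Concretely, the starting point is the identification \eqref{eqn:ouver}: at a closed point $(\CF, \CF', x)$ of $\fZ$, the fiber of $\fZ \to \CM \times S$ is $\BP\RHom(\CF, \CO_x)$, and $\CF'$ is recovered as the colength-$1$ subsheaf determined by the corresponding surjection $\CF \twoheadrightarrow \CO_x$. First I would fit this into a family: applying $\RHom_{p_1}(-, \Gamma_*\CO)$ (relative $\RHom$ along $\fZ \times S \to \fZ$, using the graph $\Gamma$) to the short exact sequence \eqref{eqn:ses universal} on $\fZ \times S$ gives a distinguished triangle relating $\RHom(\CU, \Gamma_*\CO)$, $\RHom(\CU', \Gamma_*\CO)$ and $\RHom(\Gamma_*\CL, \Gamma_*\CO)$, from which one reads off that $\fZ$ sits inside $\BP_{\CM' \times S}$ of a two-term complex built from ${\CU'}^\vee$. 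The point is then to identify this two-term complex, up to the twist and shift in \eqref{eqn:tower 2}, with $[\CW^\vee \to \CV^\vee]$ — i.e. with the complex whose $\BP$ was handled by the formalism \eqref{eqn:derived proj 2} — and this is exactly the content of the dual diagram to \eqref{eqn:huge}.

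The key geometric input is Serre duality on the surface $S$, relative to $\CM'$. At a point $(\CF', x)$, the fiber of $\fZ \to \CM' \times S$ should be $\BP\,\RHom(\CO_x, \CF')[1] \otimes$(something), since the colength-$1$ oversheaves $\CF \supset \CF'$ with $\CF/\CF' \cong \CO_x$ are classified by $\Ext^1(\CO_x, \CF')$ (extensions of $\CO_x$ by $\CF'$), up to scalar. By Serre duality on $S$, $\Ext^1(\CO_x, \CF') \cong \Ext^1(\CF', \CO_x \otimes \CK_S)^\vee$, and more precisely $\RHom(\CO_x, \CF') \cong \RHom(\CF', \CO_x)^\vee[-2] \otimes \CK_S$ as a perfect complex, so that $\RHom(\CO_x,\CF')[1]$ is concentrated in the right degree. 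Combined with \eqref{eqn:ouver} applied to $\CF'$ in place of $\CF$ — namely $\BP\RHom(\CF', \CO_x) \cong \BP[\Hom(\CV', \CO_x) \to \Hom(\CW', \CO_x)]$ from the resolution $0 \to \CW' \to \CV' \to \CU' \to 0$ — this dualizes to the statement that the fiber is $\BP[{\CW'}^\vee \to {\CV'}^\vee] \otimes \CK_S[1]$ evaluated at $x$, i.e. $\BP({\CU'}^\vee \otimes \CK_S[1])$ in the notation of \eqref{eqn:derived proj 2}. Globalizing, one must check that the section cutting out $\fZ$ inside $\BP_{\CM' \times S}({\CW'}^\vee \otimes \CK_S)$ in the formalism \eqref{eqn:derived proj 2} — the composite $\CO(-1) \to {\rho'}^*\CW' \hookrightarrow {\rho'}^*\CV'$ — matches, under the dictionary above, the section cutting $\fZ$ out of $\BP_{\CM \times S}(\CV)$ via \eqref{eqn:derived proj 1}. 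This is a diagram chase through (the dual of) \eqref{eqn:huge}: the maps $\iota': \CW' \hookrightarrow \CW$ and the evaluation maps there dualize to precisely the taut-section data of \eqref{eqn:derived proj 2}.

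For the line bundle claim, $\CL = \CO(1)$ on the presentation \eqref{eqn:tower 1} was noted at the end of Subsection \ref{sub:locus}; under the dual identification the tautological sub-line-bundle of \eqref{eqn:derived proj 2} is the dual of the tautological quotient, so $\CL$ becomes $\CO(-1)$ in \eqref{eqn:tower 2}. Concretely, the fiber of $\CL$ at $(\CF,\CF',x)$ is $\CF_x/\CF'_x$, which is the line spanned by the class of the extension $0 \to \CF' \to \CF \to \CO_x \to 0$ in $\Ext^1(\CO_x,\CF')$; in $\BP({\CU'}^\vee \otimes \CK_S[1])$ this one-dimensional subspace of ${\CU'}^\vee|_x \otimes \CK_S[1]$-homology is by definition $\CO(-1)$, giving the last sentence.

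The step I expect to be the main obstacle is the last part of the second paragraph: upgrading the pointwise Serre-duality identification of fibers to an identification of \emph{derived} subschemes, i.e. checking that the two Koszul-type dg structures — the one in \eqref{eqn:derived proj 1} coming from $s: \rho^*\CW \to \CO(1)$ on $\BP_{\CM\times S}(\CV)$, and the one in \eqref{eqn:derived proj 2} coming from $\CO(-1) \to {\rho'}^*\CV'$ on $\BP_{\CM'\times S}({\CW'}^\vee)$ — genuinely agree, and not merely have the same classical truncation. This requires tracking the relative duality/trace isomorphism with enough care that it is compatible with the flat families $\CU, \CU'$ over $\fZ$, rather than just over closed points; the commuting diagram \eqref{eqn:huge} and its dual are the right bookkeeping device, but one has to be honest that "$\RHom(\CO_x,\CF') \cong \RHom(\CF',\CO_x)^\vee[-2]\otimes\CK_S$" holds as an isomorphism of families of perfect complexes on $\fZ$, which is where the Appendix's functor-of-points description of $\bfZ$ (and hence of $\fZ$) does the real work.
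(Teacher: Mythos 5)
Your plan follows the same two ingredients the paper uses — Serre duality on $S$ relative to $\CM'$, and the resolution $0 \to \CW' \to \CV' \to \CU' \to 0$ — and the overall shape of the argument matches. But the proposal stops short of the actual content of the proof, namely the explicit identification of fiber complexes. The obstacle you flag (upgrading a pointwise statement to a derived-scheme statement) is not where the real work is: the paper itself works only at closed points and explicitly defers the derived language to the reader. The substance is establishing the closed-point correspondence at all. The paper parametrizes the fiber of $\fZ$ (which, recall, is \emph{defined} as a projectivization over $\CM \times S$) by the diagrams \eqref{eqn:huge}, sends each such diagram to its bottom row — an extension class in $\Ext^1(\fm(-mH),\CW')$ — and then runs a chase through the commutative diagram \eqref{eqn:exact sequences} of Ext long exact sequences to identify $[\Ext^1(\fm(-mH),\CW') \to \Ext^1(\fm(-mH),\CV')]$ with $[\Ext^2(\CO_x,\CW') \to \Ext^2(\CO_x,\CV')]$, which is Serre-dual to the two-term complex presenting the target fiber. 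Your proposal asserts instead that the fiber over $(\CF',x)$ is $\BP\,\RHom(\CO_x,\CF')[1]$, and identifies this by duality with $\BP[{\CW'}^\vee \to {\CV'}^\vee]\otimes\CK_S$; but neither the first assertion nor the compatibility of the tautological cut-out sections is established — these are precisely what the chase through \eqref{eqn:huge} and \eqref{eqn:exact sequences} accomplishes, and you gesture at it only as ``a diagram chase through (the dual of) \eqref{eqn:huge}'' without carrying it out.

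A minor technical point: your Serre-duality formula $\RHom(\CO_x,\CF')\cong\RHom(\CF',\CO_x)^\vee[-2]\otimes\CK_S$ places the canonical twist in an awkward spot. The correct form is $\RHom(\CO_x,\CF')\cong\RHom(\CF'\otimes\CK_S^{-1},\CO_x)^\vee[-2]$, which makes transparent both why the twist disappears at a single closed point (since $\CO_x\otimes\CK_S\cong\CO_x$) and why it is nonetheless essential to the family statement — this is exactly the reason the paper keeps $\Hom({\CW'}^\vee\otimes\CK_S,\CO_x)$ in the target of \eqref{eqn:task} rather than collapsing the twist.
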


\begin{proof} To keep the explanation simple, we will prove the Proposition at the level of closed points, and leave the analogous language for arbitrary dg scheme-valued points to the interested reader. As we have seen in the previous Subsection, points of $\fZ$ are in one-to-one correspondence with diagrams \eqref{eqn:huge}. In order to prove Proposition \ref{prop:tower}, we need to show that there is a one-to-one correspondence:
\begin{equation}
\label{eqn:task}
\text{diagrams \eqref{eqn:huge} for given }(\CF',x) \quad \leftrightarrow \qquad \qquad \qquad 
\end{equation}
$$
\qquad \qquad \qquad \qquad \leftrightarrow  \quad \BP \Big[\Hom({\CW'}^\vee \otimes \CK_S, \BC_x) \rightarrow  \Hom({\CV'}^\vee \otimes \CK_S, \BC_x) \Big]
$$
Consider the following commutative diagram, induced from the Ext long exact sequences corresponding to the sequence $0 \rightarrow \fm(-mH) \rightarrow \CO(-mH) \rightarrow \BC_x \rightarrow 0$:
\begin{equation}
\label{eqn:exact sequences}
\xymatrix{ 0 \ar[d] & 0 \ar[d] \\ 
\Ext^1(\CO(-mH), \CW') \ar[r]^\cong \ar[d] & \Ext^1(\CO(-mH), \CV') \ar[d] \\
\Ext^1(\fm(-mH), \CW') \ar[d] \ar[r] & \Ext^1(\fm(-mH), \CV') \ar[d] \\
\Ext^2 (\BC_x, \CW') \ar[d] \ar[r] & \Ext^2 (\BC_x, \CV') \ar[d] \\
\Ext^2 (\CO(-mH), \CW') \ar[d] \ar[r]^\cong & \Ext^2 (\CO(-mH), \CV') \ar[d] \\ 
\Ext^2 (\fm(-mH), \CW') \ar[d] \ar[r]^\cong & \Ext^2 (\fm(-mH), \CV') \ar[d] \\
0 & 0 }
\end{equation}
where $\CV' = H^0(\CF'(mH)) \otimes \CO(-mH)$ and $\CW' = \text{Ker } ( \CV' \stackrel{\text{ev}}\rightarrow \CF')$. The topmost terms are 0 because $\Ext^1(\BC_x,\CE) = 0$ for any locally free sheaf $\CE$, a well-known fact. Meanwhile, the first, fourth and fifth horizontal arrows are isomorphisms because the kernel and cokernel of these maps vanish due to the global generation and cohomology vanishing of $\CF'(mH)$. Therefore, we conclude that the complex:
$$
\Big[ \Ext^1(\fm(-mH), \CW') \rightarrow \Ext^1(\fm(-mH), \CV') \Big]
$$
is quasi-isomorphic to:
$$
\Big[ \Ext^2 (\BC_x, \CW') \rightarrow \Ext^2 (\BC_x, \CV') \Big] \cong \Big[ \Hom (\CW' \otimes \CK_S^{-1}, \BC_x)^\vee \rightarrow \Hom (\CV' \otimes \CK_S^{-1}, \BC_x)^\vee \Big]
$$
(the isomorphism in the latter equation is Serre duality). Therefore, the task \eqref{eqn:task} boils down to constructing a one-to-one correspondence:
\begin{equation}
\label{eqn:task task}
\text{diagrams \eqref{eqn:huge} for given }(\CF',x) \quad \leftrightarrow \qquad \qquad \qquad 
\end{equation}
$$
\qquad \qquad \qquad \qquad \leftrightarrow  \quad \BP \Big[ \Ext^1(\fm(-mH), \CW') \rightarrow \Ext^1(\fm(-mH), \CV') \Big]
$$
The correspondence in the $\rightarrow$ direction is given by assigning to a diagram \eqref{eqn:huge} its bottom-most row, which is an extension $\in \Ext^1(\fm(-mH), \CW')$. The vanishing of this extension when pushed forward via $\CW' \hookrightarrow \CV'$ happens because of the middle row in \eqref{eqn:huge}, which is a short exact sequence of trivial locally free sheaves $\otimes \CO(-mH)$. \\

\noindent As for the correspondence in the $\leftarrow$ direction, we need to show that to any extension in $\Ext^1(\fm(-mH), \CW')$ which vanishes when pushed forward under $\CW' \hookrightarrow \CV'$, we may associate a diagram \eqref{eqn:huge}. In other words, we need to show that one may reconstruct the entire diagram \eqref{eqn:huge} from the solid lines below:
\begin{equation}
\label{eqn:aaa}
\xymatrix{ & 0 &  & 0 & \\
& \CF' \ar[u]  & & \BC_x \ar[u] & \\
0 \ar@{.>}[r] & H^0(\CF'(mH))(-mH) \ar[u] \ar@{.>}[r] & ? \ar@{.>}[r] & \CO(-mH) \ar[u] \ar@{.>}[r] & 0 \\
0 \ar[r] & \CW' \ar[r] \ar[u] & \CW \ar[r] \ar@{-->}[u] & \fm (-mH) \ar[u] \ar[r] & 0 \\
& 0 \ar[u] & & 0 \ar[u] & } 
\end{equation}
and the information that the bottom short exact sequence splits if we push it out under $\CW' \hookrightarrow  H^0(\CF'(mH))(-mH)$. Indeed, this information amounts to the same thing as a split short exact sequence (which we display by dotted arrows in the middle row of \eqref{eqn:aaa}) with $? = $ (\text{vector space}) $\otimes \ \CO(-mH)$, and a vertical map $\CW \dashrightarrow \ ?$ which makes the whole diagram commute. From this datum, we may reconstruct the diagram \eqref{eqn:huge}, which thus allows us to reconstruct the sheaf $\CF := \ ?/\CW$ from $\CF'$. It is obvious that the correspondences $\rightarrow$ and $\leftarrow$ constructed in the present and preceding paragraphs are inverses of each other. 

\end{proof}

\subsection{}
\label{sub:smooth}

Let us now study the particular situation when then moduli space of stable sheaves is smooth. By the well-known Kodaira-Spencer isomorphism, we have:
$$
\Tan_{\CF}(\CM) = \text{Ext}^1(\CF,\CF)
$$
Stable sheaves are simple, i.e. $\Hom(\CF,\CF) = \BC$, and the obstruction to $\CM$ being smooth lies within $\Ext^2(\CF,\CF)$. This group can be computed using Serre duality:
\begin{equation}
\label{eqn:serre}
\Ext^2(\CF,\CF) \cong \Hom(\CF, \CF \otimes \CK_S)^\vee
\end{equation}
Under Assumption S, it is easy to show that the vector space on the right is trivial. Indeed, if $\CK_S \cong \CO_S$, then the fact that stable sheaves are simple implies that the vector space \eqref{eqn:serre} is canonically $\BC$. On the other hand, if $c_1(\CK_S) \cdot H < 0$, then the kernel or cokernel of any non-zero homomorphism $\CF \rightarrow \CF \otimes \CK_S$ would violate the stability of $\CF$, and so the vector space \eqref{eqn:serre} is zero. Since the fact that \eqref{eqn:serre} is 0 or canonically $\BC$ implies that $\CM$ is smooth (\cite{HL}, Theorem 4.5.4), then Assumption S implies that $\CM$ is smooth. Moreover:
\begin{equation}
\label{eqn:dimension 0}
\dim_{\CF}\CM = 1 - \chi(\CF,\CF) + \e
\end{equation}
where $\e$ is 1 or 0, depending on which of the two conditions of Assumption S holds (the summands $1$ and $\e$ in \eqref{eqn:dimension 0} are the dimensions of the vector spaces $\Hom(\CF,\CF)$ and $\Ext^2(\CF,\CF)$, as described in the previous paragraph). The Euler characteristic can be computed using the Hirzebruch-Riemann-Roch theorem, and one obtains:
\begin{equation}
\label{eqn:dimension}
\dim \CM_{(r,c_1,c_2)} = 1 + \e + \text{const} + 2r c_2
\end{equation}
where the constant in \eqref{eqn:dimension} depends only on $S,H,r,c_1$. Together with the fact that $\fZ = \BP_{\CM \times S} (\CU)$ for the rank $r$ universal sheaf $\CU$, this implies that:
\begin{equation}
\label{eqn:expected dimension}
\dim \Big\{ \fZ \cap \left( \CM_{(r,c_1,c_2)} \times \CM_{(r,c_1,c_2+1)} \right) \Big\} = 1 + \e + \text{const} + 2r c_2 + r + 1
\end{equation}
By definition, the number above is the ``expected dimension" of the scheme $\bfZ$ on which the dg scheme $\fZ$ is supported, but in general this need not equal to the actual dimension of $\bfZ$. However, have the following result. \\

\begin{proposition}
\label{prop:smooth}

Under Assumption S, the scheme $\bfZ$ is smooth of expected dimension \eqref{eqn:expected dimension}, and it coincides with the dg scheme $\fZ$. Moreover, the map:
$$
p_S : \fZ \longrightarrow S, \qquad (\CF' \subset \CF) \mapsto \emph{supp }\CF/\CF'
$$
is a smooth morphism. \\

\end{proposition}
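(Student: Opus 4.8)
The plan is to compute the Zariski tangent space of $\bfZ$ at each of its points \emph{exactly}, and to read off everything from the derived projective bundle presentation \eqref{eqn:tower 1}.

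First I would observe that, by \eqref{eqn:derived proj 1}, $\fZ$ is the derived zero locus of the section $s:\rho^*\CW\to\CO(1)$ inside the smooth variety $\BP_{\CM\times S}(\CV)$ (smooth because $\CM\times S$ is smooth under Assumption S and $\rho$ is a projective bundle). Consequently every irreducible component of the classical scheme $\bfZ$ has dimension at least $\dim\BP_{\CM\times S}(\CV)-\mathrm{rk}\,\CW$, which a direct computation identifies with \eqref{eqn:expected dimension}; moreover, as soon as $\bfZ$ attains this dimension, $s$ is a regular section, so its Koszul complex resolves $\CO_{\bfZ}$, which gives $\fZ=\bfZ$ with no extra derived structure and exhibits $\bfZ$ as a local complete intersection of the expected dimension. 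Hence it is enough to prove that $\dim T_\zeta\bfZ$ equals \eqref{eqn:expected dimension} for every closed point $\zeta=(\CF'\subset\CF,x)\in\bfZ$.

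Next I would compute $T_\zeta\bfZ$ through the deformation theory of the exact sequence $0\to\CF'\to\CF\to\CO_x\to0$. A double-dual argument using stability shows $\Hom_S(\CF',\CF)=\BC$ (the scalar multiples of the inclusion) and $\Hom_S(\CF,\CF')=0$, from which a short argument gives $T_\zeta\bfZ\cong\ker\big(\Ext^1_S(\CF,\CF)\oplus\Ext^1_S(\CF',\CF')\to\Ext^1_S(\CF',\CF)\big)$, the arrow being restriction minus pushforward along the inclusion. I would then evaluate the relevant dimensions: $\dim\Ext^1_S(\CF,\CF)=\dim\CM$ and $\dim\Ext^1_S(\CF',\CF')=\dim\CM'=\dim\CM+2r$, which is exactly the smoothness of $\CM$ and $\CM'$, i.e. Assumption S applied to both; $\Ext^2_S(\CF',\CF)\cong\Hom_S(\CF,\CF'\otimes\CK_S)^\vee=0$ by Serre duality and stability (when $\CK_S\cong\CO_S$ this is $\Hom_S(\CF,\CF')^\vee=0$, and when $c_1(\CK_S)\cdot c_1(H)<0$ the sheaf $\CF'\otimes\CK_S$ is stable of strictly smaller slope), so that $\dim\Ext^1_S(\CF',\CF)=\dim\CM-\e+r$; and finally the cokernel of the displayed map has dimension $1-\e$, which I would extract by chasing the long exact sequences obtained by applying $\RHom_S(\CF,-)$, $\RHom_S(-,\CF)$, $\RHom_S(\CF',-)$ and $\RHom_S(-,\CO_x)$ to the sequence, using that $\Ext^2_S(\CF,\CO_x)=0$ (since $\CF$ is torsion free, hence of projective dimension $\le1$ on the surface $S$), that $\Ext^2_S(\CF,\CF')\to\Ext^2_S(\CF,\CF)$ is injective (it is Serre dual to the isomorphism $\Hom_S(\CF,\CF)\xrightarrow{\sim}\Hom_S(\CF',\CF)$), and that $\dim\Ext^2_S(\CF,\CF)=\dim\Ext^2_S(\CF',\CF')=\e$. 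Adding up gives $\dim T_\zeta\bfZ=\dim\CM+r+\e+(1-\e)=\dim\CM+r+1$, which is precisely \eqref{eqn:expected dimension}; this proves $\bfZ$ smooth of the expected dimension and $\fZ=\bfZ$.

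Finally, for the smoothness of $p_S$ I would run the same computation for the scheme-theoretic fibre $\fZ_x=p_S^{-1}(x)$, whose points are the flags with the point $x$ held fixed: here $T_\zeta\fZ_x$ is an extension of $\ker\big(\Ext^1_S(\CF,\CF)\xrightarrow{q_*}\Ext^1_S(\CF,\CO_x)\big)$ by $\Hom_S(\CF,\CO_x)/\BC$, where $q:\CF\to\CO_x$, and since $q_*$ is surjective (its cokernel injects into $\ker(\Ext^2_S(\CF,\CF')\to\Ext^2_S(\CF,\CF))=0$) one finds $\dim T_\zeta\fZ_x=\dim\CM+r-1=\dim\fZ-\dim S$. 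Combining this with the fibre-dimension inequality shows $\fZ_x$ is smooth of dimension $\dim\fZ-\dim S$ at every point, so $p_S$ is flat (miracle flatness, $\fZ$ being Cohen--Macaulay and $S$ regular) with smooth fibres, hence smooth. The only subtlety in the whole argument is the careful bookkeeping in the long exact sequences of the third paragraph; the conclusion is forced by Assumption S — through the equalities $\dim\Ext^1_S(\CF,\CF)=\dim\CM$, $\dim\Ext^1_S(\CF',\CF')=\dim\CM'$ and the control of the $\Ext^2$'s — and the computation goes through uniformly in the two cases of Assumption S.
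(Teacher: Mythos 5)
Your approach to the first half of the Proposition (smoothness of $\bfZ$ and $\bfZ = \fZ$) is the same as the paper's: reduce to showing the Zariski tangent spaces have the expected dimension, identify
$$T_\zeta\bfZ \cong \ker\big(\Ext^1(\CF,\CF)\oplus\Ext^1(\CF',\CF')\xrightarrow{\ \sigma\ }\Ext^1(\CF',\CF)\big),$$
compute the four relevant $\Ext^1$ dimensions, and show $\dim\coker\sigma = 1-\e$. Your dimension bookkeeping is correct and matches the paper's equations \eqref{eqn:est 1}--\eqref{eqn:est 4}. However, your outline of the cokernel chase is missing a genuinely non-formal input that the paper supplies and that your listed ingredients do not: the vanishing of the composite $\Ext^1(\CF',\CF)\to\Ext^2(\CO_x,\CF)\to\Ext^2(\CO_x,\CO_x)$ (the map $p_2\circ\tau_1$ in the paper's big diagram). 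The paper proves this by Serre duality and an explicit verification that the generator of $\Hom(\CO_x,\CO_x)$ maps to a split extension $0\to\CF'\to\ker(\CF\oplus\CF\to\CO_x)\to\CF\to 0$ in $\Ext^1(\CF,\CF')$. Your three stated facts ($\Ext^2(\CF,\CO_x)=0$, injectivity of $\Ext^2(\CF,\CF')\to\Ext^2(\CF,\CF)$, and $\dim\Ext^2(\CF,\CF) = \dim\Ext^2(\CF',\CF')=\e$) reproduce the other key ingredient of the paper's diagram chase (they give the isomorphism $\Ext^2(\CF,\CF')\xrightarrow{\sim}\Ext^2(\CF,\CF)$) but they do not give $p_2\circ\tau_1 = 0$ on their own; you would need to supply the split-extension argument, or something equivalent, to close the chase in the $\e=1$ case.

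For the smoothness of $p_S$ you take a genuinely different route, and it is arguably cleaner than the paper's. The paper proves surjectivity of the differential $p_{S*}$ directly: it shows $\ker p_{S*} = \Im(\rho_1,\rho_2)$, establishes Claim \ref{claim:luna} (injectivity of $(\rho_1,\rho_2)$) via a reflexive-hull argument with $\CV = \CF^{\vee\vee}$, concludes $\ker p_{S*}\cong\Ext^1(\CF,\CF')$, and finishes by a dimension count. You instead compute the Zariski tangent space of the scheme-theoretic fibre $\fZ_x$ directly: $T_\zeta\fZ_x$ is an extension of $\ker(q_*:\Ext^1(\CF,\CF)\to\Ext^1(\CF,\CO_x))$ by $\Hom(\CF,\CO_x)/\BC$, the surjectivity of $q_*$ follows from the injectivity of $\Ext^2(\CF,\CF')\to\Ext^2(\CF,\CF)$ (a soft Serre duality argument), and then miracle flatness plus smooth fibres gives smoothness of $p_S$. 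What your version buys is that the only Serre-duality input you need is the injectivity of $\Ext^2(\CF,\CF')\to\Ext^2(\CF,\CF)$, which you already have from the first half, so you avoid the separate reflexive-hull Claim entirely; the cost is that you have to invoke miracle flatness and the fibre-dimension inequality, whereas the paper simply checks surjectivity of the differential between two smooth equidimensional varieties. Both are correct. The only thing to make explicit in your argument is that the lower bound on fibre dimension applies at every point because you have already established that $\bfZ$ is smooth and equidimensional of the expected dimension.

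Finally, one small imprecision: you phrase the injectivity of $\Ext^2(\CF,\CF')\to\Ext^2(\CF,\CF)$ as ``Serre dual to the isomorphism $\Hom(\CF,\CF)\xrightarrow{\sim}\Hom(\CF',\CF)$,'' but that statement only applies verbatim when $\CK_S\cong\CO_S$; in the case $c_1(\CK_S)\cdot c_1(H)<0$ the Serre-dual map is $\Hom(\CF,\CF\otimes\CK_S)\to\Hom(\CF',\CF\otimes\CK_S)$, and both spaces vanish by stability. You clearly understand this (you make the same case distinction elsewhere), but the phrasing as written is only half the argument.
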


\begin{proof} Recall that $\bfZ$ is the set-theoretic zero locus of a section $s$ of a locally free sheaf on a smooth space (since $\CM$ is smooth, so are projective bundles over it), and $\fZ$ is the virtual zero locus of $s$. To show that $\fZ \cong \bfZ$, we must show that the section is regular, and to do so it suffices to show that $\bfZ$ has expected dimension. In fact, we will even show that the dimensions of the tangent spaces of $\bfZ$ are equal to the expected dimension \eqref{eqn:expected dimension}, which will also prove the smoothness of $\bfZ \cong \fZ$. By a general argument pertaining to moduli spaces of flags of sheaves, the space:
$$
\Tan_{(\CF' \subset \CF)} \bfZ \subset \Tan_{(\CF, \CF')} \left( \CM \times \CM' \right) = \Ext^1(\CF,\CF) \oplus \Ext^1(\CF',\CF')
$$
consists of pairs of extensions which are compatible under the inclusion $\CF' \subset \CF$:
\begin{equation}
\label{eqn:2 tan}
\xymatrix{0 \ar[r] & \CF \ar[r]  & \CS  \ar[r] & \CF \ar[r] & 0\\
0 \ar[r] & \CF' \ar[r] \ar@{^{(}->}[u] & \CS' \ar[r] \ar@{^{(}->}[u] & \CF' \ar@{^{(}->}[u] \ar[r] & 0}
\end{equation}
A simple diagram chase shows that such pairs of extensions are precisely those which map to the same extension in $\Ext^1(\CF',\CF)$, and so we conclude that:
\begin{equation}
\label{eqn:sigma}
\Tan_{(\CF' \subset \CF)} \bfZ = \Ker \Big[ \Ext^1(\CF,\CF) \oplus \Ext^1(\CF',\CF') \stackrel{\sigma}\longrightarrow \Ext^1(\CF',\CF) \Big] 
\end{equation}
where the map $\sigma$ is the difference of the two natural maps  induced by $\CF' \subset \CF$. The Hirzebruch-Riemann-Roch theorem implies the following equalities:
\begin{align}
\dim \ \Ext^1(\CF,\CF) &=  1 + \e + 2rc_2 + \text{const} \label{eqn:est 1} \\
\dim \ \Ext^1(\CF',\CF') &= 1 + \e + 2rc_2 + 2r + \text{const} \label{eqn:est 2} \\
\dim \ \Ext^1(\CF,\CF')  &= 0 + \e + 2rc_2 + r + \text{const} \label{eqn:est 3} \\
\dim \ \Ext^1(\CF',\CF) &= 1 + 0 + 2rc_2 + r + \text{const} \label{eqn:est 4}
\end{align}
where const only depends on $S,H,r,c_1$, and the first two summands in each term in the right-hand side come from the dimensions of $\Hom$ and $\Ext^2$ (the number $\e$ is 1 or 0, depending on which of the two conditions of Assumption S holds). Because of these dimension estimates and \eqref{eqn:sigma}, we conclude that the tangent spaces to $\bfZ$ have dimension \eqref{eqn:expected dimension} if and only if the map $\sigma$ of \eqref{eqn:sigma} has cokernel of dimension $1 - \e$. To analyze this cokernel, recall that $\sigma$ is the difference of the maps $\sigma_1$ and $\sigma_2$ in the following commutative diagram with all rows and columns exact:
$$
\xymatrix{ & \Hom(\CF,\BC_x) \ar[d]_-{\lambda_2} & & & \\
\Ext^1(\BC_x,\CF') \ar[r]^{\lambda_1} & \Ext^1(\CF,\CF') \ar[r]^{\rho_1} \ar[d]_{\rho_2} & \Ext^1(\CF',\CF') \ar[d]_{\sigma_2} \ar[r]^\mu & \Ext^2(\BC_x, \CF') \ar[d]_\zeta \ar@{->>}[r] & \BC^\e \ar[d]_\cong \\
& \Ext^1(\CF, \CF) \ar[r]^{\sigma_1} & \Ext^1(\CF',\CF) \ar[r]^{\tau_1} \ar[d]_{\tau_2} & \Ext^2(\BC_x,\CF) \ar@{->>}[r] \ar[d]_{p_2} & \BC^\e \\
& & \Ext^1(\CF',\BC_x) \ar[r]^{p_1} & \Ext^2(\BC_x,\BC_x) & &}
$$
where $\BC_x = \CF/\CF'$. The vector spaces $\BC^\e$ in the right hand side of the diagram are $\Ext^2(\CF,\CF')$ and $\Ext^2(\CF,\CF)$, respectively, and the map between them is the Serre dual of the isomorphism $\Hom(\CF,\CF) \rightarrow \Hom(\CF', \CF)$. We will refer to the display above as the \textbf{big diagram}, and use the notations therein for the remainder of this proof. Then we have: \\

\begin{itemize}[leftmargin=*]

\item If $\CK_S \cong \CO_S$, then $\e = 1$ and we must show that $\sigma$ is surjective. To this end, we claim that $p_2 \circ \tau_1 = 0$, because the Serre dual of this map is:
$$
\Hom(\BC_x, \BC_x) \rightarrow \Hom(\CF,\BC_x) \rightarrow \Ext^1(\CF,\CF')
$$
and the generator of the vector space $\Hom(\BC_x,\BC_x)$ goes to the extension:
$$
0 \longrightarrow \CF' \stackrel{\text{incl}_1}\longrightarrow \text{Ker} (\CF \oplus \CF \longrightarrow \BC_x) \stackrel{\text{pr}_2}\longrightarrow \CF \longrightarrow 0 
$$
where $\text{incl}_1$ is inclusion into the first factor and $\text{pr}_2$ is projection onto the second factor. The above extension is split by the map $\CF \rightarrow \text{Ker} (\CF \oplus \CF \rightarrow \BC_x)$ given by the formula $f \mapsto (f,-f)$ for all local sections $f$ of $\CF$. Therefore, $p_2 \circ \tau_1 = 0$, which means that for any $c \in \Ext^1(\CF',\CF)$, there exists $d$ such that $\tau_1(c) = \zeta(d)$. However, $d$ must be in the image of $\mu$, because of the fact that the (top, right)--most vertical arrow is an isomorphism. Therefore, $d = \mu(e_2)$ for some $e_2$ and thus $\tau_1(c) = \tau_1(\sigma_2(e_2))$. Therefore $\exists \ e_1$ such that $c = \sigma_1(e_1) + \sigma_2(e_2)$. \\

\item If $c_1(\CK_S) \cdot H < 0$, then $\e = 0$ and we must show that the map $\sigma$ of \eqref{eqn:sigma} has 1-dimensional cokernel. Since $\Ext^2(\BC_x, \BC_x) \cong \BC$, it is enough to show that any $c \in \Ker(p_2 \circ \tau_1)$ lies in $\text{Im } \sigma_1 + \text{Im } \sigma_2$. This is done by repeating the argument in the previous bullet after the words ``for any $c$". \\ 

\end{itemize}

\noindent Since $\fZ$ and $S$ are smooth equidimensional varieties over a field, in order to show that the map $\fZ \rightarrow S$ is a smooth morphism, it suffices to show that the differential:
$$
p_{S*} : \Tan_{(\CF' \subset \CF)}\fZ \longrightarrow \Tan_x S
$$
is surjective. If one interprets tangent vectors to $\fZ$ as diagrams \eqref{eqn:2 tan}, then $p_{S*}$ applied to such a vector is given by the short exact sequence:
$$
\Big\{ 0 \rightarrow \CF/\CF' \rightarrow \CS/\CS' \rightarrow \CF/\CF' \rightarrow 0 \Big\} \in \Ext^1(\BC_x,\BC_x) = \Tan_x S
$$
Therefore, a tangent vector \eqref{eqn:2 tan} is in the kernel of $p_{S*}$ if and only if $\CS/\CS' \cong \BC_x \oplus \BC_x$, and this happens if and only if there exists a subsheaf $\CS' \subset \CT \subset \CS$ which fits as an intermediate step in \eqref{eqn:2 tan}, see below:
$$
\xymatrix{0 \ar[r] & \CF \ar[r]  & \CS  \ar[r] & \CF \ar[r] & 0 \\
0 \ar[r] & \CF' \ar[r] \ar@{^{(}->}[u] & \CT \ar@{^{(}->}[u] \ar[r] & \CF \ar[r] \ar@{=}[u] & 0 \\
0 \ar[r] & \CF' \ar[r] \ar@{=}[u]  & \CS' \ar[r] \ar@{^{(}->}[u] & \CF' \ar@{^{(}->}[u] \ar[r] & 0}
$$
This is precisely saying that the two extensions in diagram \eqref{eqn:2 tan} come from the maps $(\rho_1,\rho_2)$ applied to the extension $0 \rightarrow \CF' \rightarrow \CT \rightarrow \CF \rightarrow 0$, and so we conclude:
\begin{equation}
\label{eqn:ker}
\text{Ker } p_{S*} = \text{Im }( \rho_1, \rho_2)
\end{equation}
where we think of $(\rho_1,\rho_2)$ as a map $\Ext^1(\CF,\CF') \rightarrow \Ext^1(\CF,\CF) \oplus \Ext^1(\CF',\CF')$. \\

\begin{claim}
\label{claim:luna}

The map $(\rho_1, \rho_2)$ is injective, and thus together with \eqref{eqn:ker}, we have:
\begin{equation}
\label{eqn:ker 2}
\emph{Ker } p_{S*} \cong \emph{Ext}^1(\CF,\CF')
\end{equation}

\end{claim}

\bigskip

\noindent Together with the dimension computations in \eqref{eqn:expected dimension} and \eqref{eqn:est 3}, the above Claim implies that the dimension of $\Tan_{(\CF'\subset \CF)} \fZ / \text{Ker }p_{S*}$ is 2. Therefore, the map:
$$
\frac {\Tan_{(\CF'\subset \CF)} \fZ}{\text{Ker }p_{S*}} \stackrel{p_{S*}}\longrightarrow \Tan_x S
$$
is an injection of vector spaces of dimension 2, and therefore surjective. \\

\noindent Let us now prove Claim \ref{claim:luna}. With the notation as in the big diagram, it is enough to show that the subspaces $\text{Im }\lambda_1$ and $\text{Im } \lambda_2$ have trivial intersection in $\Ext^1(\CF,\CF')$. To do so, let us consider the reflexive hull:
\begin{equation}
\label{eqn:sha}
0 \longrightarrow \CF \longrightarrow \CV \longrightarrow \CV/\CF \longrightarrow 0
\end{equation}
where $\CV$ is locally free and $\CV/\CF$ has finite length. We similarly have a short exact sequence $0 \rightarrow \CF' \rightarrow \CV \rightarrow \CV/\CF' \rightarrow 0$, and the associated long exact sequence is:
$$
... \rightarrow \Hom(\BC_x,\CV) \rightarrow \Hom(\BC_x, \CV/\CF') \rightarrow \Ext^1(\BC_x, \CF') \rightarrow \Ext^1(\BC_x, \CV) \rightarrow ...
$$
Because $\CV$ is locally free, the vector spaces at the endpoints of the above sequence are 0, and so we have an isomorphism $\Hom(\BC_x, \CV/\CF') \cong \Ext^1(\BC_x,\CF')$. Moreover, this isomorphism fits into the following commutative diagram, where all four maps are induced from various Ext long exact sequences:
$$
\xymatrix{\Ext^1(\BC_x,\CF') \ar[r]^{\lambda_1} & \Ext^1(\CF,\CF') \\
\Hom(\BC_x,\CV/\CF') \ar[r]^\omega \ar[u]^\cong & \Hom(\CF,\CV/\CF') \ar[u]_\lambda}
$$
Moreover, the map $\lambda_2 : \Hom(\CF,\BC_x) \rightarrow  \Ext^1(\CF, \CF')$ factors as $\lambda \circ \psi$, according to the map of sheaves $\BC_x \cong \CF/\CF' \hookrightarrow \CV/\CF'$ and the induced arrows below:
$$
\xymatrix{\Hom(\BC_x,\CV/\CF') \ar[r]^\omega & \Hom(\CF,\CV/\CF') \\
 & \Hom(\CF,\BC_x) \ar[u]_\psi}
$$
Homomorphisms in $\text{Im }\psi$ have length 1 image, namely $\BC_x \cong \CF/\CF' \hookrightarrow \CV/\CF'$. Homomorphisms in $\text{Im }\omega$ annihilate the colength 1 sheaf $\CF' \subset \CF$. Therefore, the intersection $\text{Im }\psi \cap \text{Im } \omega$ is one-dimensional, spanned by the homomorphism $\CF \twoheadrightarrow \CF/\CF' \hookrightarrow \CV/\CF'$. Since this homomorphism goes to 0 under $\lambda$, then $\text{Im }\lambda_1$ and $\text{Im }\lambda_2$ have 0 intersection in $\text{Ext}^1(\CF,\CF')$, thus proving the claim. 

\end{proof}

\section{$K$-theory of the moduli space of sheaves}
\label{sub:k}

\medskip

\subsection{} 
\label{sub:corr}

Consider the projection maps from $\fZ$ of \eqref{eqn:tower 1} to the moduli spaces of sheaves:
\begin{equation}
\label{eqn:projection maps}
\xymatrix{
& \fZ \ar[ld]_{p_1} \ar[d]^{p_S} \ar[rd]^{p_2} & \\
\CM & S & \CM'} 
\end{equation}
and we will also write $p_{1S}$ and $p_{2S}$ for the projections from $\fZ$ to $\CM \times S$ and $\CM' \times S$, respectively. This allows us to define the following operators:
\begin{equation}
\label{eqn:def e}
\km \stackrel{e(z)}\longrightarrow \kmms \pa{z}, \qquad \qquad \qquad e(z) =  p_{2S*}\left( \delta \left(\frac {\CL}z \right) \cdot p_1^* \right)
\end{equation}
\begin{equation}
\label{eqn:def f}
K_{\CM'} \stackrel{f(z)}\longrightarrow \kms \pa{z}, \ \qquad f(z) = \frac {\det \CU}{(-z)^r q^{r-1}} \cdot  p_{1S*}\left( \delta \left(\frac {\CL}z \right) \cdot p_2^* \right)
\end{equation}
where $\delta(z) = \sum_{n \in \BZ} z^n$ denotes the delta function as a formal series. Therefore, \eqref{eqn:def e} and \eqref{eqn:def f} are formal power series of operators, which encode all powers of the tautological line bundle $\CL$ viewed as correspondences between $\CM$, $\CM'$ and $S$. \\

\begin{proposition}
\label{prop:comm rel 1}

We have the following identity of operators $\km \rightarrow K_{\CM \times S \times S}$:
\begin{equation}
\label{eqn:comm rel 1}
\zeta^S \left( \frac wz \right) e(z) e(w) = \zeta^S \left( \frac zw \right)  e(w) e(z) 
\end{equation}
where the \textbf{zeta function} associated to the surface $S$ is defined as:
\begin{equation}
\label{eqn:def zeta s}
\zeta^S (x) = \wedge^\bullet \left( - x \cdot \CO_\Delta \right) \ \in \ \kss (x) 
\end{equation}
If one replaces $e \leftrightarrow f$, then \eqref{eqn:comm rel 1} holds with the opposite product. \\

\end{proposition}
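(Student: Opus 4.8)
The plan is to compute both sides of \eqref{eqn:comm rel 1} explicitly as push-forwards from a fiber product of two copies of the Hecke correspondence $\fZ$, and to show that they differ only by the factor $\zeta^S(w/z)$ versus $\zeta^S(z/w)$. First I would form the iterated correspondence
$$
\xymatrix{
& \fZ_2 \ar[ld]_{q_1} \ar[rd]^{q_2} & \\
\CM & & \CM''}
$$
parametrizing flags $\CF \supset \CF' \supset \CF''$ with both successive quotients skyscrapers, say $\CF/\CF' \cong \CO_x$ and $\CF'/\CF'' \cong \CO_y$. Equivalently, $\fZ_2$ is the fiber product $\fZ \times_{\CM'} \fZ$, and it carries two tautological line bundles $\CL_1, \CL_2$ (with fibers $\CF_x/\CF'_x$ and $\CF'_y/\CF''_y$) together with the two point maps to $S$. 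The composition $e(w)\circ e(z)$ (resp. $e(z)\circ e(w)$), as an operator $\km \to K_{\CM''\times S\times S}\pa{z}\pa{w}$, is then expressed via projection formula and base change as a single push-forward from $\fZ_2$ of the class $\delta(\CL_1/z)\,\delta(\CL_2/w)$ (resp. with $z\leftrightarrow w$), up to keeping careful track of which copy of $S$ receives which point map. The key geometric input is that $\fZ_2$ itself is again a derived projectivization, so these push-forwards are well-defined in the $K$-theory of the earlier subsections, and that the map $\fZ_2 \to \CM''\times S\times S$ (or to $\CM\times S\times S$) has a controllable derived structure.

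The heart of the matter is the comparison of the two orderings. Both $e(w)e(z)$ and $e(z)e(w)$ are push-forwards from $\fZ_2$, but the natural scheme structures coming from the two iterations need not be literally the same derived scheme: composing $e(z)$ then $e(w)$ builds $\fZ_2$ as $\BP(\CU)$ over $\fZ$ first and then $\BP$ of the new universal sheaf, whereas the other order reverses the roles. The standard trick (as in Nakajima's and Baranovsky's arguments, and in the $\BA^2$ papers \cite{W}, \cite{Mod}) is that both are supported on the \emph{same} underlying scheme $\bfZ_2$ of pairs of nested flags, and the discrepancy between the two derived structures — equivalently, between the two excess/obstruction bundles — is measured by a Koszul-type factor. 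Concretely, restricting the universal inclusions to the graphs of the two point maps and using the short exact sequence \eqref{eqn:ses universal} twice, one finds that $e(w)e(z)$ equals $e(z)e(w)$ multiplied by $\wedge^\bullet$ of (a virtual class built from) $\CO_\Delta$ evaluated at the ratio of the two tautological line bundles. Because the tautological line bundles are forced to equal $z$ and $w$ on the respective $\delta$-supports by \eqref{eqn:fundamental property}, this ratio becomes $w/z$ on one side and $z/w$ on the other, producing exactly $\zeta^S(w/z)$ and $\zeta^S(z/w)$ as in \eqref{eqn:def zeta s}. The $e\leftrightarrow f$ statement follows by the same computation run on the mirror description of $\fZ$ as $\BP_{\CM'\times S}({\CU'}^\vee\otimes\CK_S[1])$ from Proposition \ref{prop:tower}, which is why the product gets reversed.

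The step I expect to be the main obstacle is making the derived-scheme bookkeeping on $\fZ_2$ rigorous: one must verify that $\fZ\times_{\CM'}\fZ$ has the expected derived structure, identify its two descriptions as iterated derived projectivizations, and pin down the virtual normal/excess bundle whose $\wedge^\bullet$ yields $\zeta^S$. This requires care with the fact that the point maps land in two different copies of $S$ (so $\CO_\Delta$ must be read in $\kss$, not pulled back to the diagonal), and with the possibility that $x=y$, where the two graphs in $\fZ_2\times S$ collide; handling that locus correctly is what forces the appearance of the full $\wedge^\bullet(-x\cdot\CO_\Delta)$ rather than a naive tangent-space contribution. Everything else — the projection formula manipulations, the use of \eqref{eqn:fundamental property} to set $\CL_1=z$, $\CL_2=w$, and the passage between the two orderings — is routine once the derived geometry of $\fZ_2$ is in place.
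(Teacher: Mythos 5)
Your outline correctly identifies $\fZ_2 = \fZ \times_{\CM'} \fZ$ as the space that governs the composition $e(z)e(w)$, and correctly pinpoints the short exact sequence \eqref{eqn:ses universal} and the structure sheaf of the diagonal as the ingredients that should produce $\zeta^S$. However, the ``excess/obstruction bundle'' heuristic you propose for comparing the two orderings is not what the paper does, and I do not think it can be made to work in the form you describe. The relation \eqref{eqn:comm rel 1} is not of the form $e(w)e(z) = e(z)e(w)\cdot\wedge^\bullet(\cdots)$: after clearing denominators it is the \emph{polynomial} identity \eqref{eqn:commy}, and the delta functions $\delta(\CL_i/z)$ cannot simply absorb a rational multiplicative factor whose poles sit on their support. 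Moreover the two orderings do not give the same class on the same derived scheme with two different obstruction theories; they give two genuinely different correspondences, which off the diagonal $x_1 = x_2$ differ by a nontrivial ``rotation'' of the intermediate sheaf $\CF'$, and the failure of this rotation to extend across the diagonal is the real subtlety.

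The key idea you are missing is the auxiliary space $Y$ of the paper's Subsection \ref{sub:blow}, fitting into the non-Cartesian diagram \eqref{eqn:nice nice diagram} over $X = \fZ \times S_1$. This $Y$ parametrizes \emph{quadruples} $\CF'' \subset \CF_1' , \CF_2' \subset \CF$ as in \eqref{eqn:quadruples}, and it is simultaneously a projective bundle $\BP_{\fZ_2}(\CL_1\,``\oplus"\,\CL_2\otimes\CO_\Delta)$ over $\fZ_2$ (via Proposition \ref{prop:blow up}) and manifestly $\BZ/2$-symmetric under swapping $\CF_1' \leftrightarrow \CF_2'$ (which exchanges $x_1 \leftrightarrow x_2$, $\CL_1\leftrightarrow\bar\CL_2$, $\CL_2\leftrightarrow\bar\CL_1$). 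The paper's argument does not compare the two orderings of $e$ directly on $\fZ_2$; instead it expresses the class on $\fZ_2$ that computes $\zeta^S(w/z)e(z)e(w)$ (after clearing denominators) as a push-forward $p'_*$ from $Y$ of a class that is antisymmetric under the $\BZ/2$-action, via Proposition \ref{prop:general proj}. Both sides of \eqref{eqn:commy} are then the \emph{same} push-forward from $Y$, and the relation drops out of the symmetry of $Y$. Without this intermediate space, the ``controllable derived structure'' and ``Koszul factor'' you invoke remain heuristics; in particular you have no mechanism for why the pole structure of $\zeta^S$ at $x=1$ and $x=q^{-1}$ cancels against the contributions from the diagonal locus $x_1=x_2$ where the naive description of $\fZ_2$ breaks down.
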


\noindent Proposition \ref{prop:comm rel 1} will be proved in Subsection \ref{sub:proof}, but before we lay the groundwork, let us explain two things about relation \eqref{eqn:comm rel 1}: how to interpret the composition of $e(z)$ and $e(w)$ (which will be done in the current paragraph), and how to make sense of the relation as an equality of bi-infinite formal series (which will be done in the next paragraph). By definition, we have the correspondences:
$$
e(z) : \km \longrightarrow K_{\CM \times S_1} \qquad \text{and} \qquad e(w) : \km \longrightarrow K_{\CM \times S_2}
$$
where we use the notation $S_1 = S_2 = S$ as a convenient way to keep track of two factors of the surface $S$ involved in the definition. By pulling back correspondences, we may think of $e(z)$ as an operator $K_{\CM \times S_2} \rightarrow K_{\CM \times S_1 \times S_2}$ acting trivially on the $S_2$ factor, which allows us to define the composition as:
$$
e(z) e(w) : \km \longrightarrow K_{\CM \times S_2} \longrightarrow K_{\CM \times S_1 \times S_2}
$$
We can take the tensor product on the target with the pull-back of $\zeta^S (w/z) \in K_{S_1 \times S_2}$, where $\Delta \hookrightarrow S_1 \times S_2$ denotes the diagonal. This gives rise to an operator:
$$
\zeta^S \left( \frac wz \right) e(z) e(w) : \km \longrightarrow K_{\CM \times S_1 \times S_2}
$$
which is precisely the left-hand side of \eqref{eqn:comm rel 1}. The right-hand side is defined analogously, by replacing $(z,S_1) \leftrightarrow (w,S_2)$ and identifying $\CM \times S_1 \times S_2 = \CM \times S_2 \times S_1$ via the permutation of the factors $S = S_1 = S_2$. Thus the important thing to keep in mind is the fact that the variables $z$ and $w$ must each correspond to the same copy of the surface $S$ in the left as in the right-hand sides of equation \eqref{eqn:comm rel 1}. \\

\noindent Let us now explain how to make sense of \eqref{eqn:comm rel 1} as an equality of bi-infinite formal series. According to Subsection \ref{sub:not}, $\zeta^S(x)$ is a rational function in $x$ with coefficients in $\kss$, so relation \eqref{eqn:comm rel 1} can be interpreted by multiplying it with the denominators of $\zeta^S(z/w)$ and $\zeta^S(w/z)$ and then equating coefficients in $z$ and $w$. Fortunately, this can be made even more explicit, as Proposition \ref{prop:codim 2} implies that:
\begin{equation}
\label{eqn:diag codim 2}
\zeta^S(x) = 1 +  \frac {[\CO_\Delta] \cdot x}{(1-x)(1-x q)} \ \in \ \kss(x)
\end{equation}
where $q = [\CK_S] \in \ks$ can be pulled back to $\kss$ via either projection (it is immaterial which, because of the factor $[\CO_\Delta]$ in \eqref{eqn:diag codim 2}). Therefore, relation \eqref{eqn:comm rel 1} should be interpreted as the following equality of operators $\km \rightarrow K_{\CM \times S \times S} \pa{z,w}$:
$$
\left[ (z - w)( zq_1 - w) (z-wq_2) + [\CO_\Delta] \cdot zw (zq-w) \right] e(z) e(w) = 
$$
\begin{equation}
\label{eqn:commy}
= \left[ (z - w)(zq_1 - w)(z-wq_2) + [\CO_\Delta] \cdot zw (z-wq) \right] e(w) e(z)
\end{equation}
(we write $q_1$ and $q_2$ for the pull-back of the canonical class on the two factors of $S \times S$, and we also write $[\CO_\Delta] q$ for either of $[\CO_\Delta] q_1 = [\CO_\Delta] q_2$). Relation \eqref{eqn:commy} is equivalent with the following collection of equalities for the coefficients \eqref{eqn:coeff series}:
$$
q_1 [e_{n+3}, e_m] - ( q_1q_2 + q_1 + 1 ) [e_{n+2}, e_{m+1}] + ( q_1 q_2 + q_2 + 1 ) [e_{n+1}, e_{m+2}] - 
$$
\begin{equation}
\label{eqn:comm rel 11}
- q_2 [e_n, e_{m+3}] = [\CO_\Delta] \cdot \left( [e_{n+1}, e_{m+2}]_{q} + [e_{m+1}, e_{n+2}]_{q} \right)
\end{equation}
for all $m,n \in \BZ$, where $[x,y]_q = xy - q yx$. Any composition of the form $e_{n+i} e_{m+j}$ or $e_{m+j} e_{n+i}$ that appears in \eqref{eqn:comm rel 11} is an operator $\km \rightarrow K_{\CM \times S_1 \times S_2}$, with:
$$
e_{n+i} : \km \rightarrow K_{\CM \times S_1} \qquad (\text{respectively } e_{m+j} : \km \rightarrow K_{\CM \times S_2})
$$
extended trivially to the $S_2$ (respectively $S_1$ factor). Therefore, the composition of operators only involves the $\km$ factor, while $K_{S_1}$ and $K_{S_2}$ behave as coefficients that do not interact with each other except through $[\CO_\Delta]$. \\

\subsection{}
\label{sub:proof}

Given sheaves $\CF, \CF'$ and a point $x \in S$, we will write $\CF' \subset_x \CF$ if $\CF' \subset \CF$ and $\CF/\CF' \cong \BC_x$. In order to prove Proposition \ref{prop:comm rel 1}, consider the following diagram:
\begin{equation}
\label{eqn:comp corr}
\xymatrix{
& & \fZ_2 \ar@/_2pc/[lldd]_{\pi_1} \ar[ld] \ar[rd] \ar@/^2pc/[rrdd]^{\pi_2} & & \\
& \fZ \times S_2 \ar[ld] \ar[rd] & & \fZ \ar[ld] \ar[rd] & \\
\CM'' \times S_1 \times S_2 & & \CM' \times S_2 & \qquad \qquad \qquad \qquad & \CM} 
\end{equation}
If the two copies of $\fZ$ on the second row parametrize pairs of sheaves $(\CF'' \subset_{x_1} \CF')$ and $(\CF' \subset_{x_2} \CF)$, respectively (denote by $S_1$ and $S_2$ the copies of $S$ where the points $x_1$ and $x_2$ lie) then the top of the diagram is defined as the derived fiber product:
\begin{equation}
\label{eqn:fz2}
\fZ_2 = \fZ \times_{\CM'} \fZ
\end{equation}
which parametrizes triples of sheaves $\{ \CF'' \subset_{x_1} \CF' \subset_{x_2} \CF, \text{ for some }x_1,x_2 \in S \}$. Rigorously speaking, $\fZ_2$ is defined in \eqref{eqn:fz2} as the projectivization of the same coherent sheaf over the first (respectively, second) factor of $\fZ \times_{\CM'} \fZ$ as the second (respectively, first) factor is over $\CM'$. The coherent sheaf in question is defined, in either \eqref{eqn:tower 1} or \eqref{eqn:tower 2}, as being quasi-isomorphic to an explicit complex of two locally free sheaves. Therefore, this defines $\fZ_2$ in \eqref{eqn:fz2} as a dg scheme. Moreover, we also have the line bundles $\CL_1$ and $\CL_2$ on $\fZ_2$, with fibers:
$$
\CL_1|_{\{ \CF'' \subset_{x_1} \CF' \subset_{x_2} \CF \}} = \CF'_{x_1}/\CF''_{x_1}
$$
$$
\CL_2|_{\{ \CF'' \subset_{x_1} \CF' \subset_{x_2} \CF \}} = \CF_{x_2}/\CF'_{x_2}
$$
Then the usual rules of composing correspondences imply that $e_n e_m$ is given by:
\begin{equation}
\label{eqn:corr fz2}
\pi_{1*} \left( \CL_1^n \CL_2^m \cdot \pi_2^* \right)
\end{equation}
Throughout the remainder of this Subsection, $\CM$, $\CM'$, $\CM''$ will refer to the three copies of the moduli space of stable sheaves where $\CF$, $\CF'$, $\CF''$ lie, respectively. \\

\begin{proof} \emph{of Proposition \ref{prop:comm rel 1}:} We refer the reader to Subsections \ref{sub:push} and \ref{sub:blow} for certain computations in $K$--theory that we will use in the course of the proof. We may use the setup therein because $\fZ$ is the projectivization of the coherent sheaf $\CU$ on $\CM \times S_2$ (since $\CU$ has a two term locally free resolution, its projectivization is defined as in Section \ref{sub:derived bundles}). Similarly, $\fZ_2$ is the projectivization of $\CU'$ on $\fZ \times S_1$:
\begin{equation}
\label{eqn:two stories}
\xymatrix{
\fZ_2 = \BP_{\fZ \times S_1} (\CU') \ar[d] & \\
\fZ \times S_1 \ar[d] & \\
\CM \times S_1 \times S_2}
\end{equation}
where $\CU'$ and $\CU$ are connected by the short exact sequence \eqref{eqn:ses universal} of sheaves on $\fZ \times S_1$:
\begin{equation}
\label{eqn:ses}
0 \rightarrow \CU' \rightarrow \CU \rightarrow \CL_2 \otimes \CO_\Delta \rightarrow 0
\end{equation}
In formula \eqref{eqn:ses}, we abuse notation in two ways in other to simplify the exposition: \\

\begin{itemize}

\item $\CO_\Gamma$ which appears in \eqref{eqn:ses universal} is the pull-back to $\fZ \times S_1$ of the structure sheaf of the diagonal $\CO_\Delta$ in $S_1 \times S_2$, and so we write ``$\CO_\Delta$" insetad of ``$\CO_\Gamma$" \\

\item we denote the line bundle $\CL_2$ on $\fZ$ and its pull-back to $\fZ \times S_1$ by the same symbol, and therefore we have $\CL_2 \otimes \CO_\Gamma = \CL_2 \otimes \Gamma_*(\CO) = \Gamma_*(\CL_2)$. 

\end{itemize}

\tab  
Recall that the line bundle $\CL_1$ equals the invertible sheaf $\CO(1)$ on the $\BP$ on the top of diagram \eqref{eqn:two stories}. Let us consider the diagram \eqref{eqn:nice nice diagram} associated to the short exact sequence \eqref{eqn:ses} on $X = \fZ \times S_1$:
\begin{equation}
\label{eqn:nice diagram proof}
\xymatrix{& Y \ar[ld]_{p'} \ar[rd]^p & \\
\BP_X(\CU') \ar[rd] & & \BP_X(\CU) \ar[ld]\\
& X &}
\end{equation}
Note that the square is not derived Cartesian. Instead, recall that $\fZ_2 = \BP_X(\CU')$ parametrizes triples $\CF'' \subset \CF'_1 \subset \CF$, while $\BP_X(\CU)$ parametrizes triples $\CF_1', \CF_2' \subset \CF$. According to \eqref{eqn:nice diagram} and \eqref{eqn:nice square}, the space $Y$ then parametrizes quadruples:
\begin{equation}
\label{eqn:quadruples}
\xymatrix{& \CF_1' \ar@{^{(}->}[rd]^{x_2}_{\CL_2} & \\
\CF'' \ar@{^{(}->}[ru]^{x_1}_{\CL_1} \ar@{^{(}->}[rd]_{x_2}^{\bar{\CL}_2}
 & & \CF \\
& \CF_2' \ar@{^{(}->}[ru]_{x_1}^{\bar{\CL}_1} &}
\end{equation}
where each inclusion is a length 1 sheaf, with the support points $x_1$ or $x_2$, depending on the label on the arrows. We also write $\CL_1, \CL_2, \bar{\CL}_1, \bar{\CL}_2$ for the corresponding quotient line bundles, as in diagram \eqref{eqn:quadruples}. The maps $p'$ and $p$ in \eqref{eqn:nice diagram proof} are given by forgetting $\CF_2'$ and $\CF''$, respectively. According to Proposition \ref{prop:blow up}, the map $p'$ is explicitly the projectivization:
\begin{equation}
\label{eqn:proj y}
Y = \BP_{\fZ_2} \left( \CL_1 ``\oplus" \CL_2 \otimes \CO_\Delta \right) 
\end{equation} 
where the notation $``\oplus"$ stands for a non-trivial extension of the line bundle $\CL_1$ with $\CL_2 \otimes \CO_\Delta$. Moreover, the line bundle $\CO(1)$ of the projectivization \eqref{eqn:proj y} is precisely $\bar{\CL}_1$ in \eqref{eqn:quadruples}. Then we may invoke Proposition \ref{prop:general proj} to obtain:
\begin{equation}
\label{eqn:chosen push}
p'_* \Big[ \bar{\CL}_1^n \CL_2^m (\bar{\CL}_1 - \CL_2)(\bar{\CL}_1 q_{(1)} - \CL_2)(\bar{\CL}_1 - \CL_2 q_{(2)}) \Big] = 
\end{equation}
$$
= \i z^n \CL_2^m (z - \CL_2)(z q_{(1)} - \CL_2)( z - \CL_2 q_{(2)}) \cdot \frac { \zeta \left( \frac {\CL_2}z \right) }{\left(1- \frac {\CL_1} z\right)}
$$
where $q_{(1)}$ and $q_{(2)}$ denote the canonical classes (in $K$--theory) of the two factors of $S_1 \times S_2$, and their pull-backs to $\fZ_2$ and $Y$. We can apply \eqref{eqn:diag codim 2} to rewrite \eqref{eqn:chosen push} as:
$$
= \i \frac {z^n \CL_2^m(z q_{(1)} - \CL_2)}{\left(1- \frac {\CL_1} z\right)} \Big[ (z - \CL_2)(z - \CL_2 q_{(2)}) + [\CO_\Delta] z \CL_2 \Big] =
$$
$$
= \CL_1^n \CL_2^m (\CL_1 q_{(1)} - \CL_2)\Big[ (\CL_1 - \CL_2)(\CL_1 - \CL_2 q_{(2)}) + [\CO_\Delta] \CL_1 \CL_2 \Big]
$$
where the last equality follows from the fact that the only pole of the integral, apart from $0$ and $\infty$, is $z = \CL_1$. The right-hand side is a class on $\fZ_2$, which according to \eqref{eqn:corr fz2}, precisely produces the operator $\km \rightarrow K_{\CM'' \times S_1 \times S_2}$ that appears in the left-hand side of \eqref{eqn:commy}. However, the space $Y$ parametrizing quadruples \eqref{eqn:quadruples} is symmetric in $\CF_1'$ and $\CF_2'$, up to replacing $x_1 \leftrightarrow x_2$, $\CL_1 \leftrightarrow \bar{\CL}_2$, $\CL_2 \leftrightarrow \bar{\CL}_1$. Since up to sign, so is the class in the left-hand side of \eqref{eqn:chosen push}, we conclude that the left-hand side of \eqref{eqn:commy} is antisymmetric, which is precisely what equality \eqref{eqn:commy} states.

\end{proof}

\subsection{}
\label{sub:enter h}

Recall that $q = [\CK_S] \in \ks$. To complete the picture given by the operators of \eqref{eqn:def e} and \eqref{eqn:def f}, let us define the operators of multiplication:
$$
h^\pm(z) : \km \longrightarrow \kms[[z^{\mp 1}]],
$$
\begin{equation}
\label{eqn:def h}
h^\pm(z) = \text{multiplication by } \wedge^\bullet \left( \frac {\CU (q^{-1}-1)}z  \right)
\end{equation}
where we expand the currents $h^+(z)$ and $h^-(z)$ in opposite powers of $z$:
\begin{equation}
\label{eqn:coeff h}
h^+(z) = \sum_{n = 0}^\infty \frac {h^+_n}{z^n} \qquad \qquad h^-(z) = \sum_{n = 0}^\infty \frac {h^-_n}{z^{-n}}
\end{equation}
Note that $h^+_0 = 1$ and $h^-_0 = q^{-r}$, where $r$ denotes the rank of our sheaves. \\

\begin{proposition}
\label{prop:comm rel 2}

We have the following commutation relations:
\begin{equation}
\label{eqn:comm rel 2}
\zeta^S \left( \frac wz \right) e(z) h^\pm(w) = \zeta^S \left( \frac zw \right) h^\pm(w) e(z) 
\end{equation}
of operators $\km \rightarrow \kmss \pa{z} [[w^{\mp 1}]]$ and the opposite relation with $e \leftrightarrow f$.

\end{proposition}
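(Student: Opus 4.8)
The plan is to realize both sides of \eqref{eqn:comm rel 2} as $K$--theoretic push-forwards from the single correspondence $\fZ\times S_2$, where $S_1$ and $S_2$ denote the copies of $S$ carrying the variables $z$ and $w$. Since $h^\pm(w)$ is multiplication by $\wedge^\bullet\big(\tfrac{w(q-1)}{\CU}\big)$, the composition $e(z)h^\pm(w)$ involves the universal sheaf pulled back from $\CM\times S_2$ (fibers $\CF$), while $h^\pm(w)e(z)$ involves the one pulled back from $\CM'\times S_2$ (fibers $\CF'$); on $\fZ\times S_2$ these are related by \eqref{eqn:ses universal}, i.e. $0\to\CU'\to\CU\to\Gamma_*\CL\to 0$ with $\Gamma:\fZ\hookrightarrow\fZ\times S_2$ the graph of $p_S$. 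Using the multiplicativity \eqref{eqn:multiplicative} of $\wedge^\bullet$, the projection formula, and the fact that $\wedge^\bullet\big(\tfrac{w(q-1)}{\CU'}\big)$ is pulled back from $\CM'\times S_2$, I would show that $e(z)h^\pm(w)$ equals $h^\pm(w)e(z)$ up to inserting, inside the push-forward $p_{2S*}\big(\delta(\tfrac{\CL}{z})\,p_1^*(-)\big)$ defining $e(z)$, the correction factor $C(z,w)=\wedge^\bullet\!\big(w(q-1)\cdot(\Gamma_*\CL)^\vee\big)$. Dually, $\zeta^S(w/z)$ and $\zeta^S(z/w)$ pull back along $p_{2S}$ to $\wedge^\bullet(-\tfrac wz[\CO_\Gamma])$ and $\wedge^\bullet(-\tfrac zw[\CO_\Gamma])$, since $\Gamma$ is the (derived) preimage of the diagonal $\Delta\subset S_1\times S_2$ under $(p_S,\mathrm{id})$. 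Thus, by the projection formula, the proposition reduces to the identity
$$ \delta\!\left(\tfrac{\CL}z\right)\cdot\wedge^\bullet\!\left(-\tfrac wz[\CO_\Gamma]\right)\cdot C(z,w) \ = \ \delta\!\left(\tfrac{\CL}z\right)\cdot\wedge^\bullet\!\left(-\tfrac zw[\CO_\Gamma]\right) $$
of classes on $\fZ\times S_2$; the $e\leftrightarrow f$ case follows from the same argument applied to the other projective-bundle presentation \eqref{eqn:tower 2} of $\fZ$, bearing in mind the normalization $\tfrac{(-z)^r}{\det\CU}$ in \eqref{eqn:def f}.

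To handle the correction factor I would first compute $(\Gamma_*\CL)^\vee$ by Grothendieck--Serre duality: $\Gamma$ is a regular embedding of codimension $2$ with normal bundle $p_S^*T_S$, so $\det N_\Gamma = q^{-1}$ (pulled back) and $(\Gamma_*\CL)^\vee = \Gamma_*(\CL^{-1}q^{-1})$ in $K$--theory; by the projection formula $w(q-1)(\Gamma_*\CL)^\vee = w[\Gamma_*\CL^{-1}] - w[\Gamma_*(\CL^{-1}q^{-1})]$, hence $C(z,w) = \wedge^\bullet(w[\Gamma_*\CL^{-1}])/\wedge^\bullet(w[\Gamma_*(\CL^{-1}q^{-1})])$, which (crucially) involves $\CL$ and $w$ but not $z$, so the $\delta$--function of \eqref{eqn:fundamental property} can be used to replace $\CL$ by $z$. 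Now each of the three factors above has the shape $1+(\text{a class supported on }\Gamma)$, so the identity may be checked after restriction along $\iota:\Gamma\hookrightarrow\fZ\times S_2$; there the self-intersection formula gives $\iota^*[\CO_\Gamma] = \wedge^\bullet(p_S^*\Omega^1_S) = (1-a)(1-b)$ and $\iota^*[\Gamma_*M] = (M|_\Gamma)(1-a)(1-b)$ for any line bundle $M$, with $a,b$ the Chern roots of $\Omega^1_S$ (and $ab=q$). Using \eqref{eqn:multiplicative} (equivalently the closed form \eqref{eqn:diag codim 2}), the restrictions of $\wedge^\bullet(-\tfrac wz[\CO_\Gamma])$ and of $C(z,w)$ become explicit products of linear factors, and after substituting $\CL=z$ the left-hand side collapses:
$$ \frac{(1-\tfrac wz a)(1-\tfrac wz b)}{(1-\tfrac wz)(1-\tfrac wz ab)}\cdot\frac{(1-\tfrac{w}{za})(1-\tfrac{w}{zb})(1-\tfrac wz ab)}{(1-\tfrac wz a)(1-\tfrac wz b)(1-\tfrac{w}{zab})} \ = \ \frac{(za-w)(zb-w)}{(z-w)(zab-w)}, $$
which is precisely the restriction of $\wedge^\bullet(-\tfrac zw[\CO_\Gamma])$. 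The superscripts $\pm$ then correspond to the two expansions of this rational identity near $w=\infty$ and near $w=0$, after clearing the denominators of the $\zeta^S$'s exactly as in the discussion following Proposition \ref{prop:comm rel 1}.

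The main obstacle is the derived-scheme bookkeeping rather than the final calculation: one must verify that the projection formula, the short exact sequence \eqref{eqn:ses universal}, and the base-change identity $p_{2S}^*[\CO_\Delta]=[\CO_\Gamma]$ are valid for the (a priori non-smooth, derived) scheme $\fZ$ using only the operations sanctioned in Subsections \ref{sub:not}--\ref{sub:derived bundles}, and one must get the Serre-duality twist $(\Gamma_*\CL)^\vee=\Gamma_*(\CL^{-1}q^{-1})$ and all the accompanying powers of $q$ right, tracking them against the conventions of \eqref{eqn:multiplicative}, \eqref{eqn:def h} and \eqref{eqn:diag codim 2}. Once those are pinned down, the reduction to the rational identity on $\Gamma$ and its verification are routine.
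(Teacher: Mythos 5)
Your strategy is in essence the same as the paper's: realize both sides as classes on the single correspondence $\fZ\times S_2$, use the short exact sequence \eqref{eqn:ses universal} to relate $\CU$ and $\CU'$ there, invoke the $\delta$--function property \eqref{eqn:fundamental property} to replace $\CL$ by $z$, and reduce to a universal identity involving $[\CO_\Gamma]$. Your computation $(\Gamma_*\CL)^\vee = \Gamma_*(\CL^{-1}q^{-1})$ is a more explicit, Serre-duality rendering of what the paper packages as $[\CO_\Delta] = q[\CO_\Delta^\vee]$ together with \eqref{eqn:dual wedge}, and your final rational-function verification is equivalent to the paper's identities \eqref{eqn:funk 1}--\eqref{eqn:funk 2}.

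The one step you should not wave through is ``each of the three factors has the shape $1+$(class supported on $\Gamma$), so the identity may be checked after restriction along $\iota:\Gamma\hookrightarrow\fZ\times S_2$.'' This is not automatic: writing the three factors as $1+\iota_*\alpha$, $1+\iota_*\beta$, $1+\iota_*\gamma$, the product identity expands via the projection formula to $\iota_*\bigl(\alpha+\beta+\alpha\cdot e\cdot\beta-\gamma\bigr)=0$ where $e=\iota^*[\CO_\Gamma]$, whereas verifying after pulling back along $\iota$ establishes only $e\cdot\bigl(\alpha+\beta+\alpha e\beta-\gamma\bigr)=0$, which is genuinely weaker because the Euler class $e=(1-a)(1-b)$ is a zero-divisor in $K_\Gamma$. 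What saves the argument is that you carry out the verification for the Chern roots $a,b$ treated as formal variables, where $(1-a)(1-b)$ is not a zero-divisor and can be cancelled, after which the resulting identity of rational functions specializes by the splitting principle. This should be said explicitly. Alternatively, one can avoid the restriction entirely — as the paper does — by phrasing the needed input as identities in $K_{S\times S}(x)$, namely \eqref{eqn:funk 1} and \eqref{eqn:funk 2}, and verifying them directly from a locally free resolution of $\CO_\Delta$ (compare Proposition \ref{prop:codim 2}). Once this point is tightened, the rest of your argument, including the $e\leftrightarrow f$ case via \eqref{eqn:tower 2} and \eqref{eqn:push taut gen 1}--\eqref{eqn:push taut gen 2}, goes through.
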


\noindent We make sense of \eqref{eqn:comm rel 2} by expanding first in $w$ and then in $z$, so one may translate it into a collection of commutation relations between the operators $e_n$ and $h_m^\pm$, i.e.:
$$
e_n h^+_1 + e_{n+1} [\CO_\Delta^\vee] = h_1^+ e_n + e_{n+1} [\CO_\Delta]
$$
$$
e_n  h^+_2 + e_{n+1}  h^+_1 [\CO_\Delta^\vee] + e_{n+2} [S^2 \CO_\Delta^\vee] = h_2^+  e_n + h_1^+  e_{n+1} [\CO_\Delta] + e_{n+2} [S^2 \CO_\Delta]
$$
and so on, for all $n \in \BZ$. The corresponding relations for $h^-_m$ are analogous. \\


\begin{proof} By definition, the left-hand side of \eqref{eqn:comm rel 2} is given by the $K$--theory class:
\begin{equation}
\label{eqn:compare}
\wedge^\bullet \left(- \frac wz \cdot \CO_\Delta \right) \delta \left(\frac {\CL}z \right) \wedge^\bullet \left( \frac {\CU(q^{-1}-1)}w \right)
\end{equation}
on $\fZ \times S$, while the right-hand side of \eqref{eqn:comm rel 2} is given by the $K$--theory class:
\begin{equation}
\label{eqn:compare 2}
\wedge^\bullet \left(- \frac zw \cdot \CO_\Delta \right) \delta \left(\frac {\CL}z \right) \wedge^\bullet \left( \frac {\CU'(q^{-1}-1)}{w} \right)
\end{equation}
on $\fZ \times S$. Note that \eqref{eqn:ses universal} implies that $[\CU] = [\CU'] + [\CL] \cdot [\CO_\Delta]$ (here we note that the sheaf $\CO_\Gamma$ on $\fZ \times S$ matches the pull-back of $\CO_\Delta$ on $S \times S$, and we abuse notation by writing $\CL$ both for the tautological line bundle on $\fZ$ and for its pull-back to $\fZ \times S$). Then the expression \eqref{eqn:compare 2} equals: 
$$
\wedge^\bullet \left(- \frac zw \cdot \CO_\Delta \right) \delta \left(\frac {\CL}z \right) \wedge^\bullet \left( \frac {\CU(q^{-1}-1)}{w} \right) \wedge^\bullet \left(- \frac {\CL(q^{-1}-1)}w \cdot \CO_\Delta \right) = 
$$
$$
= \wedge^\bullet \left(- \frac zw \cdot \CO_\Delta \right) \delta \left(\frac {\CL}z \right) \wedge^\bullet \left( \frac {\CU(q^{-1}-1)}{w} \right) \wedge^\bullet \left(- \frac {z(q^{-1}-1)}w \cdot \CO_\Delta \right)
$$
where the equality uses the fundamental property \eqref{eqn:fundamental property} of the $\delta$ function. The right-hand side of the expression above is equal to \eqref{eqn:compare} once we use the relation: 
\begin{equation}
\label{eqn:flip}
\wedge^\bullet\left(-\frac wz \cdot \CO_\Delta \right) = \wedge^\bullet\left(-\frac z{wq} \cdot \CO_\Delta \right)
\end{equation}
itself a consequence of \eqref{eqn:dual wedge} and $[\CO_\Delta] = q [\CO_\Delta^\vee]$.


\end{proof}

\begin{proposition}
\label{prop:comm rel 3}

We have the following commutation relation:
\begin{equation}
\label{eqn:comm rel 3}
[e(z), f(w)] = \delta \left( \frac zw \right) \Delta_* \left( \frac {h^+(z) - h^-(w)}{1-q^{-1}} \right)
\end{equation}
where the right-hand side denotes the operator of multiplication with a certain class on $\CM \times S$, as in \eqref{eqn:def h}, followed by the diagonal map $\Delta : S \hookrightarrow S \times S$. \\

\end{proposition}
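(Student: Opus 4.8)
The plan is to compute both sides of \eqref{eqn:comm rel 3} as explicit $K$--theory correspondences and match them. The key geometric input is that the two compositions $e(z)f(w)$ and $f(w)e(z)$ are defined via fiber products of two copies of $\fZ$ over different moduli spaces, and these two fiber products have a large common open piece but differ along a ``diagonal'' locus. Concretely, $f(w)e(z)$ runs through $\CM' \leftarrow \fZ \rightarrow \CM \leftarrow \fZ \rightarrow \CM'$-type data, i.e. pairs $\CF'' \subset_{x_1} \CF \supset_{x_2} \CF'$ with $\CF$ in the ``large'' middle (so $c_2(\CF) = c_2(\CF'') + 1 = c_2(\CF') + 1$ if we want the composition to land in the same component), while $e(z)f(w)$ runs through pairs $\CF'' \supset_{x_1} \CG \subset_{x_2} \CF'$ with $\CG$ in the ``small'' middle. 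When $x_1 \neq x_2$ these two loci are canonically identified (take $\CG = \CF'' \cap \CF'$ inside $\CF$, or conversely $\CF = \CF'' + \CF'$), and the tautological line bundles $\CL_1, \CL_2$ match up, so the contributions cancel in the commutator. The entire content of the Proposition is therefore the analysis of what happens along $x_1 = x_2 = x$, i.e. over the diagonal $\Delta \subset S \times S$, where this identification breaks down.

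First I would set up the two fiber product derived schemes and identify the open complement of the diagonal locus, establishing the cancellation there via the $\CG = \CF' \cap \CF''$ / $\CF = \CF' + \CF''$ bijection and checking that $\CL_1 \leftrightarrow \CL_1$, $\CL_2 \leftrightarrow \CL_2$ and the twisting factor $\frac{(-w)^r}{\det\CU}$ in \eqref{eqn:def f} behave compatibly (here one uses that away from the diagonal $\det\CU$ is the same whether computed at $\CF$ or at the appropriate neighbor, up to the line bundle $\CL$). Second, I would restrict attention to the locus where $x_1 = x_2$: for a fixed sheaf $\CF$ (resp.\ $\CF'$) and a fixed point $x$, the relevant fiber in $e(z)f(w)$ is built from $\BP\RHom(\CF,\CO_x) \times \BP\RHom(\CF,\CO_x)$-type data via \eqref{eqn:ouver}, and similarly for $f(w)e(z)$; the difference of the two push-forwards localizes to this diagonal and should be expressible through the excess/self-intersection contribution. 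The $\delta(z/w)$ on the right-hand side of \eqref{eqn:comm rel 3} reflects exactly that only the $\CL_1 = \CL_2$ (equivalently $z = w$ after substituting eigenvalues) part survives, since on the diagonal the two tautological line bundles are forced to agree — this is where the fundamental property \eqref{eqn:fundamental property} of $\delta$ enters.

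Third, I would compute the coefficient class on $\CM \times S$ (before applying $\Delta_*$). Using the projective bundle formulas of the type already used in the proof of Proposition \ref{prop:comm rel 1} (Propositions \ref{prop:blow up}, \ref{prop:general proj}), the residue extracting the $x_1 = x_2$ contribution picks up $\wedge^\bullet$ of the virtual normal bundle of the Hecke correspondence, which by Proposition \ref{prop:tower} and \eqref{eqn:length 1} is governed by $\RHom(\CF,\CO_x)$ and its Serre dual $\RHom(\CF,\CO_x \otimes \CK_S)^\vee[{-2}]$ — i.e.\ by $\CU$ and $\CU \otimes \CK_S$ restricted along $\Delta$. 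After simplification this produces precisely $\wedge^\bullet\!\left(\frac{z(q-1)}{\CU}\right)$, matching the definition \eqref{eqn:def h} of $h^\pm$; the $\frac{1}{1-q}$ normalization comes from the geometric series $\frac{1}{1-q} = \wedge^\bullet(-q \cdot \text{pt})/(\text{something})$-type identity, or more precisely from dividing the difference of the two expansions. The appearance of \emph{two} terms $h^+(z) - h^-(w)$ rather than one is the familiar phenomenon that the two fiber products $\fZ \times_{\CM}\fZ$ and $\fZ \times_{\CM'}\fZ$ contribute diagonal pieces supported over \emph{different} moduli spaces ($\CM$ vs.\ $\CM'$), which after the $\delta$-function identification $z = w$ become the ``$+$'' expansion (near $z = \infty$, from the $e f$ order) and the ``$-$'' expansion (near $z = 0$, from the $f e$ order); the sign and the expansion direction are dictated by which of $r_{\text{big}}, r_{\text{small}}$ in \eqref{eqn:def single} the relevant pole $\CL$ sits relative to.

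The main obstacle I anticipate is the careful bookkeeping of the derived scheme structures along the diagonal: the two fiber products $\fZ \times_\CM \fZ$ and $\fZ \times_{\CM'} \fZ$ are \emph{derived} fiber products and need not be of expected dimension along $x_1 = x_2$, so the cancellation-away-from-the-diagonal argument must be upgraded to a statement about derived schemes (the ``clean'' part of the intersection versus the excess part), and the residue computation must correctly track the contribution of the non-reduced diagonal structure. This is exactly the kind of subtlety flagged in the excerpt about needing a $K$--theory that handles derived zero sections of vector bundles. Concretely, I expect one must realize both $\fZ_2$-type spaces as derived projectivizations over a common base (as in diagram \eqref{eqn:two stories}) and compare them via a blow-up/flip square analogous to \eqref{eqn:nice diagram proof}; the diagonal locus is the exceptional locus, and the formula \eqref{eqn:comm rel 3} is the ``wall-crossing'' discrepancy between the two push-forwards. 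Verifying that this discrepancy is exactly $\delta(z/w)\Delta_*\!\left(\frac{h^+(z)-h^-(w)}{1-q}\right)$, with the correct $\CK_S$-twist encoded in $q$, is the crux; the rest is the same projective-bundle calculus already deployed for Propositions \ref{prop:comm rel 1} and \ref{prop:comm rel 2}.
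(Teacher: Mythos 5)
Your first step matches the paper: the two derived fiber products $\fZ_2^\pm$ are identified away from a diagonal via $\CF' \mapsto \CF \cap \CF''$ / $\CF' \mapsto \CF \cup \CF''$, and excision gives that $[e(z),f(w)]$ is supported on the diagonal. Two details to fix there. First, you have the middle sheaf sizes reversed: $f(w)e(z)$ passes through the \emph{small} middle sheaf $\CF' \subset \CF, \CF''$ (the paper's $\fZ_2^+$), while $e(z)f(w)$ passes through the \emph{large} middle (the paper's $\fZ_2^-$). Second, the identification does not merely require $x_1 \neq x_2$; the precise condition (as the paper notes) is $\CF \not\cong \CF''$, i.e.\ that $\CF$ and $\CF''$ differ as subsheaves of the common double dual. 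It happens that on $\fZ_2^\pm$ the locus $\CF=\CF''$ forces $x_1=x_2$, so the commutator is supported on the small diagonal $\CM\times S$, but your framing ``cancellation for $x_1\neq x_2$, analysis on $x_1=x_2$'' is strictly weaker than what is needed and would leave an uncancelled piece over $\{\CF\neq\CF'',\ x_1=x_2\}$.

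The more substantial divergence is in how the coefficient class is computed. You propose to extract the diagonal contribution directly as an excess/self-intersection of the two derived fiber products via their virtual normal bundles, and you correctly flag this as the hard part. The paper avoids that computation entirely by a simple but crucial observation: once one knows the commutator is supported on the diagonal, it acts by \emph{multiplication} by a class on $\CM\times S$, and a multiplication operator is completely determined by its value on the unit class $1\in K_\CM$. The paper then computes $e(z)f(w)\cdot 1$ and $f(w)e(z)\cdot 1$ explicitly using the projective-bundle push-forward formulas of Proposition \ref{prop:general proj} together with the two descriptions of $\fZ$ as a derived projectivization (Definition \ref{def:tower} and Proposition \ref{prop:tower}); both give the same rational expression, and the difference is the sum of residues at the two finite poles $\alpha=1$ and $\alpha=q^{-1}$ of $\zeta^S$, which produces exactly $\delta(z/w)\Delta_*\big(\tfrac{h^+(z)-h^-(w)}{1-q}\big)$. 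Your heuristic that ``$+$'' and ``$-$'' expansions come from the two fiber products is not how the two terms arise in the paper: they come from the two poles of $\zeta^S$ and the order of residue extraction, after the cut-and-paste argument has already reduced to a single rational expression. The ``evaluate on $1$'' trick is the key idea you are missing; without it your third step is a genuinely open derived-intersection-theory computation, whereas with it the proof reduces to one-variable residue calculus already set up for Propositions \ref{prop:comm rel 1} and \ref{prop:comm rel 2}.
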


\begin{remark} We note that the object inside $\Delta_*$ in \eqref{eqn:comm rel 3} is an actual $K$--theory class, in spite of the denominator $1-q^{-1}$. More specifically, the coefficients of relation \eqref{eqn:comm rel 3} in $z$ and $w$ give rise to the family of relations:
$$
[ e_n, f_m ] = \frac 1{1-q^{-1}} \cdot 
\begin{cases}
h^+_{n+m} & \text{if } n+m > 0 \\
h_0^+ - h_0^- & \text{if } n+m = 0 \\
- h^-_{-n-m} & \text{if } n+m < 0 
\end{cases}
$$
The fact that the operators in the bracket in the right-hand side are multiples of $1-q^{-1}$ follows from the definition in \eqref{eqn:def h} and the fact that $h^+_0 = 1$ and $h^-_0 = q^{-r}$. \\

\end{remark}

\begin{proof} The compositions $e(z) f(w)$ and $f(w) e(z)$ are given by the correspondences:
\begin{equation}
\label{eqn:zoe}
\frac {\det \CU}{(-w)^r q^{r-1}} \cdot (\rho^+_* \text{ and } \rho^-_*) \left[ \delta \left( \frac {\CL_1}z \right) \delta \left( \frac {\CL_2}w \right) \right] 
\end{equation}
respectively, where we consider the following derived fiber products $\fZ \times_{\CM'} \fZ$:
\begin{equation}
\label{eqn:2+}
\fZ_2^+ = \{ \CF \subset \CF' \supset \CF'' \text{ such that } \CF'/\CF \cong \BC_{x_1} \text{ and } \CF'/\CF'' \cong \BC_{x_2} \}
\end{equation}
\begin{equation}
\label{eqn:2-}
\fZ_2^- = \{ \CF \supset \CF' \subset \CF'' \text{ such that } \CF/\CF' \cong \BC_{x_2} \text{ and } \CF''/\CF' \cong \BC_{x_1} \}
\end{equation}
We let $\CL_1$, $\CL_2$ denote the line bundles on $\fZ_2^+$, $\fZ_2^-$ that parametrize the one dimensional quotients denoted by $\BC_{x_1}$, $\BC_{x_2}$ in either \eqref{eqn:2+} or \eqref{eqn:2-}. Finally, define:
$$
\rho^\pm : \fZ_2^\pm \longrightarrow \CM \times \CM'' \times S \times S
$$
to be the maps which remember $\CF, \CF'', x_1, x_2$. If we were tracking the connected components of the moduli spaces $\CM$ and $\CM''$ (which, we recall, are indexed by the second Chern class $c_2$) we ought to replace the codomain of the maps $\rho^\pm$ by $\sqcup_{c_2 \in \BZ} \ \CM_{(r,c_1,c_2)} \times \CM_{(r,c_1,c_2)} \times S \times S$. The key observation is the following: \\

\begin{claim}

The dg schemes $\fZ_2^\pm$ are isomorphic on the complement of the diagonal: 
\begin{equation}
\label{eqn:marc}
\CM \times S \hookrightarrow \CM \times \CM'' \times S \times S
\end{equation}
This isomorphism sends the bundles $\CL_1$, $\CL_2$ on $\fZ_2^+$ to the bundles $\CL_1$, $\CL_2$ on $\fZ_2^-$.

\end{claim}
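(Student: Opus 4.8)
The plan is to exhibit mutually inverse morphisms $\fZ_2^+ \leftrightarrow \fZ_2^-$ over the open locus where $\CF \neq \CF''$, using the two most elementary operations on coherent sheaves — pushout and intersection — and then to observe that this locus is precisely the complement of the diagonal $\CM \times S$. On closed points, a point of $\fZ_2^+$ is a pair of inclusions $\CF' \hookrightarrow \CF$ and $\CF' \hookrightarrow \CF''$ with cokernels $\CO_{x_1}$ and $\CO_{x_2}$; I send it to the pushout $\CG := \CF \sqcup_{\CF'} \CF''$. Since the pushout of a pair of monomorphisms in an abelian category is again a pair of monomorphisms $\CF, \CF'' \hookrightarrow \CG$ whose square is also cartesian, one gets $\CG/\CF \cong \CF''/\CF' \cong \CO_{x_2}$, $\CG/\CF'' \cong \CF/\CF' \cong \CO_{x_1}$, and $\CF \cap \CF'' = \CF'$ inside $\CG$; thus $(\CF \subset \CG \supset \CF'')$ defines a point of $\fZ_2^-$ — once one knows that $\CG$ is again stable, which is the statement that an elementary modification of a stable sheaf is stable, i.e.\ Proposition \ref{prop:hecke}. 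Conversely a point $(\CF \subset \CG \supset \CF'')$ of $\fZ_2^-$ is sent to $(\CF \supset \CF \cap \CF'' \subset \CF'')$, the intersection being taken inside $\CG$. These assignments are mutually inverse exactly when $\CF + \CF'' = \CG$: this is automatic when $x_1 \neq x_2$ (then $\CG/(\CF+\CF'')$ is at once a quotient of $\CO_{x_1}$ and of $\CO_{x_2}$, hence $0$), and a short length count shows that when $x_1 = x_2$ one still has $\CF + \CF'' = \CG$ unless $\CF = \CF''$ — and $\CF = \CF''$ in turn forces $x_1 = x_2$, so the bad locus is exactly the image of $\CM \times S$.

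The identification of the tautological line bundles is built into the construction. On $\fZ_2^+$ the bundle $\CL_1$ is the quotient line bundle $\CF/\CF'$ (with fibre $\CF_{x_1}/\CF'_{x_1}$), while on $\fZ_2^-$ the bundle $\CL_1$ is $\CG/\CF''$ (with fibre at $x_1$); the pushout square supplies the canonical isomorphism $\CF/\CF' \cong (\CF+\CF'')/\CF'' = \CG/\CF''$, identifying the two. Symmetrically $\CL_2 = \CF''/\CF' \cong \CG/\CF$ matches the two copies of $\CL_2$. Consequently the correspondence \eqref{eqn:zoe} is transported verbatim to the other side.

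To promote the bijection on points to an isomorphism of schemes I would rephrase everything through the functors of flags represented by $\bfZ_2^\pm$ — the evident two-step analogues of the functor $\sZ$ of Subsection \ref{sub:flag}, whose representability is established in the Appendix. For a test scheme $T$, a map to $\fZ_2^+$ landing off the diagonal is a pair of inclusions $\CF' \hookrightarrow \CF$, $\CF' \hookrightarrow \CF''$ of $T$-flat sheaves on $T \times S$ with cokernels supported on two disjoint sections of $T \times S \to T$; forming the relative pushout $\CG$ over $T \times S$ commutes with arbitrary base change, because the cokernels are $T$-flat with disjoint supports so that no higher Tor intervenes, and dually for the relative intersection. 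This produces the natural transformation and its inverse, hence the isomorphism of schemes, and the line bundle statement follows since the isomorphisms of the previous paragraph are natural in $T$.

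I expect the main obstacle to be exactly this passage from points to families — the base-change compatibility of the relative pushout and intersection over the non-diagonal locus — together with, if one additionally wants the statement at the derived level (as is convenient for extracting the $[e,f]$ relation afterwards), the further check that $\fZ_2^\pm$ carry no derived structure there. The latter is the Tor-independence of the two Hecke projections $\fZ \rightrightarrows \CM'$ away from the diagonal, and it can be read off from the same tangent-space computation that underlies \eqref{eqn:expected dimension}: off the diagonal the section cutting out $\fZ_2^\pm$ inside the relevant projective bundle is regular, so the derived and classical loci agree. Everything else — the pushout/intersection construction, the length count pinning down the diagonal, and the matching of $\CL_1, \CL_2$ — is formal.
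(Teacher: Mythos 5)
Your proof is correct and rests on the same idea as the paper's brief argument: the inverse assignments are the union and intersection of $\CF, \CF''$ over $\CF'$, which you express intrinsically as pushout and pullback while the paper realizes them as $\CF \cup \CF''$ and $\CF \cap \CF''$ inside the common double dual $\CV = \CF^{\vee\vee}$. The two phrasings agree, but the paper's choice makes one small point automatic that your write-up defers: as a subsheaf of the vector bundle $\CV$, the union $\CF \cup \CF''$ is torsion-free, so Proposition~\ref{prop:hecke} applies outright, whereas the abstract pushout $\CG = \CF \sqcup_{\CF'} \CF''$ acquires torsion exactly when $\CF = \CF''$ (there $\CG \cong \CF \oplus \CO_{x_1}$), so you should verify you are off the diagonal \emph{before} invoking Proposition~\ref{prop:hecke} for stability of $\CG$, not after. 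You do locate $\{\CF = \CF''\}$ as the bad locus a few lines later, so this is only an ordering slip. Your additional remarks on promoting the bijection to a scheme isomorphism via the functor of points, and on the absence of derived structure off the diagonal, go beyond what the paper bothers to say, but are consistent with it; the subsequent excision argument in Proposition~\ref{prop:comm rel 3} only needs the statement at the level of Grothendieck groups of the underlying classical open subschemes.
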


\tab 
Indeed, the isomorphism is given by the following obviously inverse assignments: 
\begin{equation}
\label{eqn:ass 1}
\fZ_2^- \ni (\CF \supset \CF' \subset \CF'') \mapsto (\CF \subset \CF \cup \CF'' \supset \CF'') \in \fZ_2^+
\end{equation}
\begin{equation}
\label{eqn:ass 2}
\fZ_2^+ \ni (\CF \subset \CF' \supset \CF'') \mapsto (\CF \supset \CF \cap \CF'' \subset \CF'') \in \fZ_2^-
\end{equation}
These formulas should be read by picturing all the sheaves involved as subsheaves of their double dual $\CV$ (the double duals of the torsion-free sheaves $\CF,\CF',\CF''$ as in \eqref{eqn:ass 1}, \eqref{eqn:ass 2} are stable locally free sheaves on $S$, all uniquely isomorphic up to scalar multiple, and thus naturally identifiable with each other). For example, in \eqref{eqn:ass 1} we take two subsheaves $\CF, \CF'' \subset \CV$ whose intersection is colength 1 in each of them, and claim that the union $\CF \cup \CF'' \subset \CV$ contains $\CF,\CF''$ as colength 1 subsheaves. For these assignments to be well-defined, it is important that $\CF \neq \CF''$ as subsheaves of their double dual, which is equivalent to requiring that $\CF \not \cong \CF''$.

\tab 
As a consequence of the claim, and the excision long exact sequence in algebraic $K$--theory, we conclude that the commutator $[e(z), f(w)]$ is given by a class supported on the diagonal \eqref{eqn:marc} of $\CM \times \CM'' \times S \times S$. Therefore, the commutator acts as:
$$
[e(z), f(w)] = \text{multiplication by } \Delta_*(\gamma)
$$
for some $\gamma \in \kms$. Above, we abuse notation by writing $\Delta : \CM \times S \rightarrow \CM \times S \times S$ instead of the more appropriate notation $\text{Id}_{\CM} \times \Delta$. We note that $\Delta_*$ is injective, because it has a left-inverse given by projection onto the first component $S \times S \rightarrow S$. By applying the formula above to the unit class, we obtain:
\begin{equation}
\label{eqn:anto}
[e(z), f(w)] \cdot 1 = \Delta_*(\gamma)
\end{equation}
and it remains to prove that:
\begin{equation}
\label{eqn:remains}
\gamma = \frac {\delta \left(\frac zw \right)}{1-q^{-1}} \left[ \wedge^\bullet \left( \frac {\CU(q^{-1}-1)}z \right) - \wedge^\bullet \left( \frac {\CU(q^{-1}-1)}w \right) \right] 
\end{equation}
where the first wedge product is expanded in non-positive powers of $z$ and the second wedge product is expanded in non-negative powers of $w$. To prove \eqref{eqn:remains}, let us combine Definition \ref{def:tower} and Proposition \ref{prop:tower} with Proposition \ref{prop:general proj}:
\begin{align*}
&e(z) \cdot 1 = \wedge^\bullet \left(\frac {zq}{\CU}  \right) \Big|_{0 - \infty} = - \wedge^\bullet \left(\frac {zq}{\CU}  \right) \b \\
&f(w) \cdot 1 = \frac {\det \CU'}{(-w)^r q^{r-1}} \wedge^\bullet \left( - \frac {\CU'}w \right) \b = q^{1-r} \wedge^\bullet \left( - \frac w{\CU'} \right) \b
\end{align*}
(the line bundle $\CL$ on $\fZ$ is identified with $\CO(1)$ and $\CO(-1)$ in the projectivizations \eqref{eqn:tower 1} and \eqref{eqn:tower 2}, respectively, and then we apply \eqref{eqn:push taut gen 1} and \eqref{eqn:push taut gen 2}). Then:
$$
e(z)f(w) \cdot 1 = e(z) \cdot q^{1-r}\wedge^\bullet \left(- \frac w{\CU'} \right) \b
$$
Note that $e(z)$ is given by the correspondence $\fZ = \{\CF \subset \CF'\}$, with the sheaves $\CF$ and $\CF'$ associated to the target and the domain of the correspondence, respectively. Since $[\CU'] = [\CU] + [\CL_1] \cdot [\CO_{\Delta}]$ where $\CL_1$ is the tautological line bundle, we have:
$$
e(z)f(w) \cdot 1 = - q_{(2)}^{1-r} \wedge^\bullet \left( - \frac w{\CU_2} \right) \wedge^\bullet \left( - \frac wz \cdot \CO_\Delta^\vee \right) e(z) \cdot 1  \ \b = 
$$
\begin{equation}
\label{eqn:rr1}
= - q_{(2)}^{1-r} \wedge^\bullet \left(\frac {zq_{(1)}}{\CU_1}  \right) \wedge^\bullet \left( - \frac w{\CU_2} \right) \wedge^\bullet \left( - \frac zw \cdot \CO_\Delta \right) \b
\end{equation}
In the formula above, we write $\CU_1$ (respectively $\CU_2$) for the universal sheaves on the product $\CM \times S \times S$ that are pulled back from the product of the first and second (respectively first and third) factors. Similarly, $q_{(1)}$ and $q_{(2)}$ denote the canonical class $[\CK_S]$ pulled back from the first and second (respectively) factor of $S \times S$. 
By an analogous computation, we have:
\begin{equation}
\label{eqn:rr2}
f(w)e(z) \cdot 1 = - q_{(2)}^{1-r} \wedge^\bullet \left(\frac {zq_{(1)}}{\CU_1}  \right) \wedge^\bullet \left( - \frac w{\CU_2} \right) \wedge^\bullet \left( - \frac zw \cdot \CO_\Delta \right) \b
\end{equation}
Comparing the right-hand sides of \eqref{eqn:rr1} and \eqref{eqn:rr2}, one is tempted to conclude that the two expressions are equal. However, note that the order in which the operators are applied means that in \eqref{eqn:rr1} we first compute the residue in $w$ and then the residue in $z$, while in \eqref{eqn:rr2} we compute the residues in the opposite order. Therefore, the difference between \eqref{eqn:rr1} and \eqref{eqn:rr2} comes from the poles in $z/w$:
\begin{multline}
\label{eqn:dire}
[e(z), f(w)] \cdot 1 = \sum_{\alpha \text{ pole of } \zeta^S} \underset{x = \alpha}{\text{Res}} \ \zeta^S(x) \cdot \\ \delta \left(\frac z{w\alpha} \right) q_{(2)}^{1-r} \wedge^\bullet \left(\frac {zq_{(1)}}{\CU_1}  \right) \wedge^\bullet \left( - \frac w{\CU_2} \right)   \b
\end{multline}
Recall that $\zeta^S(x) = \wedge^\bullet( - x \cdot \CO_\Delta)$ was defined in \eqref{eqn:def zeta s}, while in \eqref{eqn:diag codim 2} we established that the only poles of $\zeta^S$ are $\alpha = 1$ and $\alpha = q^{-1}$. Moreover, the corresponding residue is a multiple of $[\CO_\Delta]$, so we may write $q_{(1)} = q_{(2)} = q$; thus, \eqref{eqn:dire} becomes:
$$
[e(z), f(w)] \cdot 1 = \delta \left(\frac zw \right) q^{1-r} \wedge^\bullet \left(\frac {w q}{\CU_1}  \right) \wedge^\bullet \left( - \frac w{\CU_2} \right)  \frac {[\CO_\Delta]}{q-1} \ \b - 
$$
$$
- \delta \left(\frac {zq}{w} \right) q^{1-r} \wedge^\bullet \left(\frac {w}{\CU_1}  \right) \wedge^\bullet \left( - \frac w{\CU_2} \right)   \frac {[\CO_\Delta]}{q-1}  \ \b
$$
Because of the factor $[\CO_\Delta]$, we may identify $\CU_1=\CU_2 = \CU$, and so the second line vanishes (a constant function does not have any poles between 0 and $\infty$). Therefore:
\begin{align*}
[e(z), f(w)] \cdot 1 &= \delta \left( \frac zw \right) q^{1-r } \frac {[\CO_\Delta]}{q-1} \wedge^\bullet \left(\frac {w(q-1)}{\CU}  \right) \b \\ &= \delta \left( \frac zw \right) \frac {[\CO_\Delta]}{1-q^{-1}} \wedge^\bullet \left(\frac {\CU(q^{-1}-1)}{w}  \right) \b
\end{align*}
which combined with \eqref{eqn:anto} implies \eqref{eqn:remains}.

\end{proof}

\subsection{}
\label{sub:collect}

Let us now put together the results of Propositions \ref{prop:comm rel 1}, \ref{prop:comm rel 2} and \ref{prop:comm rel 3}. Note that the commutation relations in question only depend on two pieces of data, namely:
$$
\zeta^S(x) = \wedge^\bullet(-x \cdot \CO_\Delta) \in \kss (x)
$$
and the class of the canonical bundle $q = [\CK_S] \in \ks$. With this in mind, we proved that the operators $e(z), f(z), h^\pm(z)$ of \eqref{eqn:def e}, \eqref{eqn:def f}, \eqref{eqn:def h} satisfy the relations:
\begin{equation}
\label{eqn:comm rel 1 intro}
\zeta^S \left( \frac wz \right) e(z)e(w) \ \ = \ \ \zeta^S \left(\frac zw \right) e(w)e(z)
\end{equation}
\begin{equation}
\label{eqn:comm rel 2 intro}
\zeta^S \left( \frac wz \right) e(z)h^\pm(w) = \zeta^S \left(\frac zw \right) h^\pm(w)e(z)
\end{equation}
\begin{equation}
\label{eqn:comm rel 3 intro}
[e(z), f(w)] = \delta \left( \frac zw \right) \Delta_* \left( \frac {h^+(z) - h^-(w)}{1-q^{-1}} \right)
\end{equation}
The algebra described by relations \eqref{eqn:comm rel 1 intro}, \eqref{eqn:comm rel 2 intro}, \eqref{eqn:comm rel 3 intro} will be the blueprint for the universal shuffle algebras we will define in Section \ref{sec:shuffle}. Let us show how the relations above restrict to the diagonal. To this end, recall that:
\begin{equation}
\label{eqn:restriction}
[\CO_\Delta] \Big|_\Delta = [\CO_S] - [\Omega_S^1] + [\CK_S] \in \ks
\end{equation}
where $\Delta : S \hookrightarrow S \times S$ denotes the diagonal. Therefore, it follows that:
$$
\zeta^S(x) \Big|_\Delta = \frac {(1-xa)(1-xb)}{(1-x)(1-xq)}
$$
where $a,b$ are the Chern roots of $\Omega_S^1$, and therefore $q = ab$. Thus we conclude that, after restricting to the diagonal, the algebra generated by $e(z), f(z), h^\pm(z)$ subject to relations \eqref{eqn:comm rel 1 intro}, \eqref{eqn:comm rel 2 intro}, \eqref{eqn:comm rel 3 intro} is nothing but the integral form (that is, over the ring $\ks$ instead of over the field $\BQ(a,b)$) of the Ding-Iohara-Miki algebra:
\begin{equation}
\label{eqn:dim 1}
(z-wa)(z-wb) \left(z - \frac wq \right) e(z)e(w) = \left( z - \frac wa \right) \left(z - \frac wb \right) (z - wq) e(w)e(z)
\end{equation}
\begin{equation}
\label{eqn:dim 2}
\frac {(z-wa)(z-wb)}{(z-w)(z-wq)} e(z)h^\pm(w) \ = \ \frac {(za-w)(zb-w)}{(z-w)(zq-w)} h^\pm(w)e(z)
\end{equation}
\begin{equation}
\label{eqn:dim 3}
[e(z), f(w)] = \delta \left( \frac zw \right) (1-a)(1-b) \left( \frac {h^+(z) - h^-(w)}{1-q^{-1}} \right)
\end{equation}
For background on this algebra, we refer the reader to \cite{DI}, \cite{FJMM}, \cite{FT}, \cite{M}. Note that much of the existing literature on the subject is done over the field $\BQ(a,b)$, which is quite different from our geometric setting, where the ring $\ks$ has zero-divisors. \\

\subsection{}
\label{sub:computations}

Let us now give explicit computations for the operators $e(z)$, $f(z)$, $h^\pm(z)$, under the additional Assumption B from \eqref{eqn:assumption b}. As an application, we will give a computational proof of Proposition \ref{prop:comm rel 1}. Consider the bilinear form:
\begin{equation}
\label{eqn:inner}
( \cdot, \cdot ) : \ks \otimes \ks \rightarrow \BZ \qquad \qquad (x, y) = \chi(S, x \otimes y)
\end{equation}
Since we assume the diagonal $\Delta \hookrightarrow S \times S$ is decomposable, it can be written as:
\begin{equation}
\label{eqn:diagonal}
[\CO_\Delta] = \sum_{i} l_i \boxtimes l^i \in \kss
\end{equation}
for some $l_i, l^i \in \ks$, where $a \boxtimes b$ refers to the exterior product $p_1^* a \otimes p_2^* b$. If \eqref{eqn:diagonal} happens, then $\{l_i\}$ and $\{l^i\}$ are dual bases with respect to the inner product \eqref{eqn:inner}. Moreover, according to Theorem 5.6.1 of \cite{CG}, we have Kunneth decompositions $\kss = \ks \boxtimes \ks$ and $K_{\CM \times S} = \km \boxtimes \ks$ which we will tacitly use from now on. \\

\begin{example}
\label{ex:basic plane}

When $S = \BP^2$, Beilinson considered the following resolution of the structure sheaf of the diagonal:
$$
0 \longrightarrow \CO(-2) \boxtimes \wedge^2 Q \longrightarrow \CO(-1) \boxtimes Q \longrightarrow \CO \boxtimes \CO \longrightarrow \CO_\Delta\longrightarrow 0
$$ 
where $\CO, \CO(-1), \CO(-2)$ are the usual line bundles on $\BP^2$, while $Q = \Omega_{\BP^2}^1(1)$ . Then:
\begin{equation}
\label{eqn:diag plane}
[\CO_\Delta] = [\CO] \boxtimes [\CO] - [\CO(-1)] \boxtimes [Q] + [\CO(-2)] \boxtimes [\wedge^2 Q]
\end{equation}
Alternatively, it is straightforward to check that $[\CO], - [Q], [\wedge^2 Q]$ is the dual basis to $[\CO], [\CO(-1)], [\CO(-2)]$ with respect to the bilinear form \eqref{eqn:inner}. 

\end{example}

\tab 
Assumption B also entails the fact that the universal bundle of the moduli space $\CM$ of stable = semistable sheaves on $S$ with invariants $(r,c_1)$ can be written as:
\begin{equation}
\label{eqn:kunneth}
[\CU] = \sum_i [\CT_i] \boxtimes l^i \ \quad \in \km \boxtimes \ks
\end{equation}
where $\CT_i$ are certain locally free sheaves on $\CM$. One may rewrite this relation as:
\begin{equation}
\label{eqn:taut from univ}
[\CT_i] = p_{1*} \left([\CU] \otimes p_2^* l_i \right) \qquad \in \km
\end{equation}
since $l_i$ and $l^i$ are dual bases with respect to the Euler characteristic pairing \eqref{eqn:inner}, and $p_1 : \CM \times S \rightarrow \CM$ and $p_2 : \CM \times S \rightarrow S$ denote the standard projections. \\

\begin{example}
\label{ex:monad plane}

Assume $S = \BP^2$ and $\gcd(r,c_1) = 1$, $-r < c_1 < 0$. The latter condition is more like a normalization than a restriction, as the moduli space remains unchanged under changing $c_1 \mapsto c_1+r$, which amounts to tensoring sheaves $\CF \mapsto \CF(1)$. As a consequence of this normalization, we have:
$$
H^j(\BP^2, \CF(-i)) = 0 \ \qquad \forall \ i \in \{0,1,2\}, \ j \in \{0,2\}
$$
for any stable sheaf $\CF$. Therefore, the derived direct images:
\begin{equation}
\label{eqn:taut 0}
\CT_i = R^1p_{1*}(\CU \otimes p_2^*\CO(-i)) \qquad \qquad \forall \ i \in \{0,1,2\}
\end{equation}
are locally free sheaves on $\CM$, whose fibers over a point $\CF$ are given by the cohomology groups $H^1(\BP^2,\CF(-i))$. Beilinson proved that there exists a monad:
\begin{equation}
\label{eqn:monad}
\CT_2 \boxtimes \wedge^2 Q \hookrightarrow \CT_1 \boxtimes Q \twoheadrightarrow \CT_0 \boxtimes \CO
\end{equation}
on $\CM \times S$, meaning a chain complex with the first map injective, the last map surjective, and the middle map having cohomology equal to the universal sheaf $\CU$. Therefore, we have the following explicit decomposition of the $K$--theory class of the universal sheaf in $\kms = \km \boxtimes \ks$:
\begin{equation}
\label{eqn:k explicit}
[\CU] = - [\CT_0] \boxtimes [\CO] + [\CT_1] \boxtimes [Q] - [\CT_2] \boxtimes [\wedge^2 Q]
\end{equation}
Compare \eqref{eqn:k explicit} with \eqref{eqn:diag plane}. Formula \eqref{eqn:k explicit} establishes the fact that Assumption B applies to $\BP^2$. Historically, monads were also used by Horrocks in a related context, and we refer the reader to \cite{LePot} or \cite{OSS} for more detailed background. \\

\end{example}

\subsection{} 

The $K$--theory classes of the locally free sheaves $\CT_i$ and their exterior powers are called \textbf{tautological classes}, and we will often abuse this terminology to refer to any polynomial in such classes. Products of tautological classes are well-defined in $K$--theory due to Proposition \ref{prop:length 1}, and we can therefore consider the groups:
\begin{equation}
\label{eqn:tautological 0}
K'_\CM = \bigoplus_{c_2 = \left \lceil \frac {r-1}{2r} c_1^2 \right \rceil}^\infty K'_{\CM_{(r,c_1,c_2)}}
\end{equation}
where $K'_{\CM_{(r,c_1,c_2)}} \subset K_{\CM_{(r,c_1,c_2)}}$ is the subgroup consisting of all classes:
\begin{equation}
\label{eqn:tautological}
\Psi(...,\CT_i,...)
\end{equation}
as $\Psi$ goes over all symmetric Laurent polynomials in the Chern roots of the locally free sheaves $\CT_i$. To be more specific, if $\CT_i$ has rank $r_i$, then we may formally write its $K$--theory class as $[\CT_i] = t_{i,1}+...+t_{i,r_i}$. Even though the individual $t_{i,j}$ are not well-defined $K$--theory classes, symmetric polynomials in them are. Therefore, we think of $\Psi(...,\CT_i,...)$ in \eqref{eqn:tautological} as being a function whose inputs are the Chern roots of all the sheaves $\CT_i$, and it is required to be symmetric for all $i$ separately. \\

\noindent Recall the dg scheme $\fZ$ defined in \eqref{eqn:tower 1}, the tautological line bundle $\CL$ of \eqref{eqn:tautological line}, and the short exact sequence \eqref{eqn:ses universal}. We have the following equality in $K_{\fZ \times S}$: 
$$
[\CU] = [\CU'] + [\CL] \cdot [\CO_{\Gamma}]
$$
where $\Gamma \subset \fZ \times S$ is the graph of the projection $p_S : \fZ \rightarrow S$. We abuse notation by using the notation $\CL$ both for the tautological line bundle on $\fZ$ and its pull-back to $\fZ \times S$. Moreover, we have the locally free sheaves $\CT_i$ and $\CT_i'$ on $\fZ$ that are pulled back from the spaces $\CM$ and $\CM'$, respectively, via the maps \eqref{eqn:projection maps}. Because of formula \eqref{eqn:taut from univ}, we have the following equality of $K$--theory classes on $\fZ$:
\begin{equation}
\label{eqn:taut on z}
[\CT_i] = [\CT'_i] + [\CL] \cdot p_S^*(l_i)
\end{equation}
for all $i$ in the indexing set \eqref{eqn:diagonal}. \\

\begin{lemma}
\label{lem:op taut}

In terms of the tautological classes \eqref{eqn:tautological}, we have:
\begin{align}
&e(z) \Psi(..., \CT_i,...) = \Psi(..., \CT_i' + z \cdot l_i,...) \prod_i \wedge^\bullet \left( \frac z{\CT_i' \boxtimes l^i q^{-1}} \right) \Big|_{0 - \infty} \label{eqn:lem e} \\
&f(z) \Psi(..., \CT_i',...) = \Psi(..., \CT_i - z \cdot l_i,...) \prod_i \wedge^\bullet \left( - \frac z{\CT_i \boxtimes l^i} \right) \b \cdot q^{1-r} \label{eqn:lem f}
\end{align}
where the expressions in the right-hand side take values in $\km \boxtimes \ks$, with the $\CT_i$ lying in the first tensor factor and $l_i, l^i, q$ in the second tensor factor. We recall the notation \eqref{eqn:frac notation}, where ``dividing" by a $K$--theory class means multiplying by its dual. \\ 

\end{lemma}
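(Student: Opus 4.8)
The plan is to prove \eqref{eqn:lem e} and \eqref{eqn:lem f} by unwinding the definitions \eqref{eqn:def e}, \eqref{eqn:def f} of the operators $e(z)$, $f(z)$ together with the projectivization descriptions of $\fZ$ from Definition \ref{def:tower} and Proposition \ref{prop:tower}, and then applying the explicit push-forward formulas of Subsection \ref{sub:push} (i.e. Proposition \ref{prop:general proj}). Concretely, for $e(z)$ the class $\Psi(\dots,\CT_i,\dots)$ gets pulled back from $\CM$ to $\fZ$ via $p_1$; using \eqref{eqn:taut on z} we have $p_1^*\CT_i = \CT_i' + \CL\cdot p_S^*(l_i)$ on $\fZ$, and under the identification $\CL = \CO(1)$ on $\fZ = \BP_{\CM'\times S}({\CU'}^\vee\otimes\CK_S[1])$ (Proposition \ref{prop:tower}) the variable $z$ tracks this $\CO(1)$ after integration. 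So the combination $\Psi(\dots,\CT_i' + z\cdot l_i,\dots)$ appears once we carry out $\i z^n(\dots)$, by the substitution principle built into \eqref{eqn:push taut gen 1}–type formulas. The remaining factor $\prod_i \wedge^\bullet(z/(\CT_i'\boxtimes l^i q^{-1}))$ should be exactly the $K$-theoretic Euler class contribution $\wedge^\bullet$ of the (shifted, dualized) bundle ${\CU'}^\vee\otimes\CK_S$ defining the projectivization, rewritten in the Künneth basis: since $[\CU'] = \sum_i[\CT_i']\boxtimes l^i$ by \eqref{eqn:kunneth}, one has $[{\CU'}^\vee\otimes\CK_S] = \sum_i [\CT_i'^\vee]\boxtimes [l^{i\vee} q]$, and the class controlling $p_{2S*}$ on the projective bundle is $\wedge^\bullet$ of $z$ times the relative (co)tangent-type bundle, which expands as the displayed product over $i$ after using $1/(\CT_i'\boxtimes l^i q^{-1}) = \CT_i'^\vee \boxtimes l^{i\vee} q$. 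The symbol $\b$ indicates the expansion \eqref{eqn:expand} around $\infty$ minus around $0$, matching the delta function $\delta(\CL/z)$ in \eqref{eqn:def e}.

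For \eqref{eqn:lem f} I would run the mirror argument: now $\Psi(\dots,\CT_i',\dots)$ is pulled back from $\CM'$, and on $\fZ$ we use $[\CT_i'] = [\CT_i] - [\CL]\cdot p_S^*(l_i)$ (rearranging \eqref{eqn:taut on z}), which after integrating $z$ against $\CL = \CO(\pm 1)$ produces $\Psi(\dots,\CT_i - z\cdot l_i,\dots)$. This time $\fZ = \BP_{\CM\times S}(\CU)$ is the presentation from Definition \ref{def:tower}, so the Euler-class factor is $\wedge^\bullet$ of $-z$ times the relevant bundle built from $\CU$ itself; using $[\CU] = \sum_i[\CT_i]\boxtimes l^i$ this becomes $\prod_i\wedge^\bullet(-z/(\CT_i\boxtimes l^i))$. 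The prefactor $\frac{(-z)^r}{\det\CU}$ in the definition \eqref{eqn:def f} of $f(z)$ must be absorbed: by \eqref{eqn:dual wedge} the normalization $\frac{(-z)^r}{\det\CU}\wedge^\bullet(\text{something involving }\CU)$ flips $\wedge^\bullet(\CU/z)$-type expressions into $\wedge^\bullet(-z/\CU)$-type expressions, which is precisely the bookkeeping that converts the "naive" push-forward into the clean form stated. I expect this sign/determinant normalization, together with keeping straight which of the two projective-bundle presentations (\eqref{eqn:tower 1} vs. \eqref{eqn:tower 2}) is in force and hence whether $\CL$ is $\CO(1)$ or $\CO(-1)$, to be the most error-prone bookkeeping, but conceptually routine.

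The main genuine obstacle is justifying the substitution $\CT_i \mapsto \CT_i' + z\cdot l_i$ inside an \emph{arbitrary} symmetric Laurent polynomial $\Psi$, rather than just for powers or exterior powers of a single $\CT_i$. The point is that $p_1^*[\CT_i] = [\CT_i'] + [\CL\otimes p_S^*l_i]$ as a sum of a pulled-back class and a line bundle twisted by a base class, so the Chern roots of $p_1^*\CT_i$ are the Chern roots of $\CT_i'$ together with the single root $z + (\text{root of } l_i)$ — wait, more precisely in $K$-theory the class $[\CL]\cdot p_S^*(l_i)$ contributes and $\Psi$ evaluated on roots is additive in the appropriate sense; one has to check that $\Psi(p_1^*\CT_\bullet)$, pushed forward along the projective bundle with the $\CO(1)$-weight recorded by $z$ via $\i z^n(-)$, equals $\Psi$ evaluated with $\CT_i$ replaced by $\CT_i' + z\, l_i$. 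This is a consequence of the projection formula plus the fact that $p_{2S*}$ on $\BP(\CN)$ applied to $\CO(j)\cdot(\text{base class})$ is computed by the residue formula of Proposition \ref{prop:general proj}, which is linear, so it suffices to verify the identity on a spanning set of symmetric polynomials (e.g. products of complete homogeneous or elementary symmetric functions in each $\CT_i$), where it reduces to the single-bundle exterior-power computations already encoded in Subsection \ref{sub:push}. I would state this reduction explicitly as the key lemma, prove it for $\wedge^\bullet(x\CT_i)$ by direct residue computation, and then invoke multiplicativity; the rest is assembling the factors and checking the expansion conventions $\b$.
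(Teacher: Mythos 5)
Your overall structure matches the paper's proof: pull back $\Psi$ along $p_1$, rewrite $p_1^*[\CT_i] = [\CT_i'] + [\CL]\cdot p_S^*(l_i)$ using \eqref{eqn:taut on z}, and push forward along the appropriate projective bundle structure of $\fZ$ via Proposition \ref{prop:general proj}. You also correctly match the two presentations of $\fZ$ to the two operators: \eqref{eqn:tower 2} with $\CL = \CO(-1)$ and \eqref{eqn:push taut gen 2} for $e(z)$, and \eqref{eqn:tower 1} with $\CL = \CO(1)$ and \eqref{eqn:push taut gen 1} for $f(z)$ — although note that in your second paragraph you briefly misstate $\CL = \CO(1)$ on $\BP_{\CM'\times S}({\CU'}^\vee\otimes\CK_S[1])$ when Proposition \ref{prop:tower} has $\CL = \CO(-1)$ there, a slip you later acknowledge.

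The one place where you take an unnecessarily circuitous route is what you call the ``main genuine obstacle'': justifying the substitution $\CT_i\mapsto\CT_i'+z\cdot l_i$ inside an \emph{arbitrary} $\Psi$, which you propose to handle by reducing to a spanning set of products of exterior powers and invoking multiplicativity. The paper disposes of this in one line using the fundamental property of the delta function, \eqref{eqn:fundamental property}: since $\delta(\CL/z)\,P(\CL) = \delta(\CL/z)\,P(z)$ for any Laurent polynomial $P$, the factor $\delta(\CL/z)$ already sitting inside the definition \eqref{eqn:def e} lets you replace $\CL$ by $z$ in $\Psi(\dots,\CT_i'+\CL\cdot p_S^*(l_i),\dots)$ \emph{before} pushing forward, for any $\Psi$ at once. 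Then $\Psi(\dots,\CT_i'+z\cdot l_i,\dots)$ is entirely pulled back along $p_{2S}$, so the projection formula pulls it out of $p_{2S*}$, and all that remains is $p_{2S*}\delta(\CL/z)$, computed directly by \eqref{eqn:push taut gen 2}. Your spanning-set reduction would likely succeed in the end, but it re-derives the delta-function identity case by case; invoking \eqref{eqn:fundamental property} outright is both cleaner and what the notation $\delta(\CL/z)$ in \eqref{eqn:def e}–\eqref{eqn:def f} is designed for.
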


\begin{proof} We will use the notation in \eqref{eqn:projection maps}. By definition, $e(z) \Psi(..., \CT_i,...)$ equals:
$$
p_{2S*} \left( \delta \left(\frac {\CL}z \right)  \Psi(..., \CT_i,...) \right) =  p_{2S*} \left( \delta \left(\frac {\CL}z \right)  \Psi(..., \CT'_i + \CL \cdot p_S^* (l_i) ,...) \right) 
$$
where the last equality is a consequence of \eqref{eqn:taut on z}. Using property \eqref{eqn:fundamental property} of the $\delta$ function and the fact that $\CT_i'$ and $l_i$ are both pulled back via $p_{2S}^*$, we have:
\begin{equation}
\label{eqn:pushy pushy}
e(z) \Psi(..., \CT_i,...) =  \Psi(..., \CT_i' + z \cdot l_i ,...) \cdot p_{2S*} \left( \delta \left(\frac {\CL}z \right) \right)
\end{equation}
To obtain the desired result, we must compute $p_{2S*}$ applied to the formal series $\delta \left(\frac {\CL}z \right)$. To this end, recall that the map $p_{2S*}$ is described as a projectivization in \eqref{eqn:tower 2}, and that the line bundle $\CL$ is the same as the Serre twisting sheaf $\CO(-1)$ with respect to the projectivization. Then formulas \eqref{eqn:tower 2} and \eqref{eqn:push taut gen 2} imply that:
$$
e(z) \Psi(..., \CT_i,...) = \Psi(..., \CT_i' + z \cdot l_i ,...) \cdot \wedge^\bullet \left(\frac z{\CU' \otimes p_S^*\CK^{-1}} \right) \Big|_{0 - \infty}
$$
Using \eqref{eqn:kunneth}, we obtain precisely \eqref{eqn:lem e}. Formula \eqref{eqn:lem f} is proved analogously.

\end{proof}

\subsection{} As a consequence of Lemma \ref{lem:op taut}, the operators $e(z)$ and $f(z)$ of \eqref{eqn:def e} and \eqref{eqn:def f} map $\km'$ to $\kms' = K_{\CM}' \boxtimes \ks$, where $K_{\CM}' \subset \km$ is the subgroup \eqref{eqn:tautological 0} of tautological classes. The same is clearly true for the operators $h^\pm(z)$ of \eqref{eqn:def h}. Subject to Assumption B, Propositions \ref{prop:comm rel 1}, \ref{prop:comm rel 2} and \ref{prop:comm rel 3} can be proved by direct computation, and we present the proof of the first of these below (the other two are analogous). However, note that we are only proving a weaker version of Proposition \ref{prop:comm rel 1}, because our argument only establishes formula \eqref{eqn:comm rel 1} for the restricted operators: 
$$
e(z) : \km' \rightarrow \kms' \pa{z}
$$

\begin{proof} \emph{of Proposition \ref{prop:comm rel 1} subject to Assumption B}: Formula \eqref{eqn:lem e} implies that:
$$
e(z) e(w) \Psi(..., \CT_i,...) = e(z) \left[ \Psi(..., \CT_i + w l_{i(2)},...) \prod_i \wedge^\bullet \left(\frac w{\CT_i \boxtimes l^i_{(2)}q_{(2)}^{-1}} \right) \right] = 
$$
$$
\Psi(..., \CT_i + z l_{i(1)} + w l_{i(2)},...) \prod_i \wedge^\bullet \left(\frac z{\CT_i \boxtimes l^i_{(1)}q_{(1)}^{-1}} \right) \wedge^\bullet \left(\frac {w}{\CT_i \boxtimes l^i_{(2)}q_{(2)}^{-1}} \right) \wedge^\bullet \left( \frac {wq}z \l_{i(1)}^\vee \boxtimes l^{i\vee}_{(2)} \right)
$$
In the right-hand side, $l_{i(1)}$ and $l_{i(2)}$ refer to the pull-backs to $\kss$ of the $K$--theory class $l_i \in \ks$ via the first and second projections, respectively. For brevity, we suppress the notation $|_{0 - \infty}$ in the right-hand side. The facts that $\wedge^\bullet$ is multiplicative and $\sum_i l_{i(1)} \boxtimes l^i_{(2)} = [\CO_\Delta] = q[\CO_\Delta^\vee]$ imply:
$$
\wedge^\bullet \left( \frac {wq}z \cdot \l_{i(1)}^\vee \boxtimes l^{i\vee}_{(2)} \right) = \wedge^\bullet \left(\frac {wq}z \cdot \CO_\Delta^\vee\right) = \wedge^\bullet \left(\frac wz \cdot \CO_\Delta \right)
$$
We thus conclude the following relation for the composition of the operators $e(z)$:
\begin{equation}
\label{eqn:double e}
e(z) e(w) \Psi(..., \CT_i,...) =
\end{equation}
$$
 = \Psi(..., \CT_i + z l_{i(1)} + w l_{i(2)},...) \prod_i \wedge^\bullet \left(\frac z{\CT_i \boxtimes l^i_{(1)} q_{(1)}^{-1}} \right)  \wedge^\bullet \left(\frac w{\CT_i \boxtimes l^i_{(2)} q_{(2)}^{-1}} \right) \wedge^\bullet \left( \frac wz \cdot \CO_\Delta \right)
$$
Comparing \eqref{eqn:double e} with the analogous formula for $z \leftrightarrow w$ and $1 \leftrightarrow 2$ implies \eqref{eqn:comm rel 1}.

\end{proof}

\subsection{}

Relation \eqref{eqn:double e} is an equality of formal series of operators. Taking integrals as in Subsection \ref{sub:convergence}, it allows us to obtain formulas for the operators \eqref{eqn:coeff series}:
$$
e_n = \int_{0 - \infty} z^n e(z) \quad : \quad \km' \rightarrow \kms'
$$
specifically:
$$
e_n \cdot \Psi(...,\CT_i,...) = \int_{0 - \infty}  z^{n} \Psi(..., \CT_i + z l_{i},...) \prod_i \wedge^\bullet \left( \frac {z}{\CT_i \boxtimes l^i q^{-1}} \right) 
$$
Composing two such relations in the variables $z_1$ and $z_2$, we obtain:
$$
e_{n_1} e_{n_2} \cdot \Psi(...,\CT_i,...) = \int_{0 - \infty}^{z_1 \prec z_2} z_1^{n_1} z_2^{n_2} \wedge^\bullet \left(\frac {z_2}{z_1} \cdot \CO_\Delta \right)
$$
$$
\Psi(..., \CT_i + z_1 l_{i(1)} + z_2 l_{i(2)},...) \prod_i \wedge^\bullet \left( \frac {z_1}{\CT_i \boxtimes l_{(1)}^i q_{(1)}^{-1}} \right) \wedge^\bullet \left( \frac {z_2}{\CT_i \boxtimes l_{(2)}^i q_{(2)}^{-1}} \right)
$$
where the notation $\int_{0 - \infty}^{z_1 \prec z_2}$ means that we take the residues at both 0 and $\infty$ first in $z_2$ and then in $z_1$ (intuitively, $z_2$ is closer to $0$ and $\infty$ than $z_1$). Iterating this computation implies that an arbitrary composition of $e_n$ operators is given by:
\begin{equation}
\label{eqn:non symmetric e}
e_{n_1}...e_{n_k} \cdot \Psi(...,\CT_i,...) = \int_{0 - \infty}^{z_1 \prec ... \prec z_k} z_1^{n_1} ... z_k^{n_k}\prod_{1 \leq i < j \leq k} \wedge^\bullet \left( \frac {z_j}{z_i} \cdot \CO_{\Delta_{ij}} \right)
\end{equation}
$$
\Psi(..., \CT_i + z_1 l_{i(1)} + ... + z_k l_{i(k)} ,...) \prod_i \wedge^\bullet \left( \frac {z_1}{\CT_i \boxtimes l_{(1)}^i q_{(1)}^{-1}} \right)...  \wedge^\bullet \left( \frac {z_k}{\CT_i \boxtimes l_{(k)}^i q_{(k)}^{-1}} \right)
$$
where $\Delta_{ij}$ is the codimension 2 diagonal in $S \times ... \times S$ corresponding to the $i$ and $j$ factors. In Remark \ref{rem:move contours} below, we will explain how to ensure that the $k$ contours that appear in \eqref{eqn:non symmetric e} can be moved around until they coincide. Once we do so, both the contours and the second line of \eqref{eqn:non symmetric e} will be symmetric with respect to the \textbf{simultaneous} permutation of the variables $z_1,...,z_k$ and the $k$ factors of $K_{S \times ... \times S}$. Therefore, the value of the integral is unchanged if we replace the rational function in $z_1,...,z_k$ on the first line of \eqref{eqn:non symmetric e} with its symmetrization:
\begin{equation}
\label{eqn:symmetrization}
\sym \ R(z_1,...,z_k) = \sum_{\sigma \in S(k)} (\sigma \cdot R)(z_{\sigma(1)},..., z_{\sigma(k)})
\end{equation}
for any rational function $R$ with coefficients in $K_{S \times ... \times S}$. Note that $\sigma \in S(k)$ acts on the coefficients of $R$ via permutation of the factors in $S \times ... \times S$. We obtain:
$$
e_{n_1}...e_{n_k} \cdot \Psi(...,\CT_i,...) = \frac 1{k!} \int^{z_1,...,z_k}_{\text{same contour}} \sym \left[ z_1^{n_1} ... z_k^{n_k}\prod_{1 \leq i < j \leq k} \wedge^\bullet \left( \frac {z_j}{z_i} \cdot \CO_{\Delta_{ij}} \right) \right]
$$
\begin{equation}
\label{eqn:symmetric e}
\Psi(..., \CT_i + z_1 l_{i(1)} + ... + z_k l_{i(k)} ,...) \prod_i \wedge^\bullet \left( \frac {z_1}{\CT_i \boxtimes l_{(1)}^i q_{(1)}^{-1}} \right)...  \wedge^\bullet \left( \frac {z_k}{\CT_i \boxtimes l_{(k)}^i q_{(k)}^{-1}} \right)
\end{equation}
All variables in \eqref{eqn:symmetric e} go over the same contour, which is specifically the difference of two circles, one surrounding $0$ and one surrounding $\infty$. As will be clear from Remark \ref{rem:move contours}, the definition of the integral in \eqref{eqn:symmetric e} is rather convoluted, and so it is useless for computations. However, it serves a very important purpose: because many rational functions in $z_1,...,z_k$ have the same symmetrization, one may use \eqref{eqn:symmetric e} to obtain linear relations between the various operators $e_{n_1}...e_{n_k}$. \\

\begin{remark}
\label{rem:move contours}

Let us explain how to change the contours from \eqref{eqn:non symmetric e} to those in \eqref{eqn:symmetric e}, or more specifically, we will show how to define the latter formula in order to match the former. Akin to \eqref{eqn:codim 2}, one can prove that:
$$
\wedge^\bullet(x \cdot \CO_\Delta) = 1 - \frac {[\CO_\Delta] \cdot x}{(1-xa)(1-xb)} 
$$
where $a$ and $b$ denote the Chern roots of $\Omega_S^1$. Therefore, let us think of $a$ and $b$ as formal variables, and note that the poles that involve $z_i$ and $z_j$ in \eqref{eqn:non symmetric e} are all of the form $z_i = a_{(i)} z_j$ or $z_i = b_{(i)} z_j$. Recall that the right-hand side of \eqref{eqn:non symmetric e} is an alternating sum of $2^k$ contour integrals. Let us focus on any one of these integrals: in it, some of the variables $z_i$ go around 0 and the other variables go around $\infty$. Call the former group of variables ``small" and the latter group ``big" for the given integral. Assume that the first line in the right-hand side of \eqref{eqn:non symmetric e} was replaced by:
\begin{equation}
\label{eqn:replacement}
z_1^{n_1} ... z_k^{n_k}  \prod^{i,j}_{\emph{small}} \Upsilon^{\esm}_{ij} \left( \frac {z_j}{z_i} \right) \prod^{i,j}_{\ebig} \Upsilon^{\ebig}_{ij} \left( \frac {z_j}{z_i} \right) \prod^{i \emph{ big}}_{j \emph{ small}} \Upsilon_{ij} \left( \frac {z_j}{z_i} \right) \prod^{j \emph{ big}}_{i \emph{ small}} \Upsilon_{ij} \left( \frac {z_j}{z_i} \right)
\end{equation}
where we define:
\begin{align*}
&\Upsilon_{ij}(x) = 1 - \frac {[\CO_{\Delta_{ij}}] \cdot x}{(1-xa_{(i)})(1-xb_{(i)})} \\
&\Upsilon^\esm_{ij}(x) = 1 - \frac {[\CO_{\Delta_{ij}}] \cdot x}{(1-xa^\esm_{(i)})(1-xb^\esm_{(i)})} \\
&\Upsilon^\ebig_{ij}(x) = 1 - \frac {[\CO_{\Delta_{ij}}] \cdot x}{(1-xa^\ebig_{(i)})(1-xb^\ebig_{(i)})}
\end{align*}
In the formula above, $a_{(i)}, b_{(i)}$ denote the Chern roots of $\Omega_S^1$ pulled back to the $i$--th factor of $S \times ... \times S$, while $a_{(i)}^\esm$, $a_{(i)}^\ebig$, $b_{(i)}^\esm$, $b_{(i)}^\ebig$ are complex parameters. We assume these parameters have absolute value $<1$ (those with superscript $\esm$) or $>1$ (those with superscript $\ebig$). Because of these assumptions, we may change the contours from \eqref{eqn:symmetric e} to \eqref{eqn:non symmetric e} without picking up any poles between the variables $z_i$ and $z_j$.

\tab  
Therefore, our prescription for defining \eqref{eqn:symmetric e} is the following: the right-hand side of relation \eqref{eqn:symmetric e} is an alternating sum of $2^k$ integrals, each of which corresponds to a partition of the set of variables $\{z_1,...,z_k\}$ into small and big variables. Replace the first line in the integrand of \eqref{eqn:symmetric e} by the expression \eqref{eqn:replacement}, and compute the integral by residues. \textbf{After} evaluating the integral, set the parameters $a_{(i)}^\ebig$, $a_{(i)}^\esm$ equal to $a_{(i)}$ and the parameters $b_{(i)}^\ebig$, $b_{(i)}^\esm$ equal to $b_{(i)}$. The integral thus defined is equal to \eqref{eqn:non symmetric e} because in the limit $z_1 \prec ... \prec z_k$, the value of the integral is a Laurent polynomial in $a_{(i)}$, $b_{(i)}$, and thus unaffected by the above procedure. \\

\end{remark}

\subsection{} Interpreting the composition $e_{n_1}...e_{n_k}$ as the integral of a symmetrization in \eqref{eqn:symmetric e} allows one to prove linear relations between these compositions, such as:
\begin{equation}
\label{eqn:funny stuff} 
[[e_{n+1}, e_{n-1}], e_n] \Big|_{\Delta} = 0 \qquad \qquad \forall \ n\in \BZ
\end{equation}
(see \cite{S} for the original context of this relation). Indeed, by \eqref{eqn:symmetric e}, relation \eqref{eqn:funny stuff} boils down to showing that:
\begin{equation}
\label{eqn:equ 1}
\sym \left[ z_1^n z_2^n z_3^n \left( \frac {z_1}{z_2} - \frac {z_2}{z_1} - \frac {z_2}{z_3} + \frac {z_3}{z_2} \right) \prod_{1\leq i < j \leq 3} \wedge^\bullet \left(\frac {z_j}{z_i} \cdot \CO_{\Delta_{ij}} \right) \right] \Big|_{\Delta} = 0
\end{equation}
The restriction of $[\CO_{\Delta_{ij}}]$ to the diagonal $\Delta \cong S \hookrightarrow S \times S \times S$ is given by \eqref{eqn:restriction}, and it is therefore independent of $i$ and $j$. Then \eqref{eqn:equ 1} is an immediate consequence of:
\begin{equation}
\label{eqn:equ 2}
\sym \left[ z_1^n z_2^n z_3^n \left( \frac {z_1}{z_2} - \frac {z_2}{z_1} - \frac {z_2}{z_3} + \frac {z_3}{z_2} \right) \prod_{1\leq i < j \leq 3} \frac {(z_i - z_j)(z_i - q z_j)}{(z_i - a z_j)(z_i - b z_j)} \right] = 0
\end{equation}
where $a$ and $b$ denote the Chern roots of $\Omega_S^1$. Equality \eqref{eqn:equ 2} is straightforward. \\

\begin{definition}
\label{def:full surface big}

Consider the abelian group:
$$
\CV_\ebig = \bigoplus_{k=0}^\infty K_{S \times ... \times S}(z_1,...,z_k)^\esym
$$
where the superscript $\esym$ means that we consider rational functions that are symmetric under the simultaneous permutation of the variables $z_i$ and the factors of the $k$--fold product $S \times ... \times S$. We endow $\CV_\ebig$ with the following associative product:
\begin{equation}
\label{eqn:gen shuffle product}
R(z_1,...,z_k) * R'(z_1,...,z_{k'}) = \frac 1{k! \cdot k'!} 
\end{equation}
$$
\esym \left[ (R \boxtimes 1^{\boxtimes k'}) (z_1,...,z_k) (1^{\boxtimes k} \boxtimes R') (z_{k+1},...,z_{k+k'}) \prod^{1 \leq i \leq k}_{1 \leq j \leq k'} \zeta^S_{ij} \left( \frac {z_i}{z_j} \right) \right]
$$
where:
$$
\zeta^S_{ij}(x) = \wedge^\bullet \left( - x \cdot \CO_{\Delta_{ij}} \right) \in K_{S \times ... \times S}(x)
$$
We call $\CV_\ebig$ the \textbf{big shuffle algebra} associated to $S$. \\

\end{definition}

\noindent In \eqref{eqn:gen shuffle product}, the rational functions $R$ and $R'$ have coefficients in the $K$--theory of $S^k$ and $S^{k'}$, respectively. We write $R \boxtimes 1^{\boxtimes k'}$ and $1^{\boxtimes k} \boxtimes R'$ for the pull-backs of these coefficients to the $K$--theory of $S^{k+k'}$ via the first and last projections, respectively. Then the second row of \eqref{eqn:gen shuffle product} is defined as the symmetrization with respect to all simultaneous permutations of the indices $z_1$,...,$z_{k+k'}$ and the $k+k'$ factors of $S^{k+k'}$. \\

\begin{definition}
\label{def:full surface small}

The subalgebra $\CV_\esm \subset \CV_\ebig$ is the $K_{S \times ... \times S}$--module generated by:
$$
z_1^{n_1} * ... * z_1^{n_k} \in \CV_\ebig 
$$
as $n_1,...,n_k$ go over $\BZ$. We call $\CV_\esm$ the \textbf{small shuffle algebra} associated to $S$. \\

\end{definition}

\begin{corollary}
\label{cor:act}

Under Assumption B, there is an action $\CV_\esm \curvearrowright \km'$ where:
\begin{equation}
\label{eqn:prescription}
\delta \left( \frac {z_1}z \right) \text{ acts as } e(z) \text{ of \eqref{eqn:def e}}
\end{equation}
The notion of ``action" refers to an abelian group homomorphism $\Phi$ given by:
$$
R(z_1,...,z_k) \in \CV_\esm \quad \stackrel{\Phi}\leadsto \quad \Phi(R) \in \emph{Hom} \left( K_{\CM}', K_{\CM \times S^k}' \right)
$$
satisfying: 
$$
\Phi(R * R') =  \left( \km' \xrightarrow{\Phi(R')} K_{\CM \times S^{k'}}' \xrightarrow{\Phi(R) \boxtimes \emph{Id}_{S^{k'}}} K_{\CM \times S^k \times S^{k'}}' \right)
$$
for all $R(z_1,...,z_k)$ and $R'(z_1,...,z_{k'})$ in $\CV_\esm$. \\

\end{corollary}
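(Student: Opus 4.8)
The plan is to read $\Phi$ off from the explicit formula \eqref{eqn:symmetric e}. Write $e_{\vec n} := e_{n_1} \circ \cdots \circ e_{n_k} \colon \km' \to K_{\CM \times S^k}'$ for the composition of the operators of \eqref{eqn:def e} (each $e_{n_i}$ creating a fresh copy of $S$, with $e_{n_1}$ landing in the first factor); these preserve the tautological subspaces by Lemma \ref{lem:op taut}. I would define $\Phi$ on the generators of $\CV_\esm$ by $\Phi(z_1^{n_1}*\cdots*z_1^{n_k}) := e_{\vec n}$, and then establish, first, that this is well defined and extends $K_{S^k}$-linearly to all of $\CV_\esm$, and second, that it is multiplicative as in the statement. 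Both facts will come from \eqref{eqn:symmetric e}.

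For well-definedness, the key observation is that \eqref{eqn:symmetric e} already expresses $e_{\vec n}\cdot\Psi$ as an integral that sees the tuple $(n_1,\dots,n_k)$ only through the shuffle product $z_1^{n_1}*\cdots*z_1^{n_k}\in\CV_\esm$. Indeed, for each pair $i<j$ one factors, using the multiplicativity of $\wedge^\bullet$,
\[
\wedge^\bullet\!\left(\tfrac{z_j}{z_i}\cdot\CO_{\Delta_{ij}}\right)=s_{ij}\cdot\zeta^S_{ij}\!\left(\tfrac{z_i}{z_j}\right),\qquad s_{ij}:=\wedge^\bullet\!\left(\tfrac{z_j}{z_i}\cdot\CO_{\Delta_{ij}}\right)\wedge^\bullet\!\left(\tfrac{z_i}{z_j}\cdot\CO_{\Delta_{ij}}\right),
\]
and $\prod_{i<j}s_{ij}$ is invariant under the simultaneous action of $S(k)$ on the variables $z_\bullet$ and on the factors of $S^k$. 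Together with the standard identity writing an iterated shuffle product of degree-one elements as a full symmetrization (which follows by induction from \eqref{eqn:gen shuffle product}), namely that $z_1^{n_1}*\cdots*z_1^{n_k}$ is a nonzero scalar multiple — depending only on $k$ — of $\sym[z_1^{n_1}\cdots z_k^{n_k}\prod_{i<j}\zeta^S_{ij}(z_i/z_j)]$, this turns \eqref{eqn:symmetric e} into
\[
e_{\vec n}\cdot\Psi=\textrm{const}\cdot\!\int^{z_1,\dots,z_k}_{\textrm{same contour}}\!\!\big(z_1^{n_1}*\cdots*z_1^{n_k}\big)(z_1,\dots,z_k)\cdot\CK(z_1,\dots,z_k),
\]
where the tautological kernel $\CK$ — the second line of \eqref{eqn:symmetric e} multiplied by $\prod_{i<j}s_{ij}$ — is $S(k)$-symmetric, and the contour is the one of Remark \ref{rem:move contours} (which is precisely what makes the integral of a symmetric rational function against $\CK$ well defined and equal to \eqref{eqn:non symmetric e}). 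The right-hand side is a fixed linear integral functional of the rational function $z_1^{n_1}*\cdots*z_1^{n_k}$, so, since by Definition \ref{def:full surface small} the degree-$k$ part of $\CV_\esm$ is the $K_{S^k}$-span of these elements, $\Phi$ extends consistently and $K_{S^k}$-linearly, with each $\Phi(R)$ an honest operator $K_\CM'\to K_{\CM\times S^k}'$. Taking $k=1$ and $R=\delta(z_1/z)=\sum_n z_1^n z^{-n}$ gives $\Phi(\delta(z_1/z))=\sum_n z^{-n}e_n=e(z)$, which is \eqref{eqn:prescription}.

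For multiplicativity, $\Phi(R*R')=(\Phi(R)\boxtimes\Id)\circ\Phi(R')$ holds on generators by a formal argument: associativity of the shuffle product gives $(z_1^{n_1}*\cdots*z_1^{n_k})*(z_1^{m_1}*\cdots*z_1^{m_{k'}})=z_1^{n_1}*\cdots*z_1^{n_k}*z_1^{m_1}*\cdots*z_1^{m_{k'}}$, hence $\Phi$ of it is $e_{n_1}\circ\cdots\circ e_{n_k}\circ e_{m_1}\circ\cdots\circ e_{m_{k'}}$, which equals $(e_{\vec n}\boxtimes\Id)\circ e_{\vec m}$ by associativity of composition of correspondences. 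For general $R=\sum_\alpha c_\alpha G_\alpha$, $R'=\sum_\beta d_\beta G'_\beta$ with $G_\alpha,G'_\beta$ generators, $c_\alpha\in K_{S^k}$, $d_\beta\in K_{S^{k'}}$, both sides are additive, so it suffices to treat one pair $(cG,dG')$: using the expansion $(cG)*(dG')=\sym[(c\boxtimes d)\,(G\boxtimes1)\,(1\boxtimes G')\prod_{i\le k<j}\zeta^S_{ij}(z_i/z_j)]$ from \eqref{eqn:gen shuffle product} together with the symmetry of $\CK$ and of the contour, the $z$-independent Künneth factor $c\boxtimes d$ passes outside both the symmetrization and the integral, giving $\Phi((cG)*(dG'))=(c\boxtimes d)\cdot\Phi(G*G')$; combined with the generator case and with the identity $(c\,\Phi(G)\boxtimes\Id)\circ(d\,\Phi(G'))=(c\boxtimes d)\cdot(\Phi(G)\boxtimes\Id)\circ\Phi(G')$ — valid because $c$ and $d$ act in coefficient factors untouched by the respective operators — this yields multiplicativity in general.

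The conceptual content is already contained in Proposition \ref{prop:comm rel 1} and formula \eqref{eqn:symmetric e}: no relation holds in $\CV_\esm$ that the operators $e_n$ do not already satisfy. The main obstacle will be bookkeeping, in two places. First, one must make precise — via the mechanism of Remark \ref{rem:move contours}, now applied across the two blocks of variables $\{z_1,\dots,z_k\}$ and $\{z_{k+1},\dots,z_{k+k'}\}$ — that the nested integrals produced by $(\Phi(R)\boxtimes\Id)\circ\Phi(R')$ can be realigned onto the common contour of \eqref{eqn:symmetric e} without crossing poles, and that the combinatorial prefactors match; the cross-terms that appear in this realignment are exactly those computed in the proof of Proposition \ref{prop:comm rel 1} under Assumption B (the derivation of \eqref{eqn:double e}), iterated. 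Second, the conversion of the $\CO_\Delta$-kernels of \eqref{eqn:symmetric e} into the $\zeta^S$-factors of \eqref{eqn:gen shuffle product}, up to the symmetric correction $\prod_{i<j}s_{ij}$, rests on the duality identities \eqref{eqn:dual wedge}, \eqref{eqn:funk 1}, \eqref{eqn:funk 2}. Neither point is deep, but both require care.
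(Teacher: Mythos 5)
Your proof is correct and follows the same approach as the paper's: from \eqref{eqn:symmetric e} one reads off that $e_{n_1}\cdots e_{n_k}$ acting on $\Psi\in\km'$ is an integral against a fixed $S(k)$--symmetric kernel in which the tuple $(n_1,\ldots,n_k)$ enters only through the rational function $z_1^{n_1}*\cdots*z_1^{n_k}$ divided by $\prod_{1\leq i\neq j\leq k}\zeta^S_{ij}(z_i/z_j)$, which makes $\Phi$ visibly $K_{S^k}$--linear and hence well defined on $\CV_\esm$. The paper's proof simply records this integral formula and stops there; your write-up additionally spells out the conversion of the $\wedge^\bullet(\frac{z_j}{z_i}\CO_{\Delta_{ij}})$ kernels into $\zeta^S_{ij}$--factors times a symmetric correction and carries out the multiplicativity check, both of which the paper leaves implicit (the latter being forced once $\Phi(z_1^{n_1}*\cdots*z_1^{n_k})=e_{n_1}\cdots e_{n_k}$ on generators).
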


\begin{proof} Formula \eqref{eqn:prescription} completely determines the action $\CV_\sm \curvearrowright \km'$, since \eqref{eqn:symmetric e} then requires that an arbitrary $R(z_1,...,z_k) \in \CV_\sm$  acts by sending $\Psi \in \km'$ to:
$$
\frac 1{k!} \int^{z_1,...,z_k}_{\text{same contour}} \frac {R(z_1,...,z_k)}{\prod_{1 \leq i \neq j \leq k} \zeta_{ij}^S(z_i/z_j)} \Psi(..., \CT_s + \sum_{i=1}^k z_i l_{s(i)} ,...) \prod_s \prod_{i=1}^k \wedge^\bullet \left( \frac {z_i}{\CT_s \boxtimes l_{(i)}^s q_{(i)}^{-1}} \right)
$$
$\in K_{\CM \times S^k}'$, where in the formula above, the index $s$ goes over the set $I$ that appears in \eqref{eqn:diagonal decomposes}. This formula completes the proof, since it shows that any linear relations one may have between the rational functions $R(z_1,...,z_k) \in \CV_\sm$ give rise to linear relations between the corresponding homomorphisms $\Phi(R) \in \Hom(\km', K_{\CM \times S^k}')$. 

\end{proof}






\section{The universal shuffle algebra}
\label{sec:shuffle}

\medskip

\subsection{}

The purpose of this Section is to construct a universal model for the algebras that appear in Definitions \ref{def:full surface big} and \ref{def:full surface small}. For any $k \in \BN$, consider the ring:
\begin{equation}
\label{eqn:fk}
\BF_k = \BZ \left[\Delta_{ij},q^{\pm 1}_i \right]_{1 \leq i \neq j \leq k} \Big /_{\Delta_{ij} = \Delta_{ji} \text{ and relation \eqref{eqn:rel}}}
\end{equation}
subject to the relation:
\begin{equation}
\label{eqn:rel}
\Delta_{ij} \cdot (\text{any expression anti-symmetric in }i,j) = 0
\end{equation}
for all indices $i$ and $j$. Note that we have the action of the symmetric group $S(k) \curvearrowright \BF_k$ given by permuting the indices in $\Delta_{ij}$ and $q_i$, and also the homomorphisms: 
\begin{equation}
\label{eqn:first}
\iota_{\text{first}}:\BF_k \hookrightarrow \BF_{k+k'}, \qquad \qquad \qquad \Delta_{ij} \mapsto \Delta_{ij}, \ \ \quad q_i \mapsto q_i
\end{equation}
\begin{equation}
\label{eqn:last}
\iota_{\text{last}}:\BF_{k'} \hookrightarrow \BF_{k+k'}, \quad \qquad \Delta_{ij} \mapsto \Delta_{i+k,j+k}, \quad q_i \mapsto q_{i+k}
\end{equation}
For all $i \neq j$, define the following rational function with coefficients in $\BF_k$:
\begin{equation}
\label{eqn:zeta}
\zeta_{ij}(x) = 1 + \Delta_{ij} \cdot \frac x{(1-x)(1-xq_i)}
\end{equation}
Note that we may replace $q_i$ by $q_j$ in the right-hand side, in virtue of \eqref{eqn:rel}. \\

\begin{definition}
\label{def:shuffle}

The \textbf{big universal shuffle algebra} is the abelian group:
$$
\CS_\ebig = \bigoplus_{k=0}^\infty \BF_k(z_1,...,z_k)^\esym
$$
where the superscript $\esym$ means that we consider rational functions that are symmetric under the simultaneous actions $S(k) \curvearrowright \{z_1,...,z_k\}$ and $S(k) \curvearrowright \BF_k$. We endow $\CS_\ebig$ with the shuffle product:
\begin{equation}
\label{eqn:shuffle product}
R(z_1,...,z_k) * R'(z_1,...,z_{k'}) = \frac 1{k! \cdot k'!}
\end{equation}
$$
\esym \left[ \iota_{\emph{first}}(R) (z_1,...,z_k) \iota_{\emph{last}}(R') (z_{k+1},...,z_{k+k'}) \prod^{1 \leq i \leq k}_{k+1 \leq j \leq k+k'} \zeta_{ij} \left( \frac {z_i}{z_j} \right) \right]
$$
Define the \textbf{small universal shuffle algebra}:
\begin{equation}
\label{eqn:small}
\CS_\esm \subset \CS_\ebig
\end{equation}
as the direct sum over $k \geq 0$ of the $\BF_k$--modules generated by the shuffle elements:
\begin{equation}
\label{eqn:spanning set}
z_1^{n_1} * ... * z_1^{n_k}
\end{equation} 
as $n_1,...,n_k$ range over $\BZ$. \\
\end{definition}

\subsection{}
\label{sub:spec surface}

The universal shuffle algebras above may be \textbf{specialized} to an arbitrary smooth surface $S$, by which we mean that we specialize the coefficient ring to:
\begin{equation}
\label{eqn:specialization}
\BF_k \mapsto K_{S \times ... \times S}, \qquad \Delta_{ij} \mapsto [\CO_{\Delta_{ij}}], \qquad q_i \mapsto [\CK_i]
\end{equation}
where recall that $\CO_{\Delta_{ij}}$ denotes the structure sheaf of the $(i,j)$--th codimension 2 diagonal, and $\CK_i$ denotes the canonical line bundle on the $i$--th factor of the $k$--fold product $S \times ... \times S$. The most basic specialization is when $S = \BA^2$ and we consider $K$--theory equivariant with respect to parameters $a$ and $b$. Then we have:
$$
\BF_k \mapsto K^{\BC^* \times \BC^*}_{\BA^2 \times ... \times \BA^2} = \BZ[a^{\pm 1}, b^{\pm 1}]
$$
with the trivial $S(k)$ action, and specialization $\Delta_{ij} \mapsto (1-a)(1-b)$, $q_i \mapsto q = ab$. In this case, the shuffle algebra reduces to the well-known construction studied in \cite{FHHSY}, \cite{Shuf} and other papers, which are all defined with respect to the rational function:
$$
\zeta^{\BA^2}_{ij}(x) = \frac {(1-xa)(1-xb)}{(1-x)(1-xq)}
$$
The next interesting case is $S = \BP^2$, in which case we have $K_S = \BZ[t]/(1-t)^3$, where $t$ denotes the class of $\CO(-1)$. Then the specialization in question is:
$$
\BF_k \mapsto \BZ[t_1,...,t_k] \Big /_{(1-t_1)^3 = ... = (1-t_k)^3 = 0}
$$
given by $\Delta_{ij} \mapsto 1 + t_i^2 t_j + t_i t_j^2 - 3t_it_j$ and $q_i \mapsto t_i^3$. Explicitly, the multiplication \eqref{eqn:shuffle product} in the shuffle algebra is defined with respect to the rational function:
$$
\zeta^{\BP^2}_{ij}(x) = \frac {(1-xt_it_j)^3}{(1-x)(1-xt_i^2t_j)(1-xt_it_j^2)}
$$
As a final example, let us consider the minimal resolution of the $A_n$ singularity:
\begin{equation}
\label{eqn:blow}
S = \widetilde{\BA^2 / \mu_{n+1}} \stackrel{\text{blow-up}}\longrightarrow \BA^2 / \mu_{n+1}
\end{equation}
and $\mu_{n+1}$ is the group of order $n+1$ roots of unity inside $\BC^*$, acting anti-diagonally on $\BA^2$. It is more convenient to present $S$ as a hypertoric variety, specifically:
\begin{equation}
\label{eqn:minimal}
S = \Big \{(z_1,...,z_n,w_1,...,w_n) \in \BA^{2n} \text{ s.t. } z_1w_1 = ... = z_n w_n \Big\}^\circ \Big / (\BC^*)^{n-1}
\end{equation}
where the circle $\circ$ denotes the open subset of points such that $(z_i,w_j) \neq (0,0)$ for all $i < j$. The gauge torus $(\BC^*)^{n-1}$ acts by determinant 1 diagonal matrices on $z_1,...,z_n$ and by the inverse matrices on $w_1,...,w_n$. We consider the action:
$$
\BC^* \times \BC^* \curvearrowright S \qquad (a,b) \cdot (z_1,...,z_n,w_1,...,w_n) = \left( \frac {z_1}a,..., \frac {z_n}a, \frac {w_1}b,..., \frac {w_n}b \right)
$$
We will abuse notation and also write $a$ and $b$ for the elementary characters of $\BC^* \times \BC^*$, which are dual to the variables in the formula above. It is known that the $K$--theory group of $S$ is generated by the line bundles $t^{(i)} = a \CO( - z_i) = b^{-1} \CO (w_i)$ (where $\CO(-z_i)$ denotes the line bundle corresponding to the hypersurface $\{z_i = 0\}$):
$$
K^{\BC^* \times \BC^*}_S = \BZ[a^{\pm 1}, b^{\pm 1}, t^{(1)},...,t^{(n)}] \Big /_{t^{(1)}...t^{(n)} = 1 \text{ and } \left(t^{(i)} - a \right) \left( t^{(j)} - b^{-1} \right) = 0 \ \forall i<j}
$$
We leave the following Proposition to the interested reader, which one can prove for example by computing the intersection pairing \eqref{eqn:inner} and applying \eqref{eqn:diagonal}: \\

\begin{proposition}
\label{prop:diag resolution}

The $K$--theory class of the diagonal $\Delta \hookrightarrow S \times S$ is given by:
\begin{equation}
\label{eqn:diag resolution}
[\CO_\Delta] =  (1+q) \sum_{i=1}^n s^{(i)} \boxtimes \frac 1{s^{(i)}} - a \sum_{i=1}^n s^{(i-1)} \boxtimes \frac 1{s^{(i)}} - b \sum_{i=1}^n s^{(i)} \boxtimes \frac 1{s^{(i-1)}}
\end{equation}
where we write $s^{(i)} = t^{(1)}...t^{(i)}$. Note that $s^{(0)} := 1$ is equal to $s^{(n)} := t^{(1)}...t^{(n)}$ since $z_1...z_n$ is a regular function on $S$, and therefore the sums in \eqref{eqn:diag resolution} are cyclic.

\end{proposition}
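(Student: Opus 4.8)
The plan is to follow the recipe pointed to in the statement: exhibit an $R$--basis of the equivariant $K$--theory $K^T_S$ (with $R = \BZ[a^{\pm 1}, b^{\pm 1}]$ and $T = \BC^* \times \BC^*$), compute the Gram matrix of the Euler pairing $(x, y) = \chi^T(S, x \otimes y)$ on that basis, invert it to obtain the dual basis, and substitute the result into the identity $[\CO_\Delta] = \sum_i l_i \boxtimes l^i$ of \eqref{eqn:diagonal}. Since $S$ is not compact, $\chi^T(S, -)$ must be \emph{defined} via equivariant localization: the surface $S$ of \eqref{eqn:minimal} is a smooth toric variety with finitely many (isolated) $T$--fixed points $p_0, \dots, p_{n-1}$, and one sets $\chi^T(S, \CF) = \sum_k \CF|_{p_k} / \wedge^\bullet(T^\vee_{p_k} S) \in \mathrm{Frac}(R)$. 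For the basis I would take $l_i = s^{(i)}$ with $i$ running over $\BZ/n\BZ$ (so $s^{(0)} = s^{(n)} = 1$); that these $n$ classes form an $R$--basis of $K^T_S$ is elementary from the presentation recalled just before the Proposition (for $n = 2$ the quadratic relation becomes $(t^{(1)} - a)(t^{(1)} - b) = 0$, so $\{1, t^{(1)}\}$ is visibly a basis, and the general case follows by normal--ordering monomials via $t^{(i)} t^{(j)} = b^{-1} t^{(i)} + a t^{(j)} - a b^{-1}$ for $i < j$ together with $t^{(1)} \cdots t^{(n)} = 1$). One should first note that \eqref{eqn:diagonal} remains valid in this non--compact equivariant setting: localization supplies the K\"unneth isomorphism $K^T(S \times S) \otimes \mathrm{Frac}(R) \cong K^T_S \otimes K^T_S \otimes \mathrm{Frac}(R)$ and non--degeneracy of $(\cdot, \cdot)$, so $[\CO_\Delta]$ is pinned down by its Euler pairings against the $l_i \boxtimes l_j$.

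The two inputs to the localization formula are read off from the toric fan of $S$ --- equivalently from the chain $C_1, \dots, C_{n-1}$ of exceptional $\BP^1$'s, or from the hypertoric moment map: the restriction $s^{(i)}|_{p_k} \in R$, a Laurent monomial in $a$ and $b$; and the pair of tangent weights at each $p_k$, giving $\wedge^\bullet(T^\vee_{p_k} S)$. These produce the Gram matrix $G_{ij} = (s^{(i)}, s^{(j)}) = \sum_k s^{(i)}|_{p_k} \, s^{(j)}|_{p_k} / \wedge^\bullet(T^\vee_{p_k} S)$, which I expect to emerge in cyclic band form --- forced by the fact that the exceptional curves meet in an $A$--type chain that closes into a cycle because of $t^{(1)} \cdots t^{(n)} = 1$. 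Inverting $G$ over $\mathrm{Frac}(R)$, writing the dual basis as $l^j = \sum_m (G^{-1})_{mj} \, s^{(m)}$, and assembling $[\CO_\Delta] = \sum_{j, m} (G^{-1})_{mj} \, s^{(j)} \boxtimes s^{(m)}$, one then uses $t^{(1)} \cdots t^{(n)} = 1$ and the quadratic relations to recast the second--slot classes (these relations identify $1/s^{(i)}$ with $t^{(i+1)} \cdots t^{(n)}$, and conversely express such products through the $s^{(m)}$), which should collapse the double sum into the three cyclic single sums of \eqref{eqn:diag resolution}, with $q = [\CK_S]$ pulled back from either factor. Heuristically, the answer is controlled by a two--parameter deformation of the affine $A_{n-1}$ Cartan matrix --- its affine node being exactly the cyclic closure $s^{(n)} = 1$ --- which is precisely what one expects to govern $[\CO_\Delta]$ for an $A$--type ALE space.

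The conceptual skeleton is short; the genuine work, and where mistakes are easy to make, is the equivariant bookkeeping --- fixing the orientation of the $T$--action so that the weights at the $p_k$ and the monomials $s^{(i)}|_{p_k}$ carry the right powers of $a$ versus $b^{-1}$ --- together with carrying out the (elementary but $n$--dependent) matrix inversion and tracking that, although each Gram entry lies only in $\mathrm{Frac}(R)$, the final combination is polynomial. A robust safeguard, and really an independent proof, is to bypass the inversion and verify \eqref{eqn:diag resolution} directly at fixed points: since $K^T(S \times S) \hookrightarrow \bigoplus_{k, l} \mathrm{Frac}(R)$ by restriction to the points $p_k \times p_l$, it is enough to check that the right--hand side of \eqref{eqn:diag resolution} restricts to $\wedge^\bullet(T^\vee_{p_k} S)$ when $k = l$ and to $0$ when $k \neq l$ --- which is exactly the fixed--point behaviour of $[\CO_\Delta]$, the diagonal being a smooth subvariety with normal bundle $TS$ that passes through $p_k \times p_k$ and misses $p_k \times p_l$ for $k \neq l$. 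This reduces the whole statement to a finite collection of Laurent--monomial identities. I would finish by confirming everything on the smallest case $n = 2$, $S = T^* \BP^1$, where all the pairings and the class $[\CO_\Delta]$ can be written out explicitly by hand.
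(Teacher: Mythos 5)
The paper does not carry out a proof; it only points the reader to ``computing the intersection pairing \eqref{eqn:inner} and applying \eqref{eqn:diagonal},'' and your main plan fleshes out exactly that route (with the necessary caveat, which you correctly flag, that $\chi^T(S,-)$ must be defined by equivariant localization since $S$ is non-compact, and that \eqref{eqn:diagonal} persists in the $\mathrm{Frac}(R)$--localized setting). Your fallback---checking the proposed formula directly against $[\CO_\Delta]|_{p_k\times p_l}$, which must equal $\wedge^\bullet(T^\vee_{p_k}S)$ on the diagonal and $0$ off it---is a cleaner, self-contained variant that avoids inverting the Gram matrix and also serves to confirm that $\{s^{(i)}\}_{i\in\BZ/n}$ is indeed a basis; either route is sound and consistent with the paper's intent.
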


\tab 
We conclude that the specialization of the universal shuffle algebra to the minimal resolution of the $A_n$ singularity involves setting:
$$
\BF_k \mapsto \BZ[a^{\pm 1}, b^{\pm 1},...,t^{(i)}_l,...]^{1\leq i \leq n}_{1\leq l \leq k} \Big /_{t^{(1)}_l...t^{(n)}_l = 1 \text{ and } \left(t^{(i)}_l - a \right) \left( t^{(j)}_l - b^{-1} \right) = 0 \ \forall i<j \ \forall l}
$$
as well as $q_i \mapsto q = ab$ and:
$$
\Delta_{ij} =  (1+q) \sum_{l=1}^n \frac {s^{(l)}_i}{s^{(l)}_j} - a \sum_{l=1}^n \frac {s^{(l-1)}_i}{s^{(l)}_j} - b \sum_{l=1}^n \frac {s^{(l)}_i}{s^{(l-1)}_j}
$$
with $s^{(l)}_i = t^{(1)}_i...t^{(l)}_i$. \\


\subsection{}

Going back to the universal shuffle algebras of Definition \ref{def:shuffle}, it is a very good problem to describe the subalgebra $\CS_\sm \subset \CS_\big$ explicitly. The only full description one has is in the specialization $S = \BA^2$, in which case we showed in \cite{Shuf} that the wheel conditions of \cite{FO} are necessary and sufficient to describe elements of the small shuffle algebra. In general, we now prove a necessary condition: \\

\begin{proposition}
\label{prop:polynomials surface}

Elements of the small shuffle algebra $\CS_\esm$ are of the form:
\begin{equation}
\label{eqn:wheel poly surface}
\frac {r(z_1,...,z_k)}{\prod_{1\leq i \neq j \leq k} (z_i - z_j q_j)}
\end{equation}
where $r$ is a Laurent polynomial with coefficients in $\BF_k$, which is symmetric with respect to the simultaneous actions $S(k) \curvearrowright \{z_1,...,z_k\}$ and $S(k) \curvearrowright \BF_k$.

\end{proposition}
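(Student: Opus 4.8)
The plan is to induct on $k$, using the shuffle product formula \eqref{eqn:shuffle product} together with the fact that every element of $\CS_\sm$ is an $\BF_k$-linear combination of the spanning shuffles $z_1^{n_1} * \cdots * z_1^{n_k}$. The key point is that the denominator $\prod_{i \neq j}(z_i - z_j q_j)$ is exactly what arises from expanding the factors $\zeta_{ij}(z_i/z_j)$ that appear in the shuffle product: indeed, by \eqref{eqn:zeta} we have
\begin{equation}
\label{eqn:zeta factored}
\zeta_{ij}\!\left(\frac{z_i}{z_j}\right) = \frac{(z_i - z_j)(z_i q_i - z_j) + \Delta_{ij}\, z_i z_j}{(z_i - z_j)(z_i - z_j q_i)},
\end{equation}
whose numerator is a polynomial in $\BF_k[z_i^{\pm1},z_j^{\pm1}]$ and whose denominator only contributes factors of the form $(z_i - z_j)$ and $(z_i - z_j q_i)$.

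First I would establish the base case $k=1$, where $\CS_{\sm}$ in degree $1$ consists of $\BF_1$-linear combinations of $\{z_1^n\}_{n\in\BZ}$, i.e.\ Laurent polynomials in $z_1$ with coefficients in $\BF_1 = \BZ[q_1^{\pm1}]$, and the empty product in the denominator of \eqref{eqn:wheel poly surface} is $1$, so the claim is trivial. For the inductive step, it suffices to treat a product $R * R'$ where $R \in \CS_\sm$ has $k$ variables and $R'$ has $k'$ variables, both of the asserted form by induction. Write $R = r/\prod_{1\le i\ne j\le k}(z_i - z_j q_j)$ and similarly for $R'$. Plugging these into \eqref{eqn:shuffle product} and using \eqref{eqn:zeta factored} for each of the $k k'$ crossing factors $\zeta_{ij}(z_i/z_j)$ with $1\le i\le k < j$, the expression inside the $\sym$ becomes a ratio whose numerator is a Laurent polynomial in $\BF_{k+k'}[z_1^{\pm1},\dots,z_{k+k'}^{\pm1}]$ and whose denominator is a product of factors $(z_i - z_j q_j)$ ranging over pairs $i\ne j$ — specifically the pairs internal to the first block, the pairs internal to the second block, and the crossing pairs with $i < j$ (from the denominators in \eqref{eqn:zeta factored}). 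The only subtlety is that \eqref{eqn:zeta factored} contributes $(z_i - z_j)$ and $(z_i - z_j q_i)$ rather than $(z_i - z_j q_j)$; but modulo the relation \eqref{eqn:rel} the factor $\Delta_{ij} z_i z_j$ in the numerator kills the antisymmetric part, and one checks (this is the computation underlying \eqref{eqn:diag codim 2}) that $\zeta_{ij}(z_i/z_j)$ can equally be written with denominator $(z_i - z_j)(z_i - z_j q_j)$. Thus before symmetrizing we have a function of the required shape, but only with the \emph{partial} set of denominator factors (crossing pairs counted once, not twice).

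The main obstacle — and the step requiring the most care — is the symmetrization. After applying $\sym$ over $S(k+k')$, a single summand with denominator missing the factor $(z_i - z_j q_j)$ for, say, a crossing pair gets averaged with its transpose, which has the complementary factor $(z_j - z_i q_i)$; clearing to the common denominator $\prod_{1\le i\ne j\le k+k'}(z_i - z_j q_j)$ one must verify that the combined numerator remains a \emph{Laurent polynomial}, i.e.\ that the spurious poles introduced by clearing denominators actually cancel. This is where one uses that $R$ and $R'$ were already of the asserted form (so their denominators are the full antisymmetric-type products on their respective blocks) together with the identity \eqref{eqn:zeta factored} and the relation \eqref{eqn:rel}: the potential pole of $\sym[\,\cdots]$ along $z_i = z_j q_i$ for a crossing pair is cancelled because the residue there is proportional to $\Delta_{ij}$ times an antisymmetric expression, hence zero in $\BF_{k+k'}$. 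Concretely, I would argue that $\prod_{1\le i\ne j\le k+k'}(z_i - z_j q_j)$ times the symmetrized expression is manifestly symmetric, manifestly a rational function with no poles except possibly along the hyperplanes $z_a = z_b q_b$, and has vanishing residue along each such hyperplane by the $\Delta$-antisymmetry mechanism; a symmetric rational function with no poles is a Laurent polynomial, giving the claimed numerator $r$. Finally, the symmetry of $r$ under the simultaneous $S(k+k')$-action on variables and on $\BF_{k+k'}$ is inherited from the $\sym$ operation, since the full denominator $\prod_{i\ne j}(z_i - z_j q_j)$ is itself invariant under that simultaneous action.
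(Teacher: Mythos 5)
The overall strategy is sound and is the same mechanism the paper uses: after clearing denominators, the spurious simple poles that $\zeta_{ij}$ creates along $z_i = z_j$ must vanish, and they do because the numerator at $z_i = z_j$ acquires an overall factor $\Delta_{ij}$, which by relation \eqref{eqn:rel} kills the anti-invariant part. However, there are two genuine problems in your write-up that would derail the argument as stated.

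First, the displayed identity \eqref{eqn:zeta factored} is not correct. Starting from \eqref{eqn:zeta}, one finds
\[
\zeta_{ij}\!\left(\frac{z_i}{z_j}\right) = 1 + \frac{\Delta_{ij}z_iz_j}{(z_j-z_i)(z_j-z_iq_i)} = \frac{(z_j-z_i)(z_j-z_iq_i) + \Delta_{ij}z_iz_j}{(z_j-z_i)(z_j-z_iq_i)},
\]
and you may indeed replace the denominator by $(z_i-z_j)(z_i-z_jq_i)$ \emph{in the $\Delta_{ij}$-summand} (since $\Delta_{ij}z_iz_j(z_i-z_j)(z_i+z_j)(1-q_i)=0$ by \eqref{eqn:rel}), but then the numerator must change accordingly to $(z_i-z_j)(z_i-z_jq_i) + \Delta_{ij}z_iz_j$. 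Your numerator $(z_i-z_j)(z_iq_i-z_j)$ is the polynomial $(z_j-z_i)(z_j-z_iq_i)$, which is not equal to $(z_i-z_j)(z_i-z_jq_i)$; as written, the right-hand side of \eqref{eqn:zeta factored} is not $\zeta_{ij}(z_i/z_j)$, and one can check that the difference is not killed by the $\Delta$-relations.

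Second, and more consequentially, you have the spurious and genuine poles interchanged. After multiplying $\sym[\cdots]$ by $\prod_{a\ne b}(z_a - z_bq_b)$, the poles along $z_a = z_bq_b$ are \emph{cleared}, not created; they are the genuine poles of elements of $\CS_\sm$ and are not supposed to vanish. The poles that remain, and that one must show have vanishing residue, lie along the Vandermonde hyperplanes $z_a = z_b$; these come from the $(z_j - z_i)$ factor in the denominator of $\zeta_{ij}(z_i/z_j)$. Your sentence claiming that the residue along $z_i = z_jq_i$ is killed by the $\Delta$-mechanism is therefore aimed at the wrong locus (and in fact, by the trick you yourself invoke, the factor $(z_i - z_jq_i)$ is interchangeable with $(z_i-z_jq_j)$, which is one of the factors in the claimed denominator, so that pole is genuine). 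Once these two items are corrected, the $\Delta_{12}$-cancellation you describe is exactly the content of the paper's proof, which dispenses with the induction and directly writes $z_1^{n_1}*\cdots*z_1^{n_k}$ as $\rho(z_1,\ldots,z_k)/\big(\prod_{i<j}(z_j-z_i)\prod_{i\ne j}(z_i-z_jq_j)\big)$ with $\rho$ a Laurent polynomial, and then verifies $\rho(s,s,z_3,\ldots,z_k) = 0$ by pairing the $\sigma$ and $\sigma\circ(12)$ summands using \eqref{eqn:rel}; if you wish to keep your inductive structure, this $z_1=z_2$ residue computation is the step you must carry out explicitly.
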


\tab 
In other words, Proposition \ref{prop:polynomials surface} claims that despite the fact that the rational function $\zeta_{ij}(z_i/z_j)$ of \eqref{eqn:zeta} produces simple poles at $z_i - z_j$, such poles disappear for any element in $\CS_\sm$. This statement is not trivial. While it is true that any symmetric rational function in $z_1,...,z_k$ with constant coefficients and at most simple poles at $z_i-z_j$ is regular, this fails if the symmetric group also acts on the coefficients, e.g.:
$$
\sym \left( \frac {z_1q_1}{z_1 - z_2} \right) = \frac {z_1q_1}{z_1 - z_2} + \frac {z_2q_2}{z_2 - z_1} = \frac {z_1q_1 - z_2q_2}{z_1 - z_2}
$$

\begin{proof} By the very definition of the subalgebra $\CS_\sm$, it is enough to check the claim in the Proposition for the element $R(z_1,...,z_k) = z_1^{n_1} * ... * z_1^{n_k}$. By \eqref{eqn:shuffle product}, we have:
$$
R(z_1,...,z_k) = \sym \left[ z_1^{n_1}... z_k^{n_k} \prod_{1\leq i < j \leq k} \left(1 + \frac {\Delta_{ij} z_iz_j}{(z_j-z_i)(z_j - z_i q_i)} \right) \right] = 
$$
$$
= \sum_{\sigma \in S(k)} z_1^{n_{\sigma(1)}}...z_k^{n_{\sigma(k)}} \prod_{\sigma(i) < \sigma(j)} \left(1 + \frac {\Delta_{ij}z_iz_j}{(z_j-z_i)(z_j - z_iq_i)} \right)
$$
By clearing denominators, we see that $R(z_1,...,z_k)$ has the form \eqref{eqn:wheel poly surface}, with:
$$
r(z_1,...,z_k) = \frac {\rho(z_1,...,z_k)}{\prod_{1 \leq i < j \leq k} (z_j - z_i)}
$$
where:
$$
\rho(z_1,...,z_k) = \sum_{\sigma \in S(k)} \text{sgn}(\sigma) z_1^{n_{\sigma(1)}}...z_k^{n_{\sigma(k)}} \prod_{\sigma(i) < \sigma(j)} \Big[ (z_j-z_i)(z_j - z_iq_i) + \Delta_{ij}z_iz_j \Big] = 
$$
$$
= \sum_{\sigma \in S(k)} \text{sgn}(\sigma) z_1^{n_{\sigma(1)}}...z_k^{n_{\sigma(k)}} \prod_{i<j} \Big[ (z_j-z_i)(z_j q_j^{\delta_{\sigma(i) > \sigma(j)}} - z_iq_i^{\delta_{\sigma(i) < \sigma(j)}}) + \Delta_{ij}z_iz_j \Big]
$$
The Kronecker delta symbol $\delta_{i>j}$ takes value 1 if $i>j$ and 0 if $i<j$. To prove Proposition \ref{prop:polynomials surface}, it is enough to show that $z_2 - z_1$ divides the expression on the second line, or more specifically, that this expression vanishes when we set $z_1 = z_2 = s$:
\begin{equation}
\label{eqn:rho}
\rho(s,s,z_3,...,z_k) = \sum_{\sigma \in S(k)} \text{sgn}(\sigma) s^{n_{\sigma(1)}+ n_{\sigma(2)}} z_3^{n_{\sigma(3)}}...z_k^{n_{\sigma(k)}} \Delta_{12} z_1 z_2 \prod_{i=3}^k 
\end{equation}
$$
\Big[ (z_i-s)(z_i q_i^{\delta_{\sigma(1) > \sigma(i)}} - sq_1^{\delta_{\sigma(1) < \sigma(i)}}) + \Delta_{1i}sz_i \Big]  \Big[ (z_i-s)(z_i q_i^{\delta_{\sigma(2) > \sigma(i)}} - sq_2^{\delta_{\sigma(2) < \sigma(i)}}) + \Delta_{2i}sz_i \Big] ...
$$
where $...$ is a placeholder for terms that only involve $z_i$ with $i\geq 3$. Expression \eqref{eqn:rho} vanishes because the factor $\Delta_{12}$ on the first line ensures that:
$$
\Delta_{12} \Big[ (z_i-s)(z_i q_i^{\delta_{\sigma(1) > \sigma(i)}} - sq_1^{\delta_{\sigma(1) < \sigma(i)}}) + \Delta_{1i}sz_i \Big]  \Big[ (z_i-s)(z_i q_i^{\delta_{\sigma(2) > \sigma(i)}} - sq_2^{\delta_{\sigma(2) < \sigma(i)}}) + \Delta_{2i}sz_i \Big] = 
$$
$$
= \Delta_{12} \Big[ (z_i-s)(z_i q_i^{\delta_{\sigma(2) > \sigma(i)}} - sq_1^{\delta_{\sigma(2) < \sigma(i)}}) + \Delta_{1i}sz_i \Big]  \Big[ (z_i-s)(z_i q_i^{\delta_{\sigma(1) > \sigma(i)}} - sq_2^{\delta_{\sigma(1) < \sigma(i)}}) + \Delta_{2i}sz_i \Big]
$$
(the above equality is simply a particular application of \eqref{eqn:rel}) and so the summand of \eqref{eqn:rho} for any permutation $\rho$ cancels the corresponding summand for $\rho \circ (12)$.

\end{proof}

\subsection{}
\label{sub:full curve}

For the remainder of this Section, we will describe the analog of the universal shuffle algebras $\CS_\sm \subset \CS_\big$ when the surface is replaced by a curve (however, we make no claims about moduli spaces of stable sheaves on curves). In this case, the diagonal is a divisor and therefore:
$$
\Delta \hookrightarrow C \times C \quad \text{has }K\text{--theory class } \quad [\CO_\Delta] = 1 - (\text{line bundle}) \in K_{C \times C}
$$
Therefore, the universal coefficient ring will be defined as:
\begin{equation}
\label{eqn:fk curve}
\BF_k = \BZ[\Delta_{ij}]_{1\leq i \neq j \leq k} \Big /_{\Delta_{ij} = \Delta_{ji} \text{ and relation }\eqref{eqn:rel}} 
\end{equation}
together with the action $S(k) \curvearrowright \BF_k$ and the homomorphisms \eqref{eqn:first} and \eqref{eqn:last}. The $\zeta$ function of \eqref{eqn:zeta} will be replaced by its analogue for a curve:
\begin{equation}
\label{eqn:zeta curve}
\zeta_{ij}(x) = 1 + \Delta_{ij} \cdot \frac x{1-x}
\end{equation}
$$$$

\begin{definition}
\label{def:full curve}

Define the big and small universal shuffle algebras $\CS_\esm \subset \CS_\ebig$ by the formulas in Definition \ref{def:shuffle}, with \eqref{eqn:fk} and \eqref{eqn:zeta} replaced by \eqref{eqn:fk curve} and \eqref{eqn:zeta curve}.

\end{definition}

\tab 
As before, a \textbf{specialization} of the shuffle algebra $\CS_\big$ to a particular smooth curve $C$ will refer to the specialization of its underlying coefficient ring:
\begin{equation}
\label{eqn:spec curve}
\BF_k \mapsto K_{C \times ... \times C}, \qquad \qquad \Delta_{ij} \mapsto [\CO_{\Delta_{ij}}] \in K_{C \times ... \times C}
\end{equation}
The most basic specialization is $C = \BA^1$. It is not projective, so in order to place it in the above framework, we must replace its usual $K$--theory ring by the equivariant $K$--theory ring $K^{\BC^*}_{\BA^1} = \BZ[q^{\pm 1}]$. Explicitly, the specialization \eqref{eqn:spec curve}  is given by:
$$
\BF_k \mapsto K^{\BC^*}_{\BA^1 \times ... \times \BA^1} = \BZ[q^{\pm 1}], \qquad \Delta_{ij} \mapsto 1 - q
$$
By \eqref{eqn:small}--\eqref{eqn:spanning set}, the corresponding specialization of $\CS_\sm$ is spanned over $\BZ[q^{\pm 1}]$ by:
$$
z_1^{n_1} * ... * z_1^{n_k} = \sym \left[z_1^{n_1}...z_k^{n_k} \prod_{1\leq i < j \leq k } \frac {z_j - q z_i}{z_j - z_i} \right]
$$
When $n_1 \geq ... \geq n_k$, the right-hand side of the above expression yields (up to a constant) the well-known Hall-Littlewood polynomials. Thus $\CS_\sm$ is an integral form of the ring of symmetric polynomials in arbitrarily many variables over $\BQ(q)$. A similar phenomenon holds in the universal setting of Definition \ref{def:full curve}: \\

\begin{proposition}
\label{prop:polynomials}

Elements of the small shuffle algebra $\CS_\esm$ of Definition \ref{def:full curve} are symmetric Laurent polynomials in $z_1,...,z_k$ and $\{\Delta_{ij}\}_{1\leq i \neq j \leq k}$. \\

\end{proposition}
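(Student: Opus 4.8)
The plan is to mimic the proof of Proposition \ref{prop:polynomials surface}, but in the simpler curve setting, and then to upgrade ``rational function with only simple poles at $z_i = z_j$'' to ``honest Laurent polynomial'' by exploiting the fact that the curve $\zeta$-function \eqref{eqn:zeta curve} has no pole at $x = q^{-1}$ (there is no second factor $(1 - x q_i)$ in the denominator). Concretely, it suffices by definition of $\CS_\esm$ to check the claim for a generating element $R(z_1,\dots,z_k) = z_1^{n_1} * \cdots * z_1^{n_k}$. Unwinding the shuffle product \eqref{eqn:shuffle product} with $\zeta_{ij}(x) = 1 + \Delta_{ij} \frac{x}{1-x} = \frac{(z_j - z_i) + \Delta_{ij} z_i}{z_j - z_i}$ (writing $x = z_i/z_j$ and clearing the common denominator), we get
$$
R(z_1,\dots,z_k) = \sum_{\sigma \in S(k)} z_1^{n_{\sigma(1)}} \cdots z_k^{n_{\sigma(k)}} \prod_{\sigma(i) < \sigma(j)} \frac{(z_j - z_i) + \Delta_{ij} z_i}{z_j - z_i} = \frac{\rho(z_1,\dots,z_k)}{\prod_{1 \le i < j \le k}(z_j - z_i)},
$$
where
$$
\rho(z_1,\dots,z_k) = \sum_{\sigma \in S(k)} \mathrm{sgn}(\sigma)\, z_1^{n_{\sigma(1)}} \cdots z_k^{n_{\sigma(k)}} \prod_{\sigma(i) < \sigma(j)} \Big[ (z_j - z_i) + \Delta_{ij} z_i \Big].
$$
Here I have used that reordering the product over pairs with $\sigma(i) < \sigma(j)$ into the product over $i < j$ introduces the sign of $\sigma$ together with a rewriting of each factor $(z_j - z_i) + \Delta_{ij} z_i$ into either itself or $(z_i - z_j) + \Delta_{ij} z_j$ depending on whether $\sigma(i) < \sigma(j)$ or not; I would spell this bookkeeping out exactly as in Proposition \ref{prop:polynomials surface}, but the point is only that $\rho$ is a Laurent polynomial in the $z$'s with coefficients in $\BF_k$.

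The heart of the argument is then to show $\rho$ is divisible by $\prod_{i<j}(z_j - z_i)$ in the ring $\BF_k[z_1^{\pm 1},\dots,z_k^{\pm 1}]$. By the $S(k)$-symmetry it is enough to show $z_2 - z_1 \mid \rho$, i.e.\ that $\rho$ vanishes upon setting $z_1 = z_2 = s$. Specializing, the factors indexed by the pair $(1,2)$ become $\Delta_{12}\, s$ (the term $(z_2 - z_1)$ drops), and for each $i \ge 3$ one is left with a product of two bracketed factors, one coming from the pair $(1,i)$ and one from the pair $(2,i)$, each of the shape $(z_i - s) + \Delta_{1i} s$ resp.\ $(z_i - s) + \Delta_{2i} s$ (up to the sign conventions tracked in $\sigma$). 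Pairing each permutation $\sigma$ with $\sigma \circ (12)$ and using relation \eqref{eqn:rel} — precisely, $\Delta_{12}$ times any expression antisymmetric under swapping the roles of indices $1$ and $2$ is zero, which forces $\Delta_{12}[\,\cdots_{1i}\,][\,\cdots_{2i}\,] = \Delta_{12}[\,\cdots_{2i}\,][\,\cdots_{1i}\,]$ after the interchange — the two summands cancel, exactly as in the last display of the proof of Proposition \ref{prop:polynomials surface}. Hence $\rho(s,s,z_3,\dots,z_k) = 0$, so $r(z_1,\dots,z_k) := \rho / \prod_{i<j}(z_j - z_i)$ is a Laurent polynomial, symmetric under the simultaneous $S(k)$-actions on the variables and on $\BF_k$.

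Finally I must argue there are \emph{no remaining denominators at all}, i.e.\ that $R$ is genuinely a Laurent polynomial rather than merely of the form \eqref{eqn:wheel poly surface}. This is where the curve case is strictly easier than the surface case: each $\zeta_{ij}(z_i/z_j)$ has its only finite pole at $z_i = z_j$, with no pole at $z_i = z_j q_j$, so after the symmetrization the only possible poles of $R$ are along the diagonals $z_i = z_j$, and these are at worst simple. The computation just performed shows those poles are in fact absent. Concretely: $R = \rho / \prod_{i<j}(z_j - z_i)$ and we have shown $\prod_{i<j}(z_j - z_i) \mid \rho$, so $R = r \in \BF_k[z_1^{\pm 1},\dots,z_k^{\pm 1}]$, and the generators $z_1^{n_1} * \cdots * z_1^{n_k}$ span $\CS_\esm$ over $\BF_k$, whence every element of $\CS_\esm$ is a symmetric Laurent polynomial in $z_1,\dots,z_k$ with coefficients that are polynomials in the $\Delta_{ij}$ (note $\BF_k$ of \eqref{eqn:fk curve} has no $q_i$ among its generators, so ``coefficients in $\BF_k$'' is precisely ``polynomials in the $\Delta_{ij}$''). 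I expect the only mildly delicate point — and the step most worth writing carefully — is the sign-and-reindexing bookkeeping in passing from the product over $\{\sigma(i) < \sigma(j)\}$ to the product over $\{i < j\}$, together with the precise invocation of \eqref{eqn:rel} when pairing $\sigma$ with $\sigma\circ(12)$; everything else is a direct transcription of Proposition \ref{prop:polynomials surface}.
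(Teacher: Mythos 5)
Your proposal is correct and follows exactly the route the paper intends: it explicitly states that the proof of Proposition \ref{prop:polynomials} ``follows that of Proposition \ref{prop:polynomials surface} very closely'' and leaves the details to the reader, and you have supplied precisely those details — clearing the Vandermonde denominator, pairing $\sigma$ with $\sigma \circ (12)$, invoking relation \eqref{eqn:rel}, and observing that the absent $(1-xq_i)$ factor in the curve $\zeta$-function \eqref{eqn:zeta curve} means no residual denominator survives. Your identification of the bookkeeping step (passing from the product over $\sigma(i)<\sigma(j)$ to the product over $i<j$) as the only delicate point is also accurate.
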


\noindent In Proposition \ref{prop:polynomials}, the word ``symmetric" means invariant under the action of $S(k)$ that simultaneously permutes indices in both the variables $z_i$ and the parameters $\Delta_{ij}$. That is precisely why Proposition \ref{prop:polynomials} is non-trivial. The proof follows that of Proposition \ref{prop:polynomials surface} very closely (we leave the details to the interested reader). \\

\subsection{}

Although Proposition \ref{prop:polynomials} shows that the small shuffle algebra is a subset of the abelian group of Laurent polynomials, describing this subset explicitly seems quite difficult. It is non-trivial even when we specialize to $C = \BP^1$:
\begin{equation}
\label{eqn:spec p1}
\BF_k \quad \mapsto \quad K_{\BP^1 \times ... \times \BP^1} = \BZ[t_1,...,t_k] \Big /_{(1-t_1)^2 = ... = (1-t_k)^2 = 0}
\end{equation}
where $t_i$ denotes $\CO(-1)$ on the $i$--th factor. The assignment \eqref{eqn:spec p1} is explicitly given by $\Delta_{ij} = 1 - t_i t_j$. In this specialization, elements of the shuffle algebra are Laurent polynomials in $z_1,...,z_k$ with coefficients in $t_1,...,t_k$ raised to the power 0 or 1, that are symmetric with respect to simultaneous permutation of the indices. To give a flavor of how these Laurent polynomials look like, let us work out the leading order term of the shuffle element \eqref{eqn:spanning set} in the generality of Definition \ref{def:full curve}: \\

\begin{proposition}
\label{prop:leading order}

In the algebra $\CS_\esm$ of Definition \ref{def:full curve}, we have for $n_1 \geq ... \geq n_k$:
\begin{equation}
\label{eqn:leading order}
z_1^{n_1} * ... * z_1^{n_k} = z_1^{n_1}...z_k^{n_k} \sum_{\sigma \text{ admissible}} \prod_{\sigma(i)<\sigma(j)} (1-\Delta_{ij}) + ...
\end{equation}
where ... stands for monomials in $z_1,...,z_k$ of lower lexicographic order, and a permutation $\sigma \in S(k)$ is called admissible when $i>j \text{ and }\sigma(i)<\sigma(j) \Rightarrow n_i = n_j$. \\

\end{proposition}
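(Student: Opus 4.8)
The plan is to compute the shuffle product $z_1^{n_1} * \cdots * z_1^{n_k}$ directly from the defining formula \eqref{eqn:shuffle product} and track the contribution of the top monomial $z_1^{n_1} \cdots z_k^{n_k}$ in the lexicographic order. Writing out the iterated product as in the proof of Proposition \ref{prop:polynomials surface}, we get
$$
z_1^{n_1} * \cdots * z_1^{n_k} = \sum_{\sigma \in S(k)} z_1^{n_{\sigma(1)}} \cdots z_k^{n_{\sigma(k)}} \prod_{\sigma(i) < \sigma(j)} \zeta_{ij}\left( \frac{z_i}{z_j} \right)
$$
with $\zeta_{ij}(x) = 1 + \Delta_{ij} \frac{x}{1-x}$. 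First I would expand each $\zeta_{ij}(z_i/z_j)$ as a power series, say around $z_i/z_j = 0$ (i.e. treating $z_j$ as the ``large'' variable): $\zeta_{ij}(z_i/z_j) = 1 + \Delta_{ij}(z_i/z_j + z_i^2/z_j^2 + \cdots)$. Since $n_1 \geq \cdots \geq n_k$, the monomial $z_1^{n_1} \cdots z_k^{n_k}$ is the lexicographically largest one that can possibly appear, and I want to isolate exactly which terms in the sum produce it.

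The key combinatorial step is to identify, for each permutation $\sigma$, which choice of terms from the product $\prod_{\sigma(i)<\sigma(j)}\zeta_{ij}(z_i/z_j)$ contributes to $z_1^{n_1}\cdots z_k^{n_k}$ when multiplied by $z_1^{n_{\sigma(1)}}\cdots z_k^{n_{\sigma(k)}}$. Because each nontrivial term of $\zeta_{ij}(z_i/z_j)$ has the form $\Delta_{ij} (z_i/z_j)^m$ with $m \geq 1$, which strictly lowers the exponent of $z_j$ and raises that of $z_i$, taking any such term pushes the monomial strictly down in lex order unless it is exactly compensated. A careful bookkeeping argument — comparing $z_1^{n_{\sigma(1)}}\cdots z_k^{n_{\sigma(k)}}$ with the target and using $n_1 \geq \cdots \geq n_k$ — should show that the top monomial survives precisely when: either $\sigma$ is such that $n_{\sigma(i)} = n_i$ for all $i$ and we take the constant term $1$ from every factor, or more generally when $\sigma$ is \textbf{admissible} in the sense defined (meaning $\sigma$ only permutes indices within blocks where the $n_i$ agree, up to the precise inversion condition stated) and we again take the constant $1$ from each $\zeta_{ij}$ with $\sigma(i)<\sigma(j)$. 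For an admissible $\sigma$, the coefficient is $\prod_{\sigma(i)<\sigma(j)} 1 = 1$ from the power-series point of view, but after applying the symmetrization $\sym$ — equivalently, collecting the $\sigma$ and $\sigma^{-1}$-type contributions and rewriting $\zeta_{ij}$ evaluated at $z_i/z_j$ versus $z_j/z_i$ — the pairs $(i,j)$ with $\sigma(i) > \sigma(j)$ contribute a factor involving $\zeta_{ij}$ expanded the other way, and the net effect on the leading coefficient is the product $\prod_{\sigma(i)<\sigma(j)}(1 - \Delta_{ij})$, matching \eqref{eqn:leading order}.

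The main obstacle, and the step requiring the most care, is the last one: reconciling the naive power-series expansion (which gives coefficient $1$ per admissible $\sigma$) with the symmetric Laurent polynomial form asserted in the statement, whose leading coefficient is $\prod_{\sigma(i)<\sigma(j)}(1-\Delta_{ij})$. This requires understanding how the rational function $\zeta_{ij}(z_i/z_j) = \frac{(1-z_i/z_j) + \Delta_{ij} z_i/z_j}{1-z_i/z_j}$ contributes to the \emph{polynomial} part after symmetrization and clearing the denominators $\prod(z_i - z_j)$, as in Proposition \ref{prop:polynomials}. Concretely, one should clear denominators as in the proof of Proposition \ref{prop:polynomials surface}, obtaining a genuine Laurent polynomial $\rho(z_1,\dots,z_k)/\prod_{i<j}(z_j-z_i)$, then extract the coefficient of the top monomial $z_1^{n_1}\cdots z_k^{n_k}$ in $\rho$ divided by the leading behaviour of $\prod_{i<j}(z_j - z_i)$. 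Grouping the $k!$ terms of $\rho$ by the permutation $\sigma$ and noting that only admissible ones survive the division without lowering the degree, the surviving contribution from $\sigma$ is, after using relation \eqref{eqn:rel} to simplify cross terms, exactly $z_1^{n_1}\cdots z_k^{n_k}\prod_{\sigma(i)<\sigma(j)}(1-\Delta_{ij})$; summing over admissible $\sigma$ yields \eqref{eqn:leading order}. I would expect the inversion-counting and the careful use of \eqref{eqn:rel} (which kills any $\Delta_{ij}$ times an antisymmetric expression in $i,j$) to be where all the real work lies, while the setup and the power-series expansions are routine.
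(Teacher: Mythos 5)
Your starting point, the expansion
$$
z_1^{n_1} * \cdots * z_1^{n_k} = \sum_{\sigma \in S(k)} z_1^{n_{\sigma(1)}} \cdots z_k^{n_{\sigma(k)}} \prod_{\sigma(i) < \sigma(j)} \zeta_{ij}\left( \tfrac{z_i}{z_j} \right),
$$
matches the paper's, but the step where you expand every factor $\zeta_{ij}(z_i/z_j)$ as a power series \emph{around} $z_i/z_j = 0$ is the wrong move. In the regime $|z_1| \gg \cdots \gg |z_k|$ (which is what determines the lexicographically-leading coefficient), the ratio $z_i/z_j$ is \emph{large}, not small, whenever $i < j$. For those pairs the correct limit is $\zeta_{ij}(x) \to 1 - \Delta_{ij}$ as $x \to \infty$, not the constant term $1$ of the expansion at $x = 0$. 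This is precisely the source of the discrepancy you flag between the ``power-series coefficient $1$'' and the claimed coefficient $\prod(1-\Delta_{ij})$; it is not an artifact of symmetrization. Your auxiliary claim that ``taking any such term pushes the monomial strictly down in lex order'' is also backwards for pairs with $i < j$: a factor $\Delta_{ij}(z_i/z_j)^m$ raises the exponent of $z_i$ and hence raises the lexicographic order. Your hand-waved fix via ``collecting $\sigma$ and $\sigma^{-1}$-type contributions'' does not repair this, and the denominator-clearing route you then sketch, while viable in principle, is substantially more laborious than necessary and is left uncarried-out precisely at the step where, as you say, ``all the real work lies.''

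The paper takes a cleaner route that avoids all of this. Having established in Proposition \ref{prop:polynomials} that the full symmetrized sum is a Laurent polynomial, it reads off the coefficient of the top monomial by taking the genuine rational-function limit $|z_1| \gg \cdots \gg |z_k|$ on each summand (rather than a formal power series expansion around $0$). In that limit, for each unordered pair $\{i,j\}$ with $i<j$, the factor $\zeta_{lm}(z_l/z_m)$ (with $(l,m)$ being the $\sigma$-ordering of $\{i,j\}$) tends to $1 - \Delta_{ij}$ when $\sigma(i)<\sigma(j)$ and to $1$ otherwise, while the monomial prefactor $z_1^{n_{\sigma(1)}}\cdots z_k^{n_{\sigma(k)}}$ reaches the top degree only when $\sigma$ is admissible. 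This gives the stated coefficient $\prod_{\sigma(i)<\sigma(j)}(1-\Delta_{ij})$ directly, with no bookkeeping of canceling higher-order terms. If you insist on a power-series viewpoint, the fix is to expand each $\zeta_{ij}(z_i/z_j)$ around the point dictated by the regime, namely $z_i/z_j = \infty$ when $i<j$ and $z_i/z_j = 0$ when $i>j$; then the constant terms give the right coefficient immediately.
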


\begin{proof} Formula \eqref{eqn:leading order} and the proof below can be easily adapted outside the case when $n_1 \geq ... \geq n_k$, but we leave it out to avoid unnecessarily cumbersome notation. By definition, we have:
$$
z_1^{n_1} * ... * z_1^{n_k} = \sym \left[z_1^{n_1}...z_k^{n_k} \prod_{1\leq i < j \leq k } \left(1 + \frac {\Delta_{ij} z_i}{z_j - z_i} \right) \right] = 
$$
\begin{equation}
\label{eqn:jyn}
= \sum_{\sigma \in S(k)} z_1^{n_{\sigma(1)}}...z_k^{n_{\sigma}(k)} \prod_{\sigma(i) < \sigma(j)} \left(1 + \frac {\Delta_{ij} z_i}{z_j - z_i} \right)
\end{equation}
By Proposition \ref{prop:polynomials}, the right-hand side is a Laurent polynomial in $z_1,...,z_k$, and clearly, its biggest monomial in lexicographic order is precisely $z_1^{n_1}...z_k^{n_k}$. To work out the coefficient of this monomial, we must take the leading order term in the limit $|z_1| \gg ... \gg |z_k|$. Let us focus on the summand corresponding to a given permutation $\sigma$ in the right-hand side of \eqref{eqn:jyn}. The leading order monomial $z_1^{n_1}...z_k^{n_k}$ only appears when the permutation $\sigma$ is admissible, and the coefficient of this monomial is $1 - \Delta_{ij}$ if $\sigma(i) < \sigma(j)$ and $1$ otherwise.

\end{proof}

\section{Appendix}
\label{sec:appendix}

\medskip

\subsection{}
\label{sub:recall}

Let us present the definition of the moduli space $\CM$ of semistable sheaves on $S$, following Chapter 4 of \cite{HL}. Recall that we fix an ample divisor $H$ that corresponds to a line bundle henceforth denoted by $\CO_S(1)$. With respect to this line bundle, any coherent sheaf $\CF$ on $S$ has a Hilbert polynomial defined by:
\begin{equation}
\label{eqn:hilbert}
P_{\CF}(n) = \chi(S, \CF \otimes \CO_S(n)) 
\end{equation}
If $S$ is a surface and we write $r, c_1, c_2$ for the rank, first and second Chern classes of $\CF$, then the Hirzebruch-Riemann-Roch theorem gives us:
$$
P_{\CF}(n) = \frac {r H \cdot H}2 n^2 + \left(c_1 \cdot H - \frac {r H \cdot \CK_S}2 \right) n + \left(\frac {c_1\cdot c_1}2 - c_2 - \frac {c_1 \cdot \CK_S}2 + r \chi(S,\CO_S) \right)
$$
where $\CK_S$ denotes either the canonical bundle of $S$, or the corresponding divisor. One defines the reduced Hilbert polynomial of $\CF$ as:
\begin{equation}
\label{eqn:reduced}
p_{\CF}(n) = \frac {P_{\CF}(n)}r = \frac {c_1 \cdot H}r n + \frac 1r \left( \frac {c_1\cdot c_1}2 - c_2 - \frac {c_1 \cdot \CK_S}2 \right) + \text{polynomial}(n)
\end{equation}
where polynomial$(n)$ does not depend on $r, c_1, c_2$. Having defined the Hilbert polynomial, we turn to Grothendieck's $\quot$ scheme corresponding to a coherent sheaf $\CE$ on a projective scheme $S$ and a polynomial $P(n)$. Consider the functor:
$$
\sQuot(T) = \Big \{ \text{quotients } \CE_T \twoheadrightarrow \CF \text{ on }T \times S, \text{ flat over }T \text{ with } P_{\CF_t}(n) = P(n) \ \forall t \in T \Big \}
$$
where $\CE_T = \pi^*(\CE)$ under the projection map $\pi : T\times S \rightarrow S$. The property that $\CF$ is flat over $T$ implies that its fibers $\CF_t$ over all closed points $t \in T$ have the same Hilbert polynomial, which we assume is $P$. The following is due to Grothendieck: \\

\begin{theorem}
\label{thm:quot}

There exists a projective scheme $\emph{Quot}$ which \textbf{represents} the functor $T \mapsto \sQuot(T)$, which means that there exists a quotient:
$$
\CE_{\emph{Quot}} \twoheadrightarrow \tCU \qquad \text{on} \quad \emph{Quot} \times S
$$
flat over $\emph{Quot}$ with the Hilbert polynomials of its fibers equal to $P(n)$, with the following universal property. There is is a natural identification:
$$
\emph{Maps}(T, \emph{Quot}) = \sQuot(T)
$$
given by sending a map of schemes $\phi : T \rightarrow \emph{Quot}$ to the quotient $\phi^* \left( \CE_{\emph{Quot}} \twoheadrightarrow \tCU \right)$.

\end{theorem}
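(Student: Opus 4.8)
The statement is Grothendieck's existence theorem for the $\sQuot$ scheme, and the plan is to realize the functor $\sQuot$ as a closed subfunctor of a Grassmannian functor, following the classical argument (see Section 2.2 of \cite{HL} for a detailed account). \textbf{First}, I would establish the boundedness input: the family of all quotients $\CE \twoheadrightarrow \CF$ on $S$ with Hilbert polynomial $P$, together with their kernels $\CK = \Ker(\CE \to \CF)$, is bounded. Concretely, there is an integer $m$ -- depending only on $\CE$, $P$ and $\CO_S(1)$, which I fix once and for all -- such that for every such quotient the twisted sheaves $\CF(m)$ and $\CK(m)$ are globally generated, have vanishing higher cohomology, and satisfy $\dim H^0(S,\CF(m)) = P(m) =: p$; I also take $m$ large enough that $H^0(S,\CE(m)) \otimes \CO_S(-m) \twoheadrightarrow \CE$. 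This rests on the Castelnuovo--Mumford regularity estimates of Grothendieck and Mumford, and is precisely where projectivity of $S$ enters.

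\textbf{Next}, set $V := H^0(S,\CE(m))$ and let $G = \mathrm{Grass}(V,p)$ be the Grassmannian of $p$--dimensional quotients of $V$, which is a projective scheme. Given a $T$--point $\CE_T \twoheadrightarrow \CF$ of $\sQuot$, the pushforward $p_{T*}(\CF(m))$ is locally free of rank $p$ and commutes with arbitrary base change -- this follows from cohomology-and-base-change together with the fiberwise regularity arranged in the previous step -- so the induced surjection $V \otimes \CO_T \twoheadrightarrow p_{T*}(\CF(m))$ is a $T$--point of $G$. This defines a natural transformation $\sQuot \to G$.

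\textbf{The technical heart} is to show this natural transformation is a closed immersion of functors. Given any $\phi : T \to G$ with tautological subbundle $R \subset V \otimes \CO_T$, pull back to $T \times S$ and form the composite $R \otimes \CO_{T \times S}(-m) \to V \otimes \CO_{T \times S}(-m) \to \CE_T$; let $\CG \subset \CE_T$ be its image and put $\CF := \CE_T / \CG$. Applying the flattening stratification to $\CF$ produces a locally closed subscheme $Z \subset T$ such that a morphism $T' \to T$ factors through $Z$ precisely when the pullback of $\CF$ is flat over $T'$ with constant Hilbert polynomial $P$; comparing $p_{T*}(\CF(m))$ with the tautological quotient via the base-change isomorphisms identifies $Z$ with the locus where $\phi$ factors through $\sQuot$, and $\CF|_{Z \times S}$ with the corresponding quotient. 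One then checks -- once more using the regularity of the first step -- that the conditions cutting out $Z$ are in fact closed (the relevant higher pushforwards vanish, and the generic rank of $p_{T*}\CF(n)$ cannot drop below $P(n)$ while the Euler characteristic is pinned to $P(n)$), so $Z \hookrightarrow T$ is a closed immersion.

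\textbf{Finally}, applying the previous step to $T = G$ and $\phi = \mathrm{id}_G$ produces the desired projective scheme $\emph{Quot} \hookrightarrow G$ together with the universal quotient $\CE_{\emph{Quot}} \twoheadrightarrow \tCU$ (the sheaf $\CF$ of the previous step restricted to $\emph{Quot} \times S$); the identification $\emph{Maps}(T, \emph{Quot}) = \sQuot(T)$ is then immediate by combining the construction with the universal property of $G$. I expect the main obstacle to be the interplay of the first three steps: proving boundedness uniformly, and then verifying that the single chosen $m$ simultaneously makes $p_{T*}(\CF(m))$ locally free and base-change compatible, \emph{and} forces the subfunctor of $G$ to be cut out by closed rather than merely locally closed conditions.
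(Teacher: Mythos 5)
Your proposal is correct in outline and recovers Grothendieck's theorem, but it takes a genuinely different route from the paper's own sketch to the central closed-subscheme claim. The paper first reduces to $S = \BP^N$ by pushing $\CE$ forward along a projective embedding, maps to the Grassmannian of $P(n)$-dimensional quotients of $H^0(\CE(n))$ for $n$ large, and then describes the defining ideal of $\quot$ inside the Grassmannian directly in Grothendieck's original language: the closed condition is that the chosen quotient of $H^0(\CE(n))$ be preserved under multiplication by the generators of the homogeneous coordinate ring of $\BP^N$, i.e.\ that it actually assemble into a quotient sheaf. You instead build the natural transformation $\sQuot \to \mathrm{Grass}(V,p)$ functorially via the pushforward $p_{T*}(\CF(m))$ and then carve out the subfunctor using Mumford's flattening stratification; this is the route taken in Section 2.2 of Huybrechts--Lehn, the paper's own reference. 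Both approaches are standard and both deliver representability plus projectivity. The one place where your sketch is slightly imprecise is the closing step: the flattening stratification a priori produces a \emph{locally closed} stratum of the Grassmannian, and the standard way to upgrade this to a \emph{closed} (hence projective) subscheme is to verify the valuative criterion of properness for the Quot functor, rather than the semicontinuity and Euler-characteristic argument you gesture at. Your instinct that this is the delicate spot is right, but the clean fix is the valuative criterion, not a sharpening of the regularity bounds.
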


\tab 
We will not present the details of the construction of $\quot$, but the main idea is the following: since $S$ is projective, there exists an embedding $\iota : S \hookrightarrow \BP^N$ for some $N$. We may identify $\CE$ with $\iota_*\CE$, and this reduces the problem to constructing the Quot scheme for $S = \BP^N$. In this case, one shows that the assignment:
$$
\{\CE \twoheadrightarrow \CF\} \qquad \leadsto \qquad \{ H^0(\BP^N, \CE \otimes \CO_S(n)) \twoheadrightarrow H^0(\BP^N, \CF \otimes \CO_S(n)) \}
$$
is injective for large enough $n$. Moreover, this assignment realizes $\quot$ as a closed subscheme of the Grassmannian of $P_\CF(n)$--dimensional quotients of a $P_\CE(n)$ dimensional vector space. The ideal cutting out the closed subscheme is precisely the requirement that the $P_{\CF}(n)$--dimensional quotient is ``preserved" by multiplication with the generators of the coordinate ring of $\BP^N$, or in other words, gives rise to a sheaf on $\BP^N$. The universal quotient sheaf on the Grassmannian generates an $\CO_{\BP^N}$--module, which restricts to the universal sheaf on $\quot \times \BP^N$. \\

\subsection{}

Given two polynomials $p(n)$ and $q(n)$, we will write $p(n) \geq q(n)$ if this inequality holds for $n$ large enough. Note that this is equivalent to the fact that the coefficients of $p$ are greater than or equal to those of $q$ in lexicographic ordering. \\

\begin{definition}
\label{def:semistable}

A torsion-free sheaf $\CF$ on $S$ is called \textbf{semistable} if:
\begin{equation}
\label{eqn:semistable}
p_{\CF}(n) \geq p_{\CG}(n) \qquad \forall \ \CG \subset \CF
\end{equation}
If the inequality is strict for all proper $\CG$, then we call $\CF$ \textbf{stable}.

\end{definition}

\tab 
According to formula \eqref{eqn:reduced}, when $S$ is a surface the difference between the reduced Hilbert polynomials $p_{\CF}(n) - p_{\CG}(n)$ is linear in $n$, and therefore strict inequality in \eqref{eqn:semistable} boils down to:
\begin{equation}
\label{eqn:first condition}
\frac {c_1 \cdot H}r > \frac {c_1'\cdot H}{r'} \qquad \qquad \text{or}
\end{equation}
$$
\frac {c_1 \cdot H}r = \frac {c_1'\cdot H}{r'} \quad \text{and} \quad \frac 1r \left( \frac {c_1\cdot c_1}2 - c_2 - \frac {c_1 \cdot \CK_S}2 \right) > \frac 1{r'} \left( \frac {c_1'\cdot c_1'}2 - c_2' - \frac {c_1' \cdot \CK_S}2 \right)
$$
where $(r,c_1,c_2)$ denote the invariants of $\CF$ and $(r',c_1',c_2')$ denote the invariants of $\CG$. These properties explain the relevance of Assumption A of \eqref{eqn:assumption a}: if $\gcd(r, c_1 \cdot H) = 1$, then the second option above cannot happen for any proper subsheaf $\CG \subset \CF$. Therefore, a sheaf $\CF$ under Assumption A is stable if and only if it is semistable.

\tab 
Whenever $\CF' \subset \CF$ are sheaves on $S$ whose quotient is the skyscraper sheaf above some point $x \in S$, we will say that $\CF$ and $\CF'$ are ``Hecke modifications" of each other. The following observation will be very important for our purposes. \\

\begin{proposition}
\label{prop:hecke}

Under Assumption A of \eqref{eqn:assumption a}, for any Hecke modification $0 \rightarrow \CF' \rightarrow \CF \rightarrow \BC_x \rightarrow 0$, the sheaf $\CF$ is stable if and only if $\CF'$ is stable. \\

\end{proposition}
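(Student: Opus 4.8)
The plan is to prove the two implications separately, each time testing (semi)stability against an arbitrary subsheaf and comparing it with its intersection with $\CF'$ (note that $\CF'$, being a subsheaf of the torsion-free sheaf $\CF$, is itself torsion-free, so Definition~\ref{def:semistable} applies to it). Two preliminary observations carry most of the weight. First, by \eqref{eqn:reduced}, for any torsion-free sheaf $\CE$ on $S$ the reduced Hilbert polynomial has the shape $p_\CE(n)=\mu(\CE)\,n+\tfrac1{\rk\CE}\big(\tfrac{c_1(\CE)^2}2-c_2(\CE)-\tfrac{c_1(\CE)\cdot\CK_S}2\big)+(\text{a polynomial independent of }\CE)$, where $\mu(\CE)=\tfrac{c_1(\CE)\cdot H}{\rk\CE}$; hence for any two such sheaves the difference $p_{\CE_1}-p_{\CE_2}$ is a polynomial of degree $\le1$, whose sign for $n\gg0$ is decided first by $\mu(\CE_1)-\mu(\CE_2)$ and, if that vanishes, by its constant term. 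Second, the defining sequence gives $\rk\CF=\rk\CF'=r$ and $P_\CF=P_{\CF'}+1$, hence $p_\CF=p_{\CF'}+\tfrac1r$. I will also use the arithmetic remark made after \eqref{eqn:first condition}: since $\gcd(r,c_1\cdot H)=1$, a nonzero subsheaf of a torsion-free sheaf of rank $r$ and first Chern class $c_1$ that has the same slope as it must itself have rank $r$ (equality of slopes forces $r\mid\rk$).

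To prove ``$\CF$ semistable $\Rightarrow\CF'$ stable'', I would take a proper subsheaf $\CG\subsetneq\CF'$. Since $\CG\subsetneq\CF$ as well, semistability of $\CF$ gives $p_\CG\le p_\CF=p_{\CF'}+\tfrac1r$, so the degree-$\le1$ polynomial $q:=p_\CG-p_{\CF'}$ satisfies $q(n)\le\tfrac1r$ for $n\gg0$, forcing its leading coefficient $\mu(\CG)-\mu(\CF')$ to be $\le0$. If it is negative then $q(n)\to-\infty$ and $p_\CG<p_{\CF'}$ eventually. If it is zero, the arithmetic remark gives $\rk\CG=r$, hence $c_1(\CG)\cdot H=c_1\cdot H$; then $\CF'/\CG$ is torsion with $c_1(\CF'/\CG)\cdot H=0$, so by ampleness of $H$ it has zero-dimensional support, i.e.\ finite length $\ell\ge1$ (length $0$ being excluded as $\CG\ne\CF'$), and $p_\CG=p_{\CF'}-\tfrac\ell r<p_{\CF'}$. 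In all cases $p_\CG<p_{\CF'}$, so $\CF'$ is stable.

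To prove ``$\CF'$ stable $\Rightarrow\CF$ stable'', I would take a proper subsheaf $\CG\subsetneq\CF$. If $\CG\subseteq\CF'$, then either $\CG=\CF'$, giving $p_\CG=p_{\CF'}=p_\CF-\tfrac1r<p_\CF$, or $\CG\subsetneq\CF'$, giving $p_\CG<p_{\CF'}<p_\CF$ by stability of $\CF'$. If $\CG\not\subseteq\CF'$, the image of $\CG$ in the simple sheaf $\CF/\CF'\cong\CO_x$ is nonzero, hence all of it, so $\CG/(\CG\cap\CF')\cong\CO_x$; writing $r_\CG=\rk\CG=\rk(\CG\cap\CF')$ this gives $p_\CG=p_{\CG\cap\CF'}+\tfrac1{r_\CG}$. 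The intersection $\CG\cap\CF'$ is a proper subsheaf of $\CF'$ --- equality would mean $\CF'\subseteq\CG$, and then $\CG/\CF'\subseteq\CF/\CF'$ being $0$ or everything would force $\CG=\CF'$ or $\CG=\CF$, both excluded --- so $p_{\CG\cap\CF'}<p_{\CF'}$ by stability, in particular $\mu(\CG\cap\CF')-\mu(\CF')\le0$. If this slope difference is negative then $p_{\CG\cap\CF'}-p_{\CF'}\to-\infty$, whence $p_\CG-p_\CF=(p_{\CG\cap\CF'}-p_{\CF'})+(\tfrac1{r_\CG}-\tfrac1r)\to-\infty$ and $p_\CG<p_\CF$ eventually. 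If it is zero, the arithmetic remark gives $r_\CG=r$, so $p_\CG=p_{\CG\cap\CF'}+\tfrac1r<p_{\CF'}+\tfrac1r=p_\CF$. Hence $p_\CG<p_\CF$ in every case, so $\CF$ is stable.

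Since a stable sheaf is semistable, the first implication yields ``$\CF$ stable $\Rightarrow\CF'$ stable'' and the second yields the converse, which proves the proposition. I expect the one delicate point to be the borderline case in which the slope difference vanishes: this is exactly where Assumption A enters, first to pin the test subsheaf to full rank $r$, and then (in the first implication) to exclude a one-dimensional quotient via ampleness of $H$. Everything else is routine bookkeeping with the degree-$\le1$ polynomials $p_{\CE_1}-p_{\CE_2}$ and the fact that intersecting with $\CF'$ changes the Hilbert polynomial by at most the constant $1$.
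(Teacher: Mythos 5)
Your proof is correct and follows essentially the same approach as the paper: both arguments reduce stability to a slope (leading-order) comparison, using Assumption A to rule out any destabilization that does not already appear at the level of slopes, and both pass between $\CG$ and $\CG\cap\CF'$ using the fact that the quotient $\CO_x$ is simple and has trivial rank and first Chern class. You handle the borderline equal-slope case more explicitly than the paper (via the coprimality forcing full rank $r$, then ampleness of $H$ forcing a finite-length quotient), but the underlying mechanism is the same.
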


\begin{proof} The important observation is that $\CF$ and $\CF'$ have the same rank $r$ and first Chern class $c_1$. Suppose that $\CF'$ is not stable. Then there exists a sheaf $\CG \subset \CF'$ with invariants $r'$ and $c_1'$ such that the opposite inequality to \eqref{eqn:first condition} holds:
\begin{equation}
\label{eqn:second condition}
\frac {c_1 \cdot H}r < \frac {c_1'\cdot H}{r'}
\end{equation}
Note that equality cannot happen due to Assumption A. Since $\CG$ is also a subsheaf of $\CF$, this  implies that $\CF$ is not stable. Conversely, suppose that $\CF$ is not stable. Then there exists a sheaf $\CG \subset \CF$ with invariants $r'$ and $c_1'$ such that \eqref{eqn:second condition} holds. Since the sheaf $\CG \cap \CF' \subset \CF'$ has the same invariants $r'$ and $c_1'$, then $\CF'$ is not stable. 

\end{proof}

\subsection{}
\label{sub:git}

One cannot make an algebraic variety out of all coherent sheaves on $S$, even if one fixes the Hilbert polynomial $P$. But one can construct such a variety out of the semistable sheaves (the stable sheaves will form an open subvariety), and this will be our moduli space $\CM$. The main observation (see Theorem 3.3.7 of \cite{HL}) is that there exists a large enough $n$ such that for all semistable sheaves with Hilbert polynomial $P$, the sheaf $\CF \otimes \CO_S(n)$ has no higher cohomology, and moreover the natural evaluation map:
$$
H^0(S,\CF \otimes \CO_S(n)) \otimes \CO_S(-n) \twoheadrightarrow \CF
$$
is surjective. Letting $V$ be a vector space of dimension $P(n) = \dim H^0(S,\CF \otimes \CO_S(n))$, we consider the following special case of the Quot scheme of Theorem \ref{thm:quot}:
\begin{equation}
\label{eqn:quot}
\quot = \Big\{ V \otimes \CO_S(-n) \stackrel{\phi}\twoheadrightarrow \CF \Big\}
\end{equation}
Moreover, there exists an action $GL(V) \curvearrowright \quot$ given by the tautological action on the vector space $V$, and it is easy to see that the universal family is naturally linearized in such a way that the center $\BC^* = Z(GL(V))$ acts with weight 1. Moreover, the $GL(V)$ action clearly preserves the open subsets:
\begin{align*}
&R^{ss} = \Big \{ \phi \text{ as in }\eqref{eqn:quot} \text{ s.t. } \CF \text{ is semistable and } \phi \text{ induces } V \cong H^0(\CF \otimes \CO_S(n)) \Big\} \\
&R^{s} \ = \Big \{ \phi \text{ as in }\eqref{eqn:quot} \text{ s.t. } \CF \text{ is stable and } \phi \text{ induces } V \cong H^0(\CF \otimes \CO_S(n)) \Big\}
\end{align*}
of $\quot$. Note that Assumption A implies that $R^s = R^{ss}$, but this is certainly not necessary for the construction of these moduli spaces. For large enough $m \in \BN$ consider the $GL(V)$--linearized line bundle: 
\begin{equation}
\label{eqn:line bundle}
L_m =\det  p_{1*}(\tCU \otimes p_2^*\CO_S(m))
\end{equation} 
where the projections $p_1$ and $p_2$ are as in the following diagram:
\begin{equation}
\label{eqn:diagram quot}
\xymatrix{
& \tCU \ar@{.>}[d] & \\
& \quot \times S \ar[ld]_{p_1} \ar[rd]^{p_2} & \\
\quot & & S}
\end{equation}
Let us write $R = \overline{R^{ss}} \subset \quot$ and observe that it is preserved by the $GL(V)$ action. Therefore, the setup above is that of a reductive group $G$ on a projective scheme $X$, which is endowed with a $G$--linearized ample line bundle $L$. \\\ 

\begin{definition}
\label{def:git}

A point $x \in X$ is called \textbf{semistable} if:
\begin{equation}
\label{eqn:hm}
\forall \ \lambda : \BC^* \rightarrow G, \qquad \emph{weight}_{\BC^*} \left( L|_{\lim_{t\rightarrow 0} \lambda(t)\cdot x} \right) \leq 0
\end{equation}
The point $x$ is called \textbf{stable} if the inequality is strict for all non-trivial $\lambda$. \\

\end{definition}

\noindent Condition \eqref{eqn:hm} is called the Hilbert-Mumford criterion, and is equivalent to other definitions of semistable/stable points. One way of restating the condition is that $x$ is semistable if its $G$--orbit does not have one-parameter subgroups which converge to the zero section of the line bundle $L$. The following result is key to the construction of the moduli space of semistable sheaves (see Theorem 4.3.3 of \cite{HL}): \\

\begin{lemma}
\label{lem:fundamental}

The open subsets $R^{ss}$ and $R^s$ are the loci of semistable and stable points (respectively) of the action $GL(V) \curvearrowright R$, with respect to the line bundle $L_m$.

\end{lemma}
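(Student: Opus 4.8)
The plan is to apply the Hilbert--Mumford criterion directly and translate the GIT-stability condition on a point $\phi \in R$ into the sheaf-theoretic (semi)stability condition from Definition \ref{def:semistable}. The key input is the explicit description of the linearization coming from the determinant line bundle $L_m = \det p_{1*}(\tCU \otimes p_2^*\CO_S(m))$, together with the standard fact (going back to the original arguments of Simpson and Le Potier, reproduced as Theorem 4.3.3 of \cite{HL}) that the weight of $L_m$ at the limit $\lim_{t\to 0}\lambda(t)\cdot\phi$ of a one-parameter subgroup $\lambda : \BC^* \to GL(V)$ can be computed in terms of the weight-space decomposition $V = \bigoplus_{n} V_n$ that $\lambda$ induces on $V$. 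First I would recall this computation: a one-parameter subgroup $\lambda$ of $GL(V)$ is the same as a $\BZ$-filtration (or grading) $V = \bigoplus V_n$, and for $m \gg 0$ the weight of $L_m$ at the flat limit is, up to a positive constant, a linear combination $\sum_n n \cdot \big( \dim V_n \cdot P(m) - \mathrm{rk}(V) \cdot \chi(\CF_{\leq n}(m)) \big)$, where $\CF_{\leq n} \subset \CF$ is the subsheaf generated by $\phi(V_{\leq n}\otimes\CO_S(-n'))$ under the quotient map. This identity is the technical heart of the matter and I would quote it from \cite{HL} rather than reprove it.

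Given this, the argument splits into the two inclusions. For ``GIT-semistable $\Rightarrow$ sheaf-semistable'': suppose $\phi \in R^{ss}$ (in the sense that $V\cong H^0(\CF(n))$ and we want to show $\CF$ is semistable as a sheaf). If $\CF$ had a destabilizing subsheaf $\CG \subset \CF$ with $p_\CG(n) > p_\CF(n)$, one builds a one-parameter subgroup by taking the two-step filtration $V' = H^0(\CG(n)) \subset V$; the weight computation above shows the weight of $L_m$ at the corresponding limit is positive, contradicting \eqref{eqn:hm}. Conversely, for ``sheaf-semistable $\Rightarrow$ GIT-semistable'': given an arbitrary $\lambda$, i.e. an arbitrary filtration of $V$, one uses the semistability of $\CF$ applied to each subsheaf $\CF_{\leq n}$ in the induced filtration of $\CF$, together with the global-generation and cohomology-vanishing hypotheses that were built into the definition of $R^{ss}$ (namely $V \cong H^0(\CF(n))$ and $\dim V_{\leq n} \leq \dim H^0(\CF_{\leq n}(n)) = P_{\CF_{\leq n}}(n)$ for $m \gg 0$), to conclude that every term in the weight sum is $\leq 0$; hence $\phi$ is GIT-semistable. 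The stable case is obtained by replacing all non-strict inequalities by strict ones and checking that equality in the weight sum forces the filtration to be trivial, exactly as in \loccit

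The main obstacle I anticipate is not conceptual but bookkeeping: one must choose $m$ (and the earlier $n$) uniformly large enough that (i) $\CF(n)$ is globally generated with vanishing higher cohomology for all semistable $\CF$ with the fixed Hilbert polynomial, (ii) the same holds for all the saturated subsheaves $\CF_{\leq n}$ that can arise (this uses a boundedness statement for the family of such subsheaves, Theorem 3.3.7 of \cite{HL}), and (iii) the asymptotic expansion in $m$ of the weight of $L_m$ is dominated by its leading term, so that the sign of the weight is governed precisely by the comparison of reduced Hilbert polynomials. All three are standard and available in \cite{HL}, so I would state them as cited facts and concentrate the write-up on the translation between the filtration of $V$ and the filtration of $\CF$, and on the sign comparison. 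Since Assumption A forces $R^s = R^{ss}$, no separate treatment of strictly semistable points is needed in our setting, though I would phrase the proof so that it does not rely on that coincidence.
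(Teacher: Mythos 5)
The paper itself gives no proof of Lemma \ref{lem:fundamental}; it simply cites Theorem 4.3.3 of \cite{HL} and moves on. Your sketch correctly reconstructs the Le Potier--Simpson argument that underlies that theorem --- translating one-parameter subgroups of $GL(V)$ into filtrations, computing the Hilbert--Mumford weight of $L_m$ via the determinant-line linearization, and comparing with the reduced Hilbert polynomials using boundedness to pick $n$ and $m$ uniformly --- so it is faithful to what the paper invokes, just with the cited details spelled out.
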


\tab
As a consequence of Lemma \ref{lem:fundamental}, the fundamental results of geometric invariant theory (see Section 4.2 of \cite{HL} for a review) imply that there exist quotients:
\begin{equation}
\label{eqn:construction}
\CM^{ss} = R^{ss} / GL(V) \qquad \text{and} \qquad \CM^s = R^s / GL(V)
\end{equation}
which are good and geometric, respectively. According to Lemma 4.3.1 of \cite{HL}, these quotients corepresent the functors of semistable and stable sheaves on $S$, respectively. \\

\subsection{}
\label{sub:universal}

To construct the universal family $\CU$ on $\CM^s \times S$, there is only one reasonable thing one can do: descend the universal family $\tCU$ on $R^s \times S$ to the $GL(V)$--quotient. According to Theorem 4.2.15 of \cite{HL}, this is possible if and only if the stabilizers of all points under the action $GL(V) \curvearrowright R^s$ act trivially on the fibers of $\tCU$. Note that a point $\{V \otimes \CO_S(-n) \twoheadrightarrow \CF\} \in R^s \times S$ is stabilized by $g \in GL(V)$ if and only if there exists an endomorphism $\phi \in \text{End}(\CF)$ such that the following diagram commutes:
$$
\xymatrix{
V \otimes \CO_S(-n) \ar@{->>}[r] \ar[d]^{g\otimes \text{Id}} & \CF \ar[d]^\phi \\
V \otimes \CO_S(-n) \ar@{->>}[r] & \CF}
$$
Since stable sheaves are simple, the endomorphism $\phi$ can only be a constant, and this forces $g \in \BC^* = Z(GL(V))$. We conclude that the stabilizer of any point in $R^s \times S$ is the center $\BC^*$, so descent is possible if and only if the universal family is invariant under the action of the center. However, this is not true since the center acts on the universal family with weight 1. 

\tab 
Fortunately, not all is lost. As shown in Proposition 4.6.2 of \cite{HL}, one could also get a universal sheaf on $\CM^s \times S$ by descending instead the sheaf:
\begin{equation}
\label{eqn:real universal}
\tCU \otimes p_1^* (A^{-1}) \in \text{Coh}(R^s \times S)
\end{equation}
for some line bundle $A$ on $R^s$. We abuse notation and write $p_1$ and $p_2$ for the maps \eqref{eqn:diagram quot} restricted from $\quot$ to its subscheme $R^s$. If the line bundle $A$ is $GL(V)$ linearized such that the center acts with weight 1, then the center will act on the sheaf \eqref{eqn:real universal} with weight 0, and therefore descends to a universal sheaf $\CU$ on $\CM^s \times S$. 

\tab 
To construct the line bundle $A$, Chapter 4.6 of \cite{HL} assumes the existence of a $K$--theory class $[B] \in \ks$ such that:
\begin{equation}
\label{eqn:chi is 1}
\chi(S,\CF \otimes [B]) = 1
\end{equation}
for all sheaves $\CF$ with Hilbert polynomial $P$. Then we may set:
\begin{equation}
\label{eqn:def a}
A = \det p_{1*}(\tCU \otimes p_2^*B)
\end{equation}
which will have weight 1 for the $\BC^* = Z(GL(V))$ action, as required. In our case, $S$ is a surface for which Assumption A guarantees that $\gcd(r, c_1 \cdot H) = 1$, so we have the following close variant of Corollary 4.6.7 of \cite{HL}: \\

\begin{proposition}
\label{prop:choose}

Let $a,b \in \BZ$ such that $a r + b (c_1 \cdot H) = 1$. If we choose: 
$$
[B] = \left( a + b \frac {H\cdot (H+\CK_S)}2 \right) [\CO_{\emph{pt}}] + b [\CO_H]
$$
formula \eqref{eqn:chi is 1} holds for all sheaves $\CF$ with Hilbert polynomial $P$.

\end{proposition}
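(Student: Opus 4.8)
The plan is to reduce the computation of $\chi(S,\CF\otimes[B])$ to the evaluation of the Hilbert polynomial $P_\CF$ at two integers, exploiting the additivity of the Euler characteristic in $K$--theory. Since $[B]$ is an integer combination of $[\CO_{\pt}]$ and $[\CO_H]$, it suffices to compute $\chi(S,\CF\otimes[\CO_{\pt}])$ and $\chi(S,\CF\otimes[\CO_H])$ separately for any sheaf $\CF$ with Hilbert polynomial $P$ (equivalently, with fixed invariants $r,c_1,c_2$). First I would observe that $\chi(S,\CF\otimes[\CO_{\pt}])=r$: by Hirzebruch--Riemann--Roch this Euler characteristic equals $\int_S\mathrm{ch}(\CF)\,\mathrm{ch}(\CO_{\pt})\,\mathrm{td}(S)$, and since $\mathrm{ch}(\CO_{\pt})$ is the class of a point, only the degree-zero part of $\mathrm{ch}(\CF)$ — namely $r$ — contributes in top degree. (One should phrase this in $K$--theory rather than as a naive tensor computation, because $\CF$ is only torsion-free, not locally free.)

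Next I would use the short exact sequence $0\to\CO_S(-H)\to\CO_S\to\CO_H\to 0$ to write $[\CO_H]=[\CO_S]-[\CO_S(-H)]$ in $\ks$, whence $\chi(S,\CF\otimes[\CO_H])=\chi(S,\CF)-\chi(S,\CF(-H))=P_\CF(0)-P_\CF(-1)$ with $P_\CF$ as in \eqref{eqn:hilbert}. Plugging in the explicit HRR expression for $P_\CF(n)$ displayed just before \eqref{eqn:reduced}, the constant term cancels and one is left with
$$
\chi(S,\CF\otimes[\CO_H]) = -\frac{r\,H\cdot H}{2} + c_1\cdot H - \frac{r\,H\cdot\CK_S}{2} = c_1\cdot H - \frac r2\,H\cdot(H+\CK_S).
$$

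Finally I would assemble the pieces: with the proposed $[B] = \bigl(a + b\,\tfrac{H\cdot(H+\CK_S)}{2}\bigr)[\CO_{\pt}] + b\,[\CO_H]$,
$$
\chi(S,\CF\otimes[B]) = \Bigl(a + b\,\tfrac{H\cdot(H+\CK_S)}{2}\Bigr) r + b\Bigl(c_1\cdot H - \tfrac r2\,H\cdot(H+\CK_S)\Bigr) = ar + b\,(c_1\cdot H) = 1,
$$
the last equality being the defining property of $a$ and $b$. The terms proportional to $\tfrac12 H\cdot(H+\CK_S)$ are designed precisely to cancel. There is no substantive obstacle here; the computation is routine, and the only point requiring a little care is the $K$--theoretic justification of $\chi(S,\CF\otimes[\CO_{\pt}])=r$ for a torsion-free (non-locally-free) sheaf.
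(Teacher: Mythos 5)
Your proof is correct and follows essentially the same route as the paper, which simply cites the two identities $\chi(S,\CF\otimes[\CO_{\pt}])=r$ and $\chi(S,\CF\otimes[\CO_H])=c_1\cdot H-\tfrac r2\,H\cdot(H+\CK_S)$ as easy consequences of Hirzebruch--Riemann--Roch and then assembles them exactly as you do. Your derivation of the second identity via $[\CO_H]=[\CO_S]-[\CO_S(-H)]$ and the difference $P_\CF(0)-P_\CF(-1)$ of the Hilbert polynomial is a slightly more explicit packaging of the same HRR computation, and the caveat you raise about justifying the point computation $K$--theoretically for a torsion-free sheaf is handled automatically since $\chi(S,\,\cdot\,)$ is already defined on $\ks$.
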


\tab 
Indeed, the Proposition is a consequence of the fact that $\chi(S,\CF \otimes [\CO_{\pt}]) = r$ and:
$$
\chi(S,\CF \otimes [\CO_H]) = c_1 \cdot H - r \cdot \frac {H\cdot (H+\CK_S)}2
$$
both easy applications of the Hirzebruch-Riemann-Roch theorem. It will be very important for us that the class $B$ (and therefore also the line bundle $A$, and ultimately the universal sheaf $\CU$) only depends on $r$ and $c_1 \cdot H$, and \textbf{NOT} on $c_2$. \\

\subsection{}
\label{sub:flag}

For the remainder of this Section, we impose Assumption A and will define the moduli space $\bfZ$ of Subsection \ref{sub:locus}. Explicitly, let us fix a quadratic polynomial $P$ (which we will not explicitly mention from now on, but it will be implied that all sheaves denoted by $\CF$ have $P$ as Hilbert polynomial) and consider the functor $\sZ$ which associates to a scheme $T$ the set of quadruples of: \\

\begin{itemize}

\item a map $\phi: T \rightarrow S$ \\

\item an invertible sheaf $\CL$ on $T$ \\

\item a $T$--flat family of stable sheaves $\CF$ on $T \times S$ \\

\item a surjective homomorphism $\CF \stackrel{\psi}\twoheadrightarrow \Gamma_*(\CL)$, where $\Gamma = \text{graph}(\phi): T \hookrightarrow T \times S$ 

\end{itemize}

\tab 
The purpose of the current and next Subsections is to show that the functor $\sZ$ is representable by a scheme that will be denoted by $\bfZ$. Our starting point is the well-known fact (see Section 2.A of \cite{HL}) that there exists a scheme $\flag$ that represents the functor that associates to a scheme $T$ the set of quadruples consisting of: \\

\begin{itemize}

\item a map $\phi: T \rightarrow S$ \\

\item an invertible sheaf $\CL$ on $T$ \\

\item a $T$--flat family of quotients $V_T \otimes \CO_S(-n) \stackrel{\phi}\twoheadrightarrow \CF$ on $T \times S$\\

\item a surjective homomorphism $\CF \stackrel{\psi}\twoheadrightarrow \Gamma_*(\CL)$, where $\Gamma = \text{graph}(\phi): T \hookrightarrow T \times S$ 

\end{itemize}

\tab Explicitly, the functor above is represented by the projectivization of the universal sheaf on $\quot \times S$, and we will denote this scheme in terms of its closed points:
\begin{equation}
\label{eqn:flag scheme}
\flag = \Big\{ V \otimes \CO_S(-n) \stackrel{\phi}\twoheadrightarrow \CF \stackrel{\psi}\twoheadrightarrow \BC_x, \ x \in S \Big\}
\end{equation}
If we write $\CF' = \Ker \psi$ and $V' = \Ker \left(V \xrightarrow{\Gamma(\psi \circ \phi \otimes \CO_S(n))} \BC \right)$, then \eqref{eqn:flag scheme} reads:
\begin{equation}
\label{eqn:flag}
\flag = \left\{   \vcenter{\vbox{ \xymatrix{V \otimes \CO_S(-n) \ar@{->>}[r] & \CF \\
V' \otimes \CO_S(-n) \ar@{->>}[r] \ar@{^{(}->}[u] & \CF' \ar@{^{(}->}[u]}}} \right\}
\end{equation}
In this presentation, it is clear how to define the maps of schemes $\pi : \flag \rightarrow \quot$ 
and $\pi' : \flag \rightarrow \quot'$, where $\quot$ and $\quot'$ are the schemes \eqref{eqn:quot} defined with respect to the Hilbert polynomials $P$ and $P-1$, respectively. We therefore obtain:\begin{equation}
\label{eqn:flag to quot}
\flag \stackrel{\pi \times \pi'}\longrightarrow \quot \times \quot'
\end{equation}
with the following important property. The universal sheaves $\tCU$ and $\tCU'$ on $\quot \times S$ and $\quot' \times S$ are contained inside each other when pulled back to $\flag \times S$:
\begin{equation}
\label{eqn:inclusion}
{\pi'}^*(\tCU') \hookrightarrow \pi^*(\tCU)
\end{equation}
This follows from the construction of the moduli spaces $\flag$ and $\quot, \quot'$ as closed subschemes of a flag variety and Grassmannians, respectively. The universal sheaves on the moduli spaces are assembled from the universal bundles on flag varieties and Grassmannians, and therefore the inclusion \eqref{eqn:inclusion} boils down to the tautological inclusions between universal bundles on the flag variety. \\

\subsection{}

Since we are under Assumption A, all semistable sheaves are stable. Recall the open subscheme $R^s \subset \quot$ consisting of surjections \eqref{eqn:quot} which induce an isomorphism in cohomology, and where $\CF$ is stable. We write ${R'}^s \subset \quot'$ and $Q^s \subset \flag$ for the analogous open subschemes, and make the following observation: \\

\begin{proposition}
\label{prop:easy 1}

A point $z \in \emph{Flag}$ lies in $\pi^{-1}(R^s)$ iff it lies in ${\pi'}^{-1}({R'}^s)$.

\end{proposition}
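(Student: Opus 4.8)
The plan is to show that the two open conditions defining $\pi^{-1}(R^s)$ and ${\pi'}^{-1}({R'}^s)$ inside $\flag$ are in fact equivalent, point by point, by unpacking what each condition says about the corresponding sheaf in the flag. A point $z \in \flag$ corresponds to a diagram as in \eqref{eqn:flag}, with $\CF$ and $\CF'$ related by a Hecke modification $0 \to \CF' \to \CF \to \CO_x \to 0$. By definition, $z \in \pi^{-1}(R^s)$ means that $\CF$ is stable and that the surjection $V \otimes \CO_S(-n) \twoheadrightarrow \CF$ induces an isomorphism $V \xrightarrow{\sim} H^0(\CF \otimes \CO_S(n))$; similarly $z \in {\pi'}^{-1}({R'}^s)$ means $\CF'$ is stable and $V' \xrightarrow{\sim} H^0(\CF' \otimes \CO_S(n))$.

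First I would dispatch the stability half of the equivalence: this is exactly Proposition \ref{prop:hecke}, which says that under Assumption A, $\CF$ is stable if and only if $\CF'$ is stable, since $\CF$ and $\CF'$ share the same rank $r$ and first Chern class $c_1$. Next I would handle the cohomological normalization condition. The key input is that $n$ was chosen (as in Subsection \ref{sub:git}, following Theorem 3.3.7 of \cite{HL}) large enough so that for \emph{all} semistable sheaves with the relevant invariants—and, crucially by the remark after Proposition \ref{prop:choose}, the construction only depends on $r$ and $c_1 \cdot H$, not on $c_2$—one has $H^i(\CF \otimes \CO_S(n)) = 0$ for $i > 0$ and the evaluation map surjective. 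Moreover, \cite{HL} arrange $n$ so that if $\CF$ is globally generated by $\CO_S(n)$ then so is any stable colength $1$ subsheaf $\CF'$, and vice versa (this is precisely the fact invoked in the analysis of diagram \eqref{eqn:huge}). Given this, I would take the long exact sequence in cohomology associated to $0 \to \CF'(n) \to \CF(n) \to \CO_x \to 0$: the higher cohomology of both $\CF(n)$ and $\CF'(n)$ vanishes, so $0 \to H^0(\CF'(n)) \to H^0(\CF(n)) \to \BC \to 0$ is exact, forcing $\dim H^0(\CF'(n)) = \dim H^0(\CF(n)) - 1 = P(n) - 1 = \dim V'$. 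Then the map $V' \to H^0(\CF'(n))$ is a map of vector spaces of equal dimension, so it is an isomorphism iff it is surjective iff it is injective; and one checks that $V'\otimes\CO_S(-n)\twoheadrightarrow\CF'$ being the restriction of $V\otimes\CO_S(-n)\twoheadrightarrow\CF$ to the kernel makes $V' \to H^0(\CF'(n))$ factor compatibly with the isomorphism $V \cong H^0(\CF(n))$, so surjectivity of one evaluation map is equivalent to surjectivity of the other.

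The mildly delicate step—the one I'd expect to require the most care—is confirming that a \emph{single} choice of $n$ works simultaneously for the Quot schemes attached to Hilbert polynomials $P$ and $P-1$, i.e. for both $\CF$ and its colength $1$ subsheaves $\CF'$. This is not automatic from the boundedness statements applied separately, but it is exactly what \cite{HL} guarantees via the argument behind diagram \eqref{eqn:huge} (the global generation of $\CF(n)$ passing to stable colength $1$ subsheaves and back), so I would cite that and the $c_2$-independence of the construction. Once $n$ is fixed compatibly, the two cohomological conditions become logically equivalent by the dimension count above, and combined with Proposition \ref{prop:hecke} for the stability conditions, we conclude that $z \in \pi^{-1}(R^s)$ if and only if $z \in {\pi'}^{-1}({R'}^s)$, which is the assertion.
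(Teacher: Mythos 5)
Your proof is correct and takes the same core approach as the paper, but the paper's own proof is a single sentence: it simply declares the proposition a ``straightforward consequence of Proposition \ref{prop:hecke}'' and moves on. You have filled in the part the paper leaves implicit, namely that membership in $R^s$ is a conjunction of \emph{two} conditions --- stability of the sheaf \emph{and} that the classifying quotient induces an isomorphism $V \cong H^0(\CF \otimes \CO_S(n))$ --- and Proposition \ref{prop:hecke} only handles the first. Your treatment of the second condition via the long exact sequence of $0 \to \CF'(n) \to \CF(n) \to \CO_x \to 0$, the dimension count $\dim H^0(\CF'(n)) = P(n)-1 = \dim V'$, and the compatibility of the evaluation maps through the commuting square in \eqref{eqn:flag}, is exactly the right elaboration. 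One small nitpick: the remark you cite after Proposition \ref{prop:choose} concerns the class $B$ used to rigidify the universal sheaf, not the choice of $n$; the correct justification that a single $n$ works for both Hilbert polynomials $P$ and $P-1$ is the boundedness argument the paper invokes in the analysis of diagram \eqref{eqn:huge} (global generation by $\CO(mH)$ passing between $\CF$ and its stable colength~$1$ subsheaves). That said, this is a citation slip, not a gap in the argument, and the logical content of your proof is sound and more complete than what appears in the paper.
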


\tab 
Indeed, this is a straightforward consequence of Proposition \ref{prop:hecke}, and it implies that the map \eqref{eqn:flag to quot} gives rise to a fiber square:
\begin{equation}
\label{eqn:pet}
\xymatrix{
 Q^s \ar@{^{(}->}[r] \ar[d]_{(\pi \times \pi')^s} & \flag  \ar[d]^{\pi \times \pi'} \\
R^s \times {R'}^s \ar@{^{(}->}[r] & \quot \times \quot'}
\end{equation}

\begin{proposition}
\label{prop:easy 2}

The arrow $(\pi \times \pi')^s$ is a trivial $\BC^*$--bundle onto its image. \\

\end{proposition}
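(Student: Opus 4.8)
The plan is to exhibit the map $(\pi\times\pi')^s : Q^s \to R^s\times {R'}^s$ explicitly as the projectivization of a line bundle, or rather, to identify its image and show that over that image it becomes the trivial $\BC^*$-torsor. First I would recall the set-theoretic description coming from \eqref{eqn:flag}: a point of $Q^s$ over a pair $(\phi_1 : V\otimes\CO_S(-n)\twoheadrightarrow\CF)\in R^s$ and $(\phi_2 : V'\otimes\CO_S(-n)\twoheadrightarrow\CF')\in {R'}^s$ is the extra datum of a surjection $\psi : \CF \twoheadrightarrow \Gamma_*(\CL)$ with kernel $\CF'$, together with the compatibility $V' = \Ker(\psi\circ\phi_1)|_x$. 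Since $\CF/\CF'\cong\CO_x$, such a $\psi$ is, up to the $\BC^*$ of rescaling $\CL$, the same as the colength-$1$ subsheaf $\CF'\subset\CF$; and the image of $(\pi\times\pi')^s$ is exactly the locus of pairs $(\CF,\CF')$ for which $\CF'$ embeds in $\CF$ as such a subsheaf with the induced identification of cohomology spaces — i.e.\ the set-theoretic support of $\bfZ$ pulled back to $R^s\times{R'}^s$ via the two quotient maps. I would denote this image by $W\subset R^s\times{R'}^s$.

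The key step is to show the fiber of $(\pi\times\pi')^s$ over a point of $W$ is a single $\BC^*$-orbit, and then to trivialize the torsor globally. For the fiber count: given $(\CF,\CF')\in W$, the datum of $\psi$ with $\Ker\psi\cong\CF'$ compatibly with the fixed inclusion is the datum of a surjection $\CF\twoheadrightarrow\CF/\CF'\cong\CO_x$ with prescribed kernel, hence unique up to the automorphisms of $\CO_x$, which form $\BC^*$; the constraint $V'=\Ker(\psi\circ\phi_1)|_x$ is automatic once the inclusion $\CF'\subset\CF$ and the frames are fixed. So each fiber is a $\BC^*$-orbit, and the action is free because a nonzero scalar acting on $\CL\cong\CO_x$ moves $\psi$. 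For global triviality I would produce a section: over $W$, the sheaf $\CF/\CF'$ (formed from the universal sheaves on $R^s$ and ${R'}^s$ restricted to $W$, using that \eqref{eqn:inclusion} already supplies a canonical inclusion $\tCU'\hookrightarrow\tCU$ after pullback to $\flag$, and descends here because both $\tCU,\tCU'$ carry the weight-$1$ linearization so their quotient is canonically defined on $W$) is a line bundle on the graph, and choosing any trivialization — or better, observing that the relative $\sHom$ from $\tCU$ to this quotient line bundle along $W\times S$, pushed to $W$, is itself a line bundle whose total space minus the zero section maps isomorphically to $Q^s$ — exhibits $Q^s\to W$ as $\BC^*$-equivariantly isomorphic to the complement of the zero section of a line bundle on $W$. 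That is precisely the assertion that it is a trivial $\BC^*$-bundle onto its image (every $\BC^*$-torsor of the form "line bundle minus zero section" that admits a nowhere-vanishing section is trivial; and here the canonical comparison map $\CF\twoheadrightarrow\CF/\CF'$ furnishes the section).

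I expect the main obstacle to be the scheme-theoretic bookkeeping rather than the conceptual picture: one must check that $W$ really is locally closed in $R^s\times{R'}^s$ and that the relative Hom sheaf computing the fibers is locally free of rank $1$ there (a priori it is only the fiberwise-$1$-dimensional space $\Hom(\CF,\CO_x)$ cut down by the condition $\Ker\psi=\CF'$, and one needs flatness/base-change to conclude local freeness), and that the $\BC^*$-linearization matches on the nose. A clean way around part of this is to invoke the fiber square \eqref{eqn:pet}: $Q^s$ is the base change of $\flag$, and $\flag\to\quot\times\quot'$ is built from relative flag varieties of the universal bundles, so over the locus where both cohomology-frame conditions hold the fiber is literally the complement of the zero section in the line bundle $\sHom$ of the universal quotients. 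I would therefore organize the proof around \eqref{eqn:pet} and \eqref{eqn:inclusion}, deduce the torsor structure from the corresponding (manifestly trivial, because it comes from a line bundle) structure upstairs, and only at the end remark that freeness of the action plus the existence of the tautological section gives triviality.
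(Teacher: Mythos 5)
There is a genuine gap in the fiber count, and it is precisely the point the paper has to work for. You write that the fiber over $(\CF,\CF')\in W$ is ``the datum of a surjection $\CF\twoheadrightarrow\CF/\CF'\cong\CO_x$ with prescribed kernel, hence unique up to the automorphisms of $\CO_x$.'' But the base $R^s\times{R'}^s$ only records the two framed sheaves $\CF$ and $\CF'$; it does \emph{not} record any map between them, so the kernel of $\psi$, as a subsheaf of $\CF$, is not ``prescribed'' or ``fixed.'' The extra datum carried by a point of $Q^s$ is exactly the inclusion $\CF'\hookrightarrow\CF$ (equivalently $\psi$), and the content of the proposition is that this inclusion is unique up to scalar. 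That is a nontrivial claim: a priori $\CF$ could contain several colength-one subsheaves abstractly isomorphic to $\CF'$ which are not scalar multiples of one another, and then the fiber would be a disjoint union of several $\BC^*$-orbits. The paper closes this gap by proving $\Hom(\CF',\CF)\in\{0,\BC\}$ for stable $\CF',\CF$ of Hilbert polynomials $P-1,P$: any nonzero map is injective (Assumption A forbids intermediate reduced Hilbert polynomials), and composing with $\CF\hookrightarrow\CE:=\CF^{\vee\vee}$ reduces to $\Hom(\CF',\CE)\cong\BC$, which follows from simplicity of $\CE$ and the vanishing $\Ext^1(\CO_x,\CE)=0$ for locally free $\CE$ on a surface. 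Your proof sketch never addresses this and, in fact, quietly uses it again in the second half, when you assert that ``the relative $\sHom$ from $\tCU$ to this quotient line bundle, pushed to $W$, is itself a line bundle'' --- that pushforward has rank-one fibers only because of the same $\Hom$ bound.

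The triviality argument is also circular as written. The ``canonical comparison map $\CF\twoheadrightarrow\CF/\CF'$'' and the quotient sheaf $\tCU/\tCU'$ only exist on $Q^s\times S$, where the inclusion \eqref{eqn:inclusion} lives; they do not descend to $W\times S$ on the nose (indeed, the resulting $\CO(1)$ is the very line bundle whose frame bundle you are trying to trivialize, so producing a section of it on $W$ is equivalent to the claim, not a proof of it). The paper's proof establishes only that the fibers are single $\BC^*$-orbits, i.e.\ that $(\pi\times\pi')^s$ is a $\BC^*$-torsor onto its image, and this (together with Zariski-local triviality of $\BC^*$-torsors) is what is actually used downstream. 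If you want to keep your packaging, you should (i) prove the $\Hom(\CF',\CF)\le\BC$ bound as the paper does, and (ii) either weaken ``trivial'' to ``Zariski-locally trivial'' or exhibit an actual section over $W$ rather than over $Q^s$.
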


\begin{proof} The claim boils down to the statement that if the horizontal arrows are given in diagram \eqref{eqn:flag}, then the vertical arrows are uniquely determined up to constant multiple. Because the vertical map on the left in \eqref{eqn:flag} is $H^0$ applied to the vertical map on the right $\otimes \ \CO_S(n)$, it is enough to prove that:
\begin{equation}
\label{eqn:0 or c}
\CF',\CF \text{ stable} \quad \Rightarrow \quad \Hom(\CF',\CF) = 0 \text{ or }\BC
\end{equation}
where $\CF'$ and $\CF$ have Hilbert polynomials $P-1$ and $P$, respectively. Assumption A implies that any non-zero homomorphism $\CF'\rightarrow \CF$ must be injective, since otherwise the image of such a homomorphism would have the impossible property that its reduced Hilbert polynomial is strictly contained between $P-1$ and $P$. So we assume that there exists an injection $i : \CF' \hookrightarrow \CF$, and we must prove that it is the only one up to constant multiple. Composing $\iota$ with the finite colength injection $j: \CF \hookrightarrow \CE := \CF^{\vee\vee}$, it is enough to show that $\Hom(\CF', \CE) = \BC$. But from the long exact sequence associated to $j \circ i$, we obtain:
$$
0 \longrightarrow \Hom(\CE, \CE) \longrightarrow \Hom(\CF', \CE) \longrightarrow \Ext^1(Q, \CE) \longrightarrow...
$$
where $Q = \CE/\CF'$ is a finite length sheaf. Since the double dual $\CE$ is stable (as follows from Proposition \ref{prop:hecke}), the space on the left is $\BC$, and since the double dual is locally free, the space on the right is 0. To elaborate the last claim, take a Jordan-Holder filtration of $Q$, so it is enough to prove that $\Ext^1(\BC_x, \CE) = 0$ for any closed point $x\in S$. Since $\CE$ is locally free, this is equivalent to the fact that there are no non-trivial extensions between the residue field and a free module over a regular local ring of dimension $\geq 2$.

\end{proof}

\noindent Consider the action of $GL(V) \times GL(V')$ on $\flag$ given by acting on the vector spaces $V$ and $V'$, and note that the vertical arrows in \eqref{eqn:pet} are equivariant with respect to the action (implicit in this is the fact that $Q^s$ and its closure $Q$ are preserved by the action). Consider the line bundle $L_m$ on $\quot$ defined in \eqref{eqn:line bundle} and the analogous $L_m'$ on $\quot'$. Then Lemma \ref{lem:fundamental} and Proposition \ref{prop:easy 1} imply that: \\

\begin{proposition}
\label{prop:easy 3}

The open set $Q^s$ is the locus of stable points of the action $GL(V) \times GL(V') \curvearrowright Q$, with respect to the line bundle $L_m \boxtimes L_m'$.

\end{proposition}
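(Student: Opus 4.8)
The plan is to reduce the statement to Lemma~\ref{lem:fundamental} by means of the Hilbert--Mumford criterion of Definition~\ref{def:git}, using that a one-parameter subgroup of $GL(V) \times GL(V')$ is a pair $\lambda = (\lambda_1,\lambda_2)$ and that the relevant $\BC^*$--weight is additive in the two factors. First I would record three preliminary facts. (i) The line bundle $L_m \boxtimes L_m'$ on $\flag$, hence on its closed (and therefore projective) subscheme $Q = \overline{Q^s}$, is $\pi^*L_m \otimes {\pi'}^*L_m'$, i.e.\ the pull-back of $L_m \boxtimes L_m'$ along the $GL(V) \times GL(V')$--equivariant morphism $\pi \times \pi'$ of \eqref{eqn:flag to quot}. (ii) By the fiber square \eqref{eqn:pet} we have $Q^s = \pi^{-1}(R^s) = {\pi'}^{-1}({R'}^s)$, and since Assumption~A gives $R = \overline{R^{ss}} = \overline{R^s}$, we get $\pi(Q) \subseteq R$; by Lemma~\ref{lem:fundamental} (and its primed analogue) the open sets $R^s$ and ${R'}^s$ are exactly the stable loci of $GL(V) \curvearrowright R$ and $GL(V') \curvearrowright R'$. (iii) Since $\pi$ and $\pi'$ commute with limits of one-parameter orbits, for $q \in Q$ and $\lambda = (\lambda_1,\lambda_2)$ the point $\lim_{t \to 0}\lambda(t)\cdot q$ maps under $\pi$ to $\lim_{t \to 0}\lambda_1(t)\cdot\pi(q)$ and under $\pi'$ to $\lim_{t \to 0}\lambda_2(t)\cdot\pi'(q)$, and the $\BC^*$--weight of $L_m \boxtimes L_m'$ at $\lim_{t \to 0}\lambda(t)\cdot q$ equals the sum of the $\BC^*$--weight of $L_m$ at $\lim_{t \to 0}\lambda_1(t)\cdot\pi(q)$ and the $\BC^*$--weight of $L_m'$ at $\lim_{t \to 0}\lambda_2(t)\cdot\pi'(q)$.

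Granting these, the forward inclusion runs as follows. If $q \in Q^s$ then $\pi(q) \in R^s$ and $\pi'(q) \in {R'}^s$; given a non-trivial $\lambda = (\lambda_1,\lambda_2)$, at least one of $\lambda_1,\lambda_2$ is non-trivial, and by (ii) whichever is non-trivial contributes a strictly negative term to the weight sum of (iii), while a trivial factor contributes $0$. Hence the total weight is negative and $q$ is a stable point of $GL(V) \times GL(V') \curvearrowright Q$. Conversely, if $q \in Q \setminus Q^s$ then $\pi(q) \in R \setminus R^s$, so by (ii) there is a non-trivial $\lambda_1 : \BC^* \to GL(V)$ with the $\BC^*$--weight of $L_m$ at $\lim_{t \to 0}\lambda_1(t)\cdot\pi(q)$ at least $0$; applying (iii) to the non-trivial one-parameter subgroup $(\lambda_1,\mathrm{id})$, whose second factor contributes weight $0$, shows that $q$ is not a stable point. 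Therefore the stable locus of $GL(V) \times GL(V') \curvearrowright Q$ equals $Q^s$, as claimed.

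The only delicate point, and the one carrying essentially all the content not already in Lemma~\ref{lem:fundamental}, is (iii): one must check that for a product group acting on the pull-back of an external tensor product of linearized line bundles the Hilbert--Mumford weight splits as a sum over the two factors, and that this sum is unchanged under pull-back along the equivariant morphism $\pi \times \pi'$. This amounts to identifying $\lim_{t \to 0}\lambda(t)\cdot q$ with a $\lambda$--fixed point of the projective scheme $Q$, pushing it into $\quot \times \quot'$, and comparing the $\BC^*$--representation on the fiber of $L_m \boxtimes L_m'$ with those on the fibers of $L_m$ and $L_m'$ separately; this is routine but is the one genuine verification, everything else being formal from the GIT setup and Lemma~\ref{lem:fundamental}.
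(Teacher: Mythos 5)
Your proposal is correct and follows the same approach the paper implicitly takes: the paper states Proposition~\ref{prop:easy 3} as an immediate consequence of Lemma~\ref{lem:fundamental} and Proposition~\ref{prop:easy 1} without spelling out the argument, and your write-up supplies exactly that missing verification via additivity of the Hilbert--Mumford weight for a product group acting on an external tensor product of linearized bundles pulled back along the equivariant map $\pi \times \pi'$. The two directions (using $(\lambda_1,\lambda_2)$ and $(\lambda_1,\mathrm{id})$ respectively, together with $Q^s = \pi^{-1}(R^s) = {\pi'}^{-1}({R'}^s)$ and $\pi(Q) \subseteq R = \overline{R^s}$) are carried out correctly.
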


\tab 
Therefore, there exists a geometric quotient: 
\begin{equation}
\label{eqn:def z}
\bfZ := Q^s/GL(V) \times GL(V') \hookrightarrow R^s/GL(V) \times {R'}^s/GL(V') = \CM \times \CM'
\end{equation}
and to ensure that it has the desired properties, we must prove the following facts: \\

\begin{proposition}
\label{prop:easy 4}

The injective map of universal sheaves \eqref{eqn:inclusion} on $Q^s \times S$ descends to an injective map of sheaves \eqref{eqn:incl} on $\bfZ \times S$. \\

\end{proposition}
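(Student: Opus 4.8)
The plan is to deduce \eqref{eqn:incl} from \eqref{eqn:inclusion} by descending a morphism of equivariant sheaves along the faithfully flat quotient map, using the twisting construction of the universal sheaf recalled in Subsection \ref{sub:universal}. First I would collect the relevant structure: by \eqref{eqn:construction}--\eqref{eqn:def z}, $\bfZ$ is the geometric quotient of $Q^s$ by $G := GL(V) \times GL(V')$, and by the proof of Proposition \ref{prop:easy 2} the stabilizer of every point of $Q^s$ in $G$ is the diagonal $\BC^* \hookrightarrow \BC^* \times \BC^* = Z(G)$ (which acts trivially on $Q^s$); hence the quotient map $p : Q^s \times S \to \bfZ \times S$ is a torsor under $G$ modulo this diagonal, in particular faithfully flat. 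Recall also that the universal sheaves exist only after a twist: $\CU$ on $\CM \times S$ is the descent of $\tCU \otimes p_1^* A^{-1}$ with $A = \det p_{1*}(\tCU \otimes p_2^* B)$ of $\BC^*_{GL(V)}$--weight $1$, and symmetrically $\CU'$ on $\CM' \times S$ is the descent of $\tCU' \otimes p_1^* {A'}^{-1}$ with $A'$ of $\BC^*_{GL(V')}$--weight $1$. By the Cartesian square \eqref{eqn:pet} (base changed by $\times S$), writing $P_1 : Q^s \times S \to Q^s$ for the projection and $\pi, \pi'$ for the maps of \eqref{eqn:pet}, the restrictions $\CU|_{\bfZ \times S}$ and $\CU'|_{\bfZ \times S}$ are the $G$--descents of $\pi^* \tCU \otimes P_1^* \pi^* A^{-1}$ and of ${\pi'}^* \tCU' \otimes P_1^* {\pi'}^* {A'}^{-1}$ on $Q^s \times S$, respectively.

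The key point is that the two twists $\pi^* A$ and ${\pi'}^* A'$ differ by a line bundle pulled back from $\bfZ$. As $G$--equivariant line bundles on $Q^s$, $\pi^* A$ has $Z(G)$--weight $(1,0)$ while ${\pi'}^* A'$ has $Z(G)$--weight $(0,1)$, so $N := {\pi'}^* A' \otimes (\pi^* A)^{-1}$ has $Z(G)$--weight $(-1,1)$; the stabilizer being the diagonal $\BC^*$, it acts trivially on the fibers of $N$, so $N$ descends to a line bundle $\overline{N}$ on $\bfZ$. One can even identify $\overline{N}$: since the class $B$ of Proposition \ref{prop:choose} depends only on $r$, $c_1 \cdot H$ and has rank $0$, one has $\chi(S, \CF' \otimes B) = \chi(S, \CF \otimes B) - \chi(S, \CO_x \otimes B) = 1$, so the \emph{same} $B$ serves for both Hilbert polynomials, and feeding \eqref{eqn:ses universal} into $\det Rp_{1*}(- \otimes p_2^* B)$ yields $\overline{N} = p_S^*(\det B)^{-1}$ --- though this will not be needed.

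The descent is then routine. I would twist \eqref{eqn:inclusion} by $P_1^*(\pi^* A)^{-1}$: the target becomes $\pi^* \tCU \otimes P_1^* \pi^* A^{-1}$, which descends to $\CU|_{\bfZ \times S}$, and the source becomes ${\pi'}^* \tCU' \otimes P_1^* \pi^* A^{-1} \cong ({\pi'}^* \tCU' \otimes P_1^* {\pi'}^* {A'}^{-1}) \otimes P_1^* N$, which descends to $\CU'|_{\bfZ \times S} \otimes \text{pr}_1^* \overline{N}$, where $\text{pr}_1 : \bfZ \times S \to \bfZ$. Both satisfy the descent condition because the diagonal $\BC^*$ acts trivially on their fibers --- exactly what the twists by $A$ and $A'$ were designed to ensure, and unaffected by the further twist by $N$, which has $Z(G)$--weight $(-1,1)$ and so is trivial on the diagonal. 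A $G$--equivariant morphism between two sheaves that both descend descends to a morphism on $\bfZ \times S$, giving $\CU'|_{\bfZ \times S} \otimes \text{pr}_1^* \overline{N} \to \CU|_{\bfZ \times S}$. Since a universal sheaf on $\bfZ \times S$ is determined only up to tensoring with a line bundle pulled back from $\bfZ$, I would replace the universal sheaf $\CU'$ on $\bfZ \times S$ by its twist $\CU'|_{\bfZ \times S} \otimes \text{pr}_1^* \overline{N}$ (still a valid universal sheaf), obtaining the map $\CU' \to \CU$ of \eqref{eqn:incl}. It is injective because $p$ is faithfully flat --- injectivity of a morphism of coherent sheaves may be checked after the base change $p$ --- and $p^*$ of the descended morphism is the twist of \eqref{eqn:inclusion} by $P_1^*(\pi^* A)^{-1}$, which is injective since \eqref{eqn:inclusion} is.

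The main obstacle is the middle paragraph: a priori $\tCU$ and $\tCU'$ require \emph{different} twisting line bundles to become descendable, which would obstruct descending \eqref{eqn:inclusion} directly; the resolution is the weight bookkeeping showing that the discrepancy $N$ is harmless, which hinges on the stabilizer of $Q^s$ being the \emph{diagonal} $\BC^*$ (Proposition \ref{prop:easy 2}) --- equivalently, on the fact (emphasized in Subsection \ref{sub:universal}) that $B$ may be chosen independently of $c_2$.
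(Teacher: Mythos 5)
Your proof is correct and follows essentially the same strategy as the paper's: twist the inclusion \eqref{eqn:inclusion} by a line bundle so that both sides become $GL(V)\times GL(V')$--equivariant with trivial action of the (diagonal) $\BC^*$--stabilizer, then descend. Where you go further is in the bookkeeping of the twist: the paper simply asserts the isomorphism $\pi^*(A)\cong {\pi'}^*(A')$ on $Q^s$ and twists by its inverse, whereas you correctly observe that a priori these two line bundles differ by $N = {\pi'}^*A'\otimes(\pi^*A)^{-1}$ (which, by the computation you sketch from \eqref{eqn:ses universal}, is $p_S^*(\det B)^{-1}$, and this is not trivial when $r>1$ since $\det B = \CO_S(bH)$ with $b\neq 0$). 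Your resolution --- noting that $N$ has $Z(G)$--weight $(-1,1)$, hence trivial weight for the diagonal stabilizer, hence descends to a line bundle $\overline N$ on $\bfZ$, which can then be absorbed into the ambiguity in the choice of universal sheaf $\CU'$ on $\bfZ\times S$ --- is exactly what is needed to make the argument watertight, and it isolates the precise role of the fact (emphasized in Subsection \ref{sub:universal}) that $B$ depends only on $r$ and $c_1\cdot H$ and not on $c_2$. The final injectivity-via-faithfully-flat-base-change step is also correct. In short, this is the same argument as the paper's, but the handling of the twist discrepancy is more precise than what the paper wrote down.
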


\begin{proof} Indeed, recall that the universal sheaf $\CU$ on $\CM \times S$ was obtained from $\tCU$ on $\quot \times S$ by descending the coherent sheaf \eqref{eqn:real universal}. Since the line bundle $A$ is defined by \eqref{eqn:def a} with $B$ being a fixed linear combination of $\CO_{\pt}$ and $\CO_H$, we have:
$$
\pi^*(A) \cong {\pi'}^*(A') := \alpha \in \text{Pic}(Q^s)
$$
Letting $p_1 : Q^s \times S \rightarrow Q^s$ denote the projection, the injection $\tCU \hookrightarrow \tCU'$ yields:
\begin{equation}
\label{eqn:inj}
\pi^*(\tCU) \otimes p_1^*(\alpha^{-1}) \hookrightarrow {\pi'}^*(\tCU') \otimes p_1^*(\alpha^{-1}) \in \text{Coh} ( Q^s \times S )
\end{equation}
Descending \eqref{eqn:inj} to the $GL(V) \times GL(V')$ quotient gives the map \eqref{eqn:incl} on $\bfZ \times S$.

\end{proof}

\begin{proposition}
\label{prop:easy 5}

The geometric quotient $\bfZ$ of \eqref{eqn:def z} represents the functor $\sZ$. \\

\end{proposition}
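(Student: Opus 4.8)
The plan is to follow the template used in \cite{HL} to build the moduli space $\CM$ itself, exploiting that $\bfZ$ has already been realized as the geometric quotient $Q^s/(GL(V) \times GL(V'))$ of the open subscheme $Q^s \subset \flag$ of the \emph{rigidified} moduli problem. First I would assemble the universal family on $\bfZ$: by Proposition \ref{prop:easy 4} the inclusion of universal sheaves \eqref{eqn:inclusion} descends to $\CU' \hookrightarrow \CU$ on $\bfZ \times S$, its cokernel is flat over $\bfZ$ and supported on the graph $\Gamma \subset \bfZ \times S$ of $p_S : \bfZ \rightarrow S$, hence is of the form $\Gamma_*(\CL)$ for a line bundle $\CL$ on $\bfZ$, and we obtain a surjection $\psi^{\mathrm{univ}} : \CU \twoheadrightarrow \Gamma_*(\CL)$. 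The quadruple $(p_S, \CL, \CU, \psi^{\mathrm{univ}})$ is a $\bfZ$-point of $\sZ$, so it defines a natural transformation $\mathrm{Maps}(-, \bfZ) \Rightarrow \sZ$, and the Proposition asserts this is an isomorphism of functors.

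To invert it, I would fix $(\phi, \CL, \CF, \psi) \in \sZ(T)$ and set $\CF' = \Ker \psi$. The short exact sequence $0 \rightarrow \CF' \rightarrow \CF \rightarrow \Gamma_*(\CL) \rightarrow 0$ of sheaves on $T \times S$, with $\CF$ and $\Gamma_*(\CL)$ flat over $T$, forces $\CF'$ to be flat over $T$; its fibres $\CF'_t \subset \CF_t$ have colength one and are stable by Proposition \ref{prop:hecke}, so $\CF'$ is a $T$-flat family of stable sheaves with Hilbert polynomial $P-1$. Boundedness of the families of stable sheaves with Hilbert polynomials $P$ and $P-1$, together with the fact (already used in the analysis of diagram \eqref{eqn:huge}) that for $n$ large a stable colength-one subsheaf of an $\CO_S(n)$-globally generated sheaf is again $\CO_S(n)$-globally generated, lets us fix one $n$ for which $p_{1*}(\CF \otimes p_2^*\CO_S(n))$ and $p_{1*}(\CF' \otimes p_2^*\CO_S(n))$ are locally free on $T$ of ranks $P(n)$ and $P(n)-1$, with vanishing higher direct images and relatively globally generated fibres. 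Over an open cover of $T$ trivializing these two bundles as $V \otimes \CO_T$ and $V' \otimes \CO_T$, the induced surjections $V \otimes \CO_S(-n) \twoheadrightarrow \CF$ and $V' \otimes \CO_S(-n) \twoheadrightarrow \CF'$, together with $\phi, \CL, \psi$, furnish a $T$-point of $\flag$ landing in $Q^s$ by Proposition \ref{prop:easy 1}; composing with $Q^s \rightarrow \bfZ$ gives a local map to $\bfZ$. Two choices of trivialization differ by a section of $GL(V) \times GL(V')$, which acts along the fibres of $Q^s \rightarrow \bfZ$ (Propositions \ref{prop:easy 2}, \ref{prop:easy 3}), so these local maps glue to a global $g : T \rightarrow \bfZ$ independent of all choices.

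It then remains to check the two assignments are mutually inverse. Running the previous construction on the universal quadruple returns $g$ itself, by the universal property of $\flag$ (representing the rigidified problem) and of the geometric quotient $Q^s \rightarrow \bfZ$; conversely, $g^*$ of the universal quadruple agrees with $(\phi, \CL, \CF, \psi)$ up to the same tensoring by a line bundle pulled back from $T$ that already enters the construction of $\CU$ on $\CM \times S$ — the descended sheaf $\CU$ being that of $\tCU \otimes p_1^*(A^{-1})$ with $A$ the determinant line bundle of \eqref{eqn:def a}, depending only on $r$ and $c_1 \cdot H$ — and the rigidifying datum $\psi$ pins down the residual scalar, so the match is exact in the sense required of a fine moduli space (cf. \cite{HL}). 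I expect the main obstacle to be precisely this last bundle of verifications: controlling the determinant twist carefully enough that $\psi$ is matched on the nose rather than merely up to twist, and confirming that $Q^s \rightarrow \bfZ$ is a principal bundle for $(GL(V) \times GL(V'))/\BC^*$ acting freely — the diagonal central $\BC^*$ acting trivially on $\flag$ because rescaling a surjection $V \otimes \CO_S(-n) \twoheadrightarrow \CF$ by a scalar does not change the corresponding quotient — which is what legitimizes the gluing in the previous paragraph. The geometric inputs (flatness of $\CF'$, stability of its fibres, the existence of a uniform $n$) are all supplied by Proposition \ref{prop:hecke} and \cite{HL}.
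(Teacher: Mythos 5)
Your argument matches the paper's proof in essence: both sides form $\CF' = \ker\psi$, apply $p_{1*}\bigl((-)\otimes p_2^*\CO(n)\bigr)$ for a uniformly large $n$ to obtain locally free sheaves on $T$, invoke the universal property of $\flag$ to classify the rigidified data into $Q^s$, and descend to $\bfZ$ using that the quotient \eqref{eqn:def z} is geometric. The only presentational difference is that the paper packages your local-trivialization-and-gluing step into the single $GL(V)\times GL(V')$-torsor $\mathrm{Frame}\to T$ parametrizing all trivializations simultaneously, and the technical worries you flag at the end (the determinant twist, the central stabilizer) are precisely what the paper absorbs into Proposition \ref{prop:easy 4} and its citation of Lemma 4.3.1 and Proposition 4.6.2 of \cite{HL}.
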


\begin{proof} The proof closely follows those of Lemma 4.3.1 and Proposition 4.6.2 of \cite{HL}, so we will just sketch the main ideas. An element in $\sZ(T)$ consists of the datum in the first 4 bullets in Subsection \ref{sub:flag}, which essentially boils down to an injective map $\CF'_T \hookrightarrow \CF_T$ of flat families of stable sheaves on $T \times S$, which has colength 1 above any closed point of $T$. For $n$ large enough (the ability to choose such an $n$ follows from the boundedness of the family of stable sheaves) we may use the standard projections:
$$
p_1 : T \times S \rightarrow T \qquad \text{and} \qquad p_2 : T \times S \rightarrow S
$$
to define the locally free sheaves $V'_T = p_{1*}(\CF' \otimes p_2^*\CO(n)) \hookrightarrow V_T = p_{1*}(\CF \otimes p_2^*\CO(n))$ on $T$. Consider the principal $GL(V) \times GL(V')$--bundle $\text{Frame} \rightarrow T$ which parametrizes all trivializations of the locally free sheaves $V'_T$ and $V_T$. The universal property of the scheme $\flag$ that represents the data in the last 4 bullets in Subsection \ref{sub:flag} implies the existence of a classifying homomorphism:
$$
\eta : \text{Frame} \longrightarrow \flag
$$
which actually takes values in $Q^s$, due to the fact that the families $\CF'_T$ and $\CF_T$ were stable to begin with. The homomorphism $\eta$ is $GL(V) \times GL(V')$--equivariant, and therefore descends to a map $T \longrightarrow \bfZ$ since the quotient \eqref{eqn:def z} is geometric. Finally, the fact that the pull-back of the universal sheaves under $\eta$ gives the inclusion $\CF' \hookrightarrow \CF$ can be descended to the level of the $GL(V) \times GL(V')$--quotient, thus proving the fact that $\bfZ$ represents the functor $\sZ$. 

\end{proof}

\subsection{}
\label{sub:push}

We now turn to certain computations in algebraic $K$--theory. Hereafter, $X$ will denote the dg scheme cut out by the Koszul complex of a section of a locally free sheaf on a Noetherian scheme. All coherent sheaves considered will be on $X$. \\

\begin{proposition}
\label{prop:general proj}

Let $U$ be a coherent sheaf of projective dimension 1, and recall:
$$
\pi:\BP_X (U) \rightarrow X \qquad \text{and} \qquad \pi' : \BP_X(U^\vee[1]) \rightarrow X
$$
the dg schemes considered in Subsection \ref{sub:derived bundles}. Then we have:
\begin{equation}
\label{eqn:push taut gen 1}
\pi_* \left[ \delta \left( \frac {\CO(1)}z \right) \right] = \wedge^\bullet\left( - \frac {U} z \right) \b
\end{equation}
\begin{equation}
\label{eqn:push taut gen 2}
\pi'_* \left[ \delta \left( \frac {\CO(-1)}z \right) \right] = \ \wedge^\bullet\left(\frac z{U} \right) \Big|_{0 - \infty}
\end{equation}
where the right-hand side is defined as in \eqref{eqn:expand}. Recall that $\frac 1{U}$ denotes $U^\vee$. \\ 

\end{proposition}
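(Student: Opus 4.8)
The plan is to reduce both identities to the classical case of an actual vector bundle, where the pushforward of powers of $\CO(1)$ along a projective bundle is computed by the well-known ``geometric series'' formula, and then deform through the derived structure. For \eqref{eqn:push taut gen 1}, recall from \eqref{eqn:derived proj 1} that $\BP_X(U)$ is cut out inside $\BP_X(V)$ by the derived vanishing of the section $s: \rho^*(W) \to \CO(1)$, so that $\pi = \rho \circ \iota$ with $\iota$ the derived closed embedding. First I would compute $\iota_*$ of the structure sheaf: since $\BP_X(U)$ is by definition $\text{Spec}$ of the Koszul complex $\wedge^\bullet(\rho^*(W)^\vee \otimes \CO(-1))$, we have in $K$-theory $\iota_*[\CO_{\BP_X(U)}] = \wedge^\bullet(\rho^*(W)^\vee \otimes \CO(-1)) = \wedge^\bullet(-\rho^*W \otimes \CO(-1))$ by \eqref{eqn:dual wedge}, up to the standard sign/determinant bookkeeping. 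Hence $\pi_*[\CO(1)^{\otimes n}] = \rho_*\left[ \CO(n) \cdot \wedge^\bullet(-\rho^*W \otimes \CO(-1)) \right]$, and the whole computation is now on the honest projective bundle $\rho: \BP_X(V) \to X$.

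The second step is the projective-bundle computation itself. Introduce the formal variable $z$ and sum over $n \in \BZ$: $\pi_*[\delta(\CO(1)/z)] = \rho_*\left[ \delta(\CO(1)/z) \cdot \wedge^\bullet(-\rho^*W \otimes \CO(-1)) \right]$. Using the $\delta$-function property \eqref{eqn:fundamental property} to replace $\CO(1)$ by $z$ inside the exterior power, this becomes $\wedge^\bullet(-\rho^* W / z) \cdot \rho_*[\delta(\CO(1)/z)]$, so it remains to evaluate $\rho_*[\delta(\CO(1)/z)]$ on an honest projective bundle $\BP_X(V)$ with $V = W \oplus U$ (at the level of $K$-theory classes, $[V] = [W] + [U]$). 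The standard fact — which can be packaged exactly in the contour-integral notation \eqref{eqn:def single} — is that $\rho_*[\delta(\CO(1)/z)] = \frac{1}{\wedge^\bullet(-V^\vee \otimes z^{-1})}\b$ or, after dualizing via \eqref{eqn:dual wedge}, $\frac{1}{\wedge^\bullet(V/z)}\b$ up to the determinantal twist; multiplying by the prefactor $\wedge^\bullet(-W/z)$ and using multiplicativity \eqref{eqn:multiplicative} with $[V] = [W]+[U]$ collapses the product to $\wedge^\bullet(-U/z)\b$, which is \eqref{eqn:push taut gen 1}. The identity \eqref{eqn:push taut gen 2} is entirely parallel, starting from the description \eqref{eqn:derived proj 2} of $\BP_X(U^\vee[1])$ as the derived zero locus inside $\BP_X(W^\vee)$ of the section $\CO(-1) \to {\rho'}^*(W) \hookrightarrow {\rho'}^*(V)$, i.e. of a section of ${\rho'}^*(V) \otimes \CO(1)$; the Koszul resolution now contributes $\wedge^\bullet(-{\rho'}^*V \otimes \CO(1))$, one replaces $\CO(-1)$ by $z$, and the projective-bundle pushforward over $\BP_X(W^\vee)$ combines with it to give $\wedge^\bullet(z/U)\b$.

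The main obstacle I anticipate is \emph{bookkeeping rather than conceptual}: keeping careful track of the determinantal twists and signs in \eqref{eqn:dual wedge}, of which factor ($\rho^*W$ vs. $\CO(-1)$) the Koszul differential is built on, and of the direction in which each exterior power series is expanded (around $0$ vs. $\infty$) so that the final answer genuinely lands in the $\b = \i$ prescription of \eqref{eqn:expand}–\eqref{eqn:coeff series} rather than in one of the naive one-sided expansions. A secondary subtlety is justifying the projective-bundle pushforward formula for $\delta(\CO(1)/z)$ — i.e. the bi-infinite analog of the Grothendieck relation — in the generality of the $K$-theory used here (cohomologically bounded coherent sheaves); this is where one invokes that $\rho$ and $\rho'$ are honest $\BP$-bundles and $\iota, \iota'$ are derived lci closed embeddings, so all pushforwards and Koszul computations are legitimate, and the identity is ultimately the statement that the alternating sum of $H^\bullet$ of twists of $\CO$ on projective space, summed over all twists, produces the difference of the two geometric-series expansions — precisely the residue interpretation in \eqref{eqn:def single}.
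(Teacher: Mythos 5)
Your proposal follows essentially the same route as the paper: factor $\pi = \rho \circ \iota$ through the honest projective bundle $\rho:\BP_X(V) \to X$, push $\iota_*$ via the Koszul class of the section $\rho^*(W) \to \CO(1)$, use the $\delta$-function property \eqref{eqn:fundamental property} to replace $\CO(1)$ by $z$ inside the exterior-power factor, reduce to the locally free case for $\rho_*$ (the paper gets this from Exercise III.8.4 of \cite{H}), and combine via multiplicativity \eqref{eqn:multiplicative} and $[U]=[V]-[W]$. One slip in the write-up that you already anticipated as ``bookkeeping'': the Koszul factor is $\wedge^\bullet(\rho^*(W)\otimes\CO(-1))$ in the paper's convention (so $\iota_*$ multiplies by $\wedge^\bullet(\rho^*(W)/\CO(1))$), not $\wedge^\bullet(\rho^*(W)^\vee\otimes\CO(-1))$ nor $\wedge^\bullet(-\rho^*W\otimes\CO(-1))$ — the latter would be a symmetric algebra in the $\wedge^\bullet(-x\cdot\,)$ notation of \eqref{eqn:multiplicative}, and the extra dual would spoil the collapse to $\wedge^\bullet(-U/z)$.
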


\begin{proof} When $U$ is locally free of rank $n$, Exericise III.8.4 of \cite{H} establishes:
$$
\pi_* [\CO(i)] = [R\pi_* \CO(i)] = \begin{cases} S^i U & \text{ if } i \geq 0 \\
0 & \text{ if } - n < i < 0  \\
(-1)^{n-1} S^{-i-n} U^\vee \otimes \left(\det U\right)^{-1} & \text{ if } i \leq -n
\end{cases} 
$$
Summing the above expression over all $i \in \BZ$ establishes \eqref{eqn:push taut gen 1}, since:
$$
\wedge^\bullet\left( - \frac Uz \right) = \sum_{i=0}^\infty \frac {[S^i U]}{z^i} \qquad \text{near } z = \infty
$$
$$
\wedge^\bullet\left( - \frac Uz \right) = (-1)^n \left( \det U \right)^{-1} \sum_{i=0}^\infty \frac {[S^i U^\vee]}{z^{-n-i}} \qquad \text{near } z = 0
$$
This proves the Proposition when $U$ is locally free. More generally, let us take $U \cong V/W$ with $V$ and $W$ locally free. As in \eqref{eqn:derived proj 1}, $\BP_X (U)$ is the dg subscheme of $\BP_X (V)$ defined as the virtual zero locus of the composed map:
$$
\rho^*(W) \hookrightarrow \rho^*(V) \stackrel{\text{taut}}\twoheadrightarrow \CO_{\BP_X(V)}(1)
$$
where the projection maps are as in the following diagram:
\begin{equation}
\label{eqn:diagram proj}
\xymatrix{
& \BP_X(U) \ar@{^{(}->}[r]^{\iota} \ar[rd]_\pi & \BP_X(V) \ar[d]^\rho \\
& & X \\}
\end{equation}
The corresponding push-forward map $\iota_*$ is defined in $K$--theory by replacing the structure sheaf of the dg subscheme $\BP_X(U) \hookrightarrow \BP_X(V)$ with the exterior algebra of the locally free sheaf which ``cuts it out", i.e. $\rho^*(W) \otimes \CO_{\BP_X(V)}(-1)$. Therefore:
$$
\pi_* \left[ \delta \left( \frac {\CO(1)}z \right) \right]  = \rho_* \left[ \delta \left( \frac {\CO_{\BP_X(V)}(1)}z \right) \wedge^\bullet \left( \frac {\rho^*(W)}{\CO_{\BP_X(V)}(1)} \right) \right] =
$$
$$
= \rho_* \left[ \delta \left( \frac {\CO(1)_{\BP_X(V)}}z \right) \cdot \wedge^\bullet \left(\frac {\rho^*(W)}z\right) \right] = \frac {\wedge^\bullet\left(\frac {W} z \right)}{\wedge^\bullet\left(\frac {V} z \right)} \Big|_{z = \infty} - \frac {\wedge^\bullet\left(\frac {W} z \right)}{\wedge^\bullet\left(\frac {V} z \right)} \Big|_{z = 0}
$$
where in the second equality we invoke the fundamental property \eqref{eqn:fundamental property} of $\delta$ functions, and in the third equality we invoke Proposition \ref{prop:general proj} for the locally free sheaf $V$, which we proved. Using the fact that $[U] = [V] - [W]$ and \eqref{eqn:multiplicative}, we conclude \eqref{eqn:push taut gen 1}. Formula \eqref{eqn:push taut gen 2} is proved similarly, so we leave it to the interested reader. 
 
\end{proof}

\subsection{}
\label{sub:blow}

For a locally free sheaf $U$ on a scheme $X$, recall that the projectivization $\BP_X(U) \rightarrow X$ represents the functor that associates to a scheme $T$ the set of triples: \\

\begin{itemize}

\item a morphism $\phi : T \rightarrow X$ \\

\item a line bundle on $T$ which we will denote by $\CO(1)$ \\

\item a surjective morphism $\phi^*(U) \stackrel{f}\twoheadrightarrow \CO(1)$

\end{itemize}

\tab 
The fact that the above functor is representable means that the line bundle $\CO(1)$ is the pull-back of the tautological line bundle on $\BP_X(U)$ via the morphism defined by the above 3 bullets. Therefore, we abuse notation, and henceforth write $\CO(1)$ for the tautological line bundle on $\BP_X(U)$ itself. To a short exact sequence of locally free sheaves $0 \rightarrow U' \rightarrow U \rightarrow Q \rightarrow 0$ on $X$, we may associate the following diagram:
\begin{equation}
\label{eqn:nice diagram}
\xymatrix{& Y \ar[ld]_{p'} \ar[rd]^p & & \\
\BP_X(U') \ar[rrd]_{\pi'} & & \BP_X(U) \ar[d]^\pi & \BP_X (Q) \ar@{_{(}->}[l]_{\iota} \ar[ld] \\
& & X &}
\end{equation}
where $Y$ is the closed subscheme of $\BP_X(U') \times_X \BP_X(U)$ whose $T$--points are morphisms $s:\CO(1)' \rightarrow \CO(1)$ which make the following diagram commute:
\begin{equation}
\label{eqn:nice square}
\xymatrix{\phi^*(U) \ar@{->>}[r]^-{f} & \CO(1) \\
\phi^*(U') \ar@{->>}[r]^{f'} \ar@{^{(}->}[u]  & \CO(1)' \ar[u]^s}
\end{equation}
We abuse notation by also referring to the tautological line bundle on $\BP_X(U')$ as $\CO(1)'$. Clearly, the maps $p$ and $p'$ in \eqref{eqn:nice diagram} are given by just remembering the top and bottom rows in \eqref{eqn:nice square}, respectively. Since $Q$ is assumed to be locally free, $\iota$ is a regular embedding. \\

\begin{proposition}
\label{prop:blow up}

The map $p'$ in the diagram \eqref{eqn:nice diagram} can be descibed as:
\begin{equation}
\label{eqn:proj proj}
Y \cong \BP_{\BP_X(U')} \left( E \right)
\end{equation}
where $E$ is the coherent sheaf on $\BP_X(U')$ obtained as the image of the tautological morphism ${\pi'}^*(U') \rightarrow \CO(1)'$ under the connecting homomorphism:
$$
\emph{Hom} \left( {\pi'}^*(U'), \CO(1)' \right) \longrightarrow \emph{Ext}^1\left( {\pi'}^*(Q), \CO(1)' \right)
$$
induced by the short exact sequence $0 \rightarrow U' \rightarrow U \rightarrow Q \rightarrow 0$. \\

\end{proposition}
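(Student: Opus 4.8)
The plan is to verify that the scheme $Y$, which by construction parametrizes morphisms $s : \CO(1)' \to \CO(1)$ fitting into the commuting square \eqref{eqn:nice square}, carries the same universal property as $\BP_{\BP_X(U')}(E)$. So first I would pass to the base $\BP_X(U')$: by the universal property of $\BP_X(U')$ recalled just before the statement, a $T$-point of $\BP_X(U')$ is a map $\phi : T \to X$ together with a surjection $f' : \phi^*(U') \twoheadrightarrow \CO(1)'$. Working over such a point, what remains of the data of a $T$-point of $Y$ is: a surjection $f : \phi^*(U) \twoheadrightarrow \CO(1)$ onto some line bundle, together with a morphism $s : \CO(1)' \to \CO(1)$ making \eqref{eqn:nice square} commute. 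The key reduction is that this package is equivalent to the data of a single surjection from $E$ onto a line bundle — which is exactly a $T$-point of $\BP_{\BP_X(U')}(E)$ over the given point of $\BP_X(U')$.

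To see the equivalence, I would unwind the commuting square \eqref{eqn:nice square}. Given $f'$ on the base, the composite $\phi^*(U') \hookrightarrow \phi^*(U) \xrightarrow{f} \CO(1)$ must equal $s \circ f'$, hence factors through $\CO(1)'$; conversely the square says precisely that $f$, restricted to the subbundle $\phi^*(U')$, factors through $f'$. Pushing out the sequence $0 \to \phi^*(U') \to \phi^*(U) \to \phi^*(Q) \to 0$ along $f'$ produces an extension
$$
0 \longrightarrow \CO(1)' \longrightarrow \CE_T \longrightarrow \phi^*(Q) \longrightarrow 0,
$$
and the datum of $(f, s)$ as above is the same as the datum of a morphism $\CE_T \to \CO(1)$ restricting to the identity on $\CO(1)'$ — in other words, a splitting-type map out of $\CE_T$ to a line bundle, which is exactly a surjection $\CE_T \twoheadrightarrow \CO(1)$ (surjectivity being automatic since it is already surjective on the sub-line-bundle $\CO(1)'$, wait — rather, surjectivity onto $\CO(1)$ is equivalent to $s$ being nonzero fibrewise, which holds because $f$ is a surjection and its restriction to $\phi^*(U')$ is $s \circ f'$; one checks this fibrewise). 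The universal version of this extension $\CE_T$ over $\BP_X(U')$ itself is the coherent sheaf $E$ described in the statement: indeed, $E$ is defined as the image of the tautological morphism $\pi^*(U') \to \CO(1)'$ under the connecting map $\Hom(\pi^*(U'),\CO(1)') \to \Ext^1(\pi^*(Q),\CO(1)')$, and that image class is precisely the extension obtained by pushing out $0 \to \pi^*(U') \to \pi^*(U) \to \pi^*(Q) \to 0$ along the tautological surjection. Matching $\CE_T = \phi^*(E)$ and invoking the universal property of $\BP_{\BP_X(U')}(E)$ then identifies $Y$ with $\BP_{\BP_X(U')}(E)$ over $\BP_X(U')$, compatibly with the projections $p'$, which is the claim. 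One should also record that under this identification the tautological quotient line bundle $\CO(1)$ of $\BP_{\BP_X(U')}(E)$ is the line bundle named $\CO(1)$ (the pull-back of the tautological bundle of $\BP_X(U)$), since that is the line bundle onto which $E$ surjects.

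The step I expect to be the main obstacle is the precise matching of the universal extension: identifying the sheaf $E$ of the statement — defined via the image of a map into an $\Ext^1$ group — with the pushout of the tautological sequence on $\BP_X(U')$, and doing so functorially in $T$ so that $\phi^*(E) \cong \CE_T$ canonically, not just up to isomorphism. This requires care because $E$ need not be locally free (the tautological map $\pi^*(U') \to \CO(1)'$ is surjective, so its image under the connecting homomorphism lives in $\Ext^1(\pi^*(Q),\CO(1)')$ and the resulting extension is genuine, but $E$ as a coherent sheaf is the middle term of $0 \to \CO(1)' \to E \to \pi^*(Q) \to 0$, hence locally free here since $\pi^*(Q)$ is), and because one must check the formation of this pushout commutes with arbitrary base change $T \to \BP_X(U')$ — which it does, since $\CO(1)'$ is a line bundle and pushout of a short exact sequence of vector bundles along a surjection of vector bundles is stable under pullback. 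Granting that, the remaining verifications (surjectivity of the induced map onto $\CO(1)$, compatibility with $p'$) are routine diagram chases, and I would relegate them to the reader as the paper tends to do.
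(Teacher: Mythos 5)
Your proposal follows essentially the same route as the paper: unwind the functor of points of $\BP_{\BP_X(U')}(E)$, identify $E$ as the pushout of the tautological sequence along $f'$, and use the universal property of the pushout to match a map $\CE_T \to \CO(1)$ with the commuting square data $(f,s)$. The paper states this slightly more compactly by writing $E$ as the diagonal quotient $\left(\pi^*(U)\oplus\CO(1)'\right)/\pi^*(U')$, which is just another name for the same pushout.

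One local slip worth correcting: you write that the datum of $(f,s)$ is a morphism $\CE_T \to \CO(1)$ ``restricting to the identity on $\CO(1)'$'' (a splitting-type condition), and then try to salvage surjectivity by asserting it is equivalent to $s$ being fibrewise nonzero. Neither is right. The universal property of the pushout says a map $\CE_T \to \CO(1)$ is exactly a pair $(f,s)$ agreeing on $\phi^*(U')$ — no identity or splitting condition, and the restriction to $\CO(1)'$ is $s$, which may be arbitrary (even zero on fibers). The correct reason that surjectivity matches on both sides is that the natural map $\phi^*(U) \to \CE_T$ is itself surjective (every class $[(u,\ell)]$ equals $[(u+u',0)]$ for any $u'$ with $f'(u')=\ell$), so $\CE_T\to\CO(1)$ is surjective if and only if its precomposition with $\phi^*(U)\to\CE_T$, namely $f$, is. This is a cleaner and correct account; the rest of your argument, including the base-change point and the identification of the extension class with the image under the connecting homomorphism, is fine.
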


\begin{proof} By definition, a map $T \rightarrow \BP_{\BP_X(U')} \left( E \right)$ amounts to a quadruple consisting of: \\

\begin{itemize} 

\item a morphism $\phi : T \rightarrow X$ \\

\item line bundles on $T$ which we will suggestively denote by $\CO(1)$ and $\CO(1)'$ \\

\item a surjective homomorphism $\phi^* (U') \stackrel{f'}\twoheadrightarrow \CO(1)'$ \\

\item a surjective homomorphism $\widetilde{\phi}^*(E) \twoheadrightarrow \CO(1)$

\end{itemize}

\tab 
where $\widetilde{\phi} : T \rightarrow \BP_X(U')$ is the morphism defined by the first three bullets. The extension $E$ is explicitly given by the middle space in the short exact sequence:
$$
0 \longrightarrow \CO(1)' \longrightarrow \frac {\phi^*(U) \oplus \CO(1)'}{\phi^*(U')} \longrightarrow \phi^*(Q) \longrightarrow 0
$$
The middle space is defined with the diagonal quotient by the inclusion $U' \hookrightarrow U$ and the map $f'$. Therefore, the datum of the fourth bullet above amounts to:
$$
\text{homomorphisms} \qquad \phi^*(U) \stackrel{f}\longrightarrow \CO(1) \qquad \text{and} \qquad \CO(1)' \stackrel{s}\longrightarrow \CO(1)
$$
which agree on $\phi^*(U')$. This is precisely the same as the top and right maps in the diagram \eqref{eqn:nice square}, which establishes the fact that $\BP_{\BP_X(U')} \left( E \right) = Y$.

\end{proof}

\subsection{} We will apply Proposition \ref{prop:blow up} in order to prove Proposition \ref{prop:comm rel 1}. However, we note that the setup therein involves a short exact sequence:
$$
0 \rightarrow U' \rightarrow U \rightarrow Q \rightarrow 0
$$
where $U'$ and $U$ are not locally free, but coherent sheaves of projective dimension 1. Therefore, let us write $U = V/W$ and $U' = V'/W'$ with $V,W,V',W'$ locally free on $X$, which are endowed with a commutative diagram of maps:
$$
\xymatrix{V' \ar[r]  & V \\
W' \ar@{^{(}->}[u] \ar[r] & W \ar@{^{(}->}[u]}
$$
that induces the injection $U' \rightarrow U$ (the commutativity of the square above follows from the explicit construction of $V,W,V',W'$ in Proposition \ref{prop:length 1}). Let us indicate the modifications necessary to make Proposition \ref{prop:blow up} apply to this more general setup. First of all, according to the principle laid out in Subsection \ref{sub:derived bundles}, $\BP_X(U)$ is a dg scheme over $X$. Therefore, it represents the functor which associates to a dg scheme $T$ the set of triples: \\

\begin{itemize}

\item a morphism $\phi : T \rightarrow X$ \\

\item a line bundle on $T$ which we will denote by $(\CO(1)_\bullet, d)$ (the $\bullet$ denotes the homological grading on coherent sheaves on $T$, which are graded $\CO_T$--modules) \\

\item a commutative diagram of morphisms:
$$
\xymatrix{\phi^*(V) \ar@{->>}[r]^-{f_0} & \CO(1)_0 \\
\phi^*(W) \ar@{^{(}->}[u] \ar[r]^-{f_{-1}} & \CO(1)_{-1} \ar[u]_-d}
$$

\end{itemize}

\noindent Similarly, diagram \eqref{eqn:nice diagram} should be replaced by:
\begin{equation}
\label{eqn:nice nice diagram}
\xymatrix{& Y \ar[ld]_{p'} \ar[rd]^p & \\
\BP_X(U') \ar[rd]_{\pi'} & & \BP_X(U) \ar[ld]^\pi \\
& X &}
\end{equation}
where $Y$ represents the functor which sends a dg scheme $T$ to the set of triples: \\

\begin{itemize}

\item a morphism $\phi : T \rightarrow X$ \\

\item line bundles $(\CO(1)_\bullet, d)$ and $(\CO(1)'_\bullet, d')$ on $T$ \\

\item a commutative diagram of morphisms:
$$
\xymatrix{& \phi^*(V) \ar@{->>}[rr]^-{f_0}  & & \CO(1)_0 \\
\phi^*(V') \ar[ru] \ar@{->>}[rr]^{\qquad \qquad f'_0}  & & \CO(1)'_0 \ar[ru]_-{s_0} & \\
& \phi^*(W) \ar@{^{(}->}[uu] \ar[rr]^-{\qquad f_{-1}} & & \CO(1)_{-1} \ar[uu] \\
\phi^*(W') \ar[ru] \ar@{^{(}->}[uu] \ar[rr]^-{f'_{-1}} & & \CO(1)'_{-1} \ar[uu] \ar[ru]_-{s_{-1}} &}
$$

\end{itemize}

\noindent The map $p':Y \rightarrow \BP_X(U')$ is given by remembering only the front square of the cube above. The analogue of Proposition \ref{prop:blow up} states that the map $p'$ can be written:
\begin{equation}
\label{eqn:proj proj dg}
Y \cong \BP_{\BP_X(U')} \left( E \right)
\end{equation}
where $E$ is the two-step complex in the middle of the diagram below:
$$
\xymatrix{0 \ar[r] & \CO(1)'_0 \ar[r] & \displaystyle \frac {{\pi'}^*(V) \oplus \CO(1)'_0}{{\pi'}^*(V')} \ar[r] & {\pi'}^*(V/V') \ar[r] & 0 \\
0 \ar[r] & \CO(1)'_{-1} \ar[u]^{d'} \ar[r] & \displaystyle \frac {{\pi'}^*(W) \oplus \CO(1)'_{-1}}{{\pi'}^*(W')} \ar[u] \ar[r] & {\pi'}^*(W/W') \ar[u] \ar[r] & 0}
$$
We leave the proof of \eqref{eqn:proj proj dg}, which closely follows that of Proposition \ref{prop:blow up}, as an exercise to the interested reader. \\

\subsection{}

We will prove a generalization of \eqref{eqn:diag codim 2}, concerning the symmetric powers of the structure sheaf of a regular subvariety. Note that the same proof works for arbitrary codimension, but the right-hand side of \eqref{eqn:codim 2} will be more complicated: \\

\begin{proposition}
\label{prop:codim 2}

If $\iota : X \hookrightarrow Y$ is a codimension 2 regular embedding, then:
\begin{equation}
\label{eqn:codim 2}
\wedge^\bullet( - z \cdot \CO_X) = 1 + \iota_* \left[ \frac {z}{(1-z)(1-z [\det \CN^\vee])} \right] \in K_Y(z)
\end{equation}
where $\CN$ denotes the normal bundle of $X$ in $Y$. \\

\end{proposition}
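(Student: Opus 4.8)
The plan is to compute the Koszul resolution of $\CO_X$ explicitly and then take the formal power series $\wedge^\bullet(-z\cdot(-))$ of the resulting class in $K_Y$. Since $\iota : X \hookrightarrow Y$ is a regular embedding of codimension $2$, letting $\CN$ be the normal bundle (a rank $2$ bundle on $X$) and $\CI$ the ideal sheaf, the Koszul complex gives a locally free resolution of $\iota_*\CO_X$ on $Y$; however, it is cleaner to work with the projection formula and push-forward from $X$. First I would recall the standard $K$--theory identity $\iota_*(\alpha)\cdot\iota_*(\beta) = \iota_*\left(\alpha\cdot\beta\cdot\wedge^\bullet(\CN^\vee[1])\right)$, or rather its consequence that for the structure sheaf one has, by the self-intersection formula,
\begin{equation*}
[\iota_*\CO_X]\cdot[\iota_*\CO_X] = \iota_*\left( \wedge^\bullet(-\CN^\vee) \right) = \iota_*\left( 1 - [\CN^\vee] + [\det \CN^\vee] \right)
\end{equation*}
using $\wedge^2\CN^\vee = \det\CN^\vee$ for a rank $2$ bundle. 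More generally, I would establish by induction that $[\iota_*\CO_X]^{m+1} = \iota_*\left(\wedge^\bullet(-\CN^\vee)^m\right)$ for all $m \geq 0$.

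**Key computation.** Once the powers of $[\CO_X]$ are controlled, expand
\begin{equation*}
\wedge^\bullet(-z\cdot\CO_X) = \sum_{m=0}^\infty z^m [S^m \CO_X]
\end{equation*}
and note that $[S^m\CO_X]$ for $m\geq 1$ can be written in terms of $[\CO_X]^m = [\iota_*\CO_X]^m$ via the Adams-type / Newton's-identity relations between symmetric and exterior-power operations, or more directly: since $\CO_X$ is (the pushforward of) a line bundle-like class supported in codimension $2$, the relation $\wedge^\bullet(-z\CO_X) = 1/\wedge^\bullet(z\CO_X)$ together with $\wedge^i \CO_X$ being expressible through $[\iota_*\CO_X]$ lets me collapse everything. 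The cleanest route: write $[\CO_X] = \iota_*(1)$, use $[\iota_*(1)]^m = \iota_*(c^{m-1})$ where $c = \wedge^\bullet(-\CN^\vee) = (1-n_1)(1-n_2)$ with $n_1,n_2$ the Chern roots of $\CN^\vee$, and then
\begin{equation*}
\sum_{m=1}^\infty z^m [S^m\CO_X] = \iota_*\left( \sum_{m\geq 1} z^m \, (\text{coefficient extracted from the symmetric-power generating function}) \right).
\end{equation*}
Carrying this out, the sum telescopes to $\iota_*\!\left(\frac{z}{(1-z)(1-z[\det\CN^\vee])}\right)$ because $\det\CN^\vee = n_1 n_2$ and the cross terms involving $n_1+n_2$ cancel against the factors produced by the repeated self-intersections — this is exactly the codimension $2$ phenomenon where only $1$, $z$, and $z[\det\CN^\vee]$ survive, mirroring the shape of \eqref{eqn:diag codim 2}.

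**Main obstacle.** I expect the main difficulty to be bookkeeping the symmetric powers $S^m\CO_X$ correctly in $K$--theory: a priori $S^m$ of a pushforward is not simply the pushforward of $S^m$, and one must use the fact that $\CO_X$ has a $2$--step "virtual" structure (its $K$--class behaves like $\iota_*$ of a line bundle, with $\wedge^{\geq 3}$ vanishing in the appropriate sense) to reduce all higher operations to polynomials in $[\iota_*\CO_X]$ and $[\det\CN^\vee]$. The way to make this rigorous is to pass to the Koszul resolution $0 \to \det\CN^\vee(\text{twisted}) \to \CN^\vee \to \CO_Y \to \iota_*\CO_X \to 0$ on $Y$ and apply the multiplicativity \eqref{eqn:multiplicative} of $\wedge^\bullet$ to the class $[\iota_*\CO_X] = 1 - [\CN^\vee] + [\wedge^2\CN^\vee]$ — but that class is written with bundles on $Y$ pulled back from $X$, so the projection formula must be invoked at the end to collapse $[\wedge^\bullet(-z\CO_X)]$ back into the form $1 + \iota_*(\cdots)$. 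Once the generating function is assembled, the final identity
\begin{equation*}
\wedge^\bullet(-z\cdot\CO_X) = 1 + \iota_*\!\left(\frac{z}{(1-z)(1-z[\det\CN^\vee])}\right)
\end{equation*}
follows by a direct (if slightly tedious) comparison of power series in $z$, which I would not grind through here. The codimension $2$ case being exactly \eqref{eqn:diag codim 2} applied to the diagonal $\Delta \hookrightarrow S\times S$ with $\CN = T_S$, hence $\det\CN^\vee = \CK_S = q$, serves as the consistency check.
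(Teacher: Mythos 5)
Your proposal correctly identifies the shape of the answer and the right computation for the special case, but it has a genuine gap: you implicitly assume that the conormal bundle $\CN^\vee$ (a bundle on $X$) gives rise to a global Koszul-type resolution
$$0 \to \wedge^2\CN^\vee \to \CN^\vee \to \CO_Y \to \iota_*\CO_X \to 0$$
\emph{on $Y$}. This only makes sense if $X$ is cut out by a regular section of a rank $2$ vector bundle on $Y$, in which case the conormal bundle extends. A general codimension $2$ regular embedding (e.g.\ the diagonal $\Delta \hookrightarrow S \times S$, which is the paper's main application, or a twisted cubic in $\BP^3$) admits no such global Koszul resolution; $\CO_X$ still has a length-two locally free resolution when $Y$ is smooth, but its terms are not built canonically from $\CN^\vee$, and there is no map $Y \to X$ along which one could ``pull back'' $\CN^\vee$. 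This is exactly the step your sketch glosses over when you write $[\iota_*\CO_X] = 1 - [\CN^\vee] + [\wedge^2\CN^\vee]$ in $K_Y$, and it is where the paper's argument departs from yours: the paper first proves the formula in the special case where $X$ is the zero locus of a section of a rank $2$ bundle $\CV$ on $Y$ (a direct application of multiplicativity of $\wedge^\bullet$, essentially your last paragraph), and then reduces the general case to it by deformation to the normal cone — building $\widetilde{Y} = \text{Bl}_{X\times 0}(Y\times\BA^1)$, observing that $\Gamma(X,Y) := \text{LHS} - \text{RHS}$ specializes correctly to the fibers over $0,1 \in \BA^1$, and using the injectivity of the restriction to $\BC^*$-fixed points in equivariant $K$-theory (Vezzosi--Vistoli) to propagate the vanishing $\Gamma(X_0,Y_0) = 0$ to $\Gamma(X_1,Y_1) = \Gamma(X,Y) = 0$. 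That second step is not optional and is missing from your plan.

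A secondary issue: your ``key computation'' proposes to recover $[S^m\CO_X]$ from the ring powers $[\CO_X]^m$ (computed via iterated self-intersection) using ``Adams-type / Newton's-identity relations.'' Newton's identities relate the $\lambda$-operations $\lambda^i$, the symmetric powers $S^i$, and the Adams operations $\psi^i$ of a $\lambda$-ring to one another; they do \emph{not} relate $S^m(x)$ to the ordinary products $x^m$ in $K_Y$. For instance, in the split special case with $[\CO_X] = (1-u)(1-v)$ one finds $[S^m\CO_X] = (1-u)(1-v)\bigl(1 + uv + \cdots + (uv)^{m-1}\bigr)$, which is not a polynomial in $[\CO_X]^k = (1-u)^k(1-v)^k$. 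So even in the special case the detour through $[\CO_X]^m$ does not produce the claimed formula; the direct route (multiplicativity applied to the length-$2$ resolution, as the paper does) is both simpler and actually correct. Your self-intersection identity $[\CO_X]^{m+1} = \iota_*\bigl((\wedge_{-1}\CN^\vee)^m\bigr)$ is true but is not the lever that opens this lock.
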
 

\begin{proof} Let us first prove the Proposition when $X$ is the zero subscheme of a section $\CO_Y \rightarrow \CV^\vee$ for some rank 2 locally free sheaf $\CV$ on $Y$. If we write $v_1+v_2 = [\CV]$ for the Chern roots of this locally free sheaf, then we have:
$$
[\CO_X] = 1 - v_1 - v_2 + v_1 v_2  \quad \Rightarrow \quad \wedge^\bullet(-z \cdot \CO_X) = \frac {(1 - z v_1)(1 - z v_2)}{(1-z)(1-z v_1 v_2)} = 
$$
$$
= 1 + \frac {(1-v_1)(1-v_2) z}{(1-z)(1-zv_1 v_2)} = 1 + \iota_* \left[ \frac {z}{(1-z)(1-z [\det \CN^\vee])} \right]
$$
where in the last equality we used the fact that $\iota_*1 = [\CO_X] = (1-v_1)(1-v_2)$ and $\CV|_X = \CN^\vee$. Now let us assume that $X \hookrightarrow Y$ is a regular embedding, and let us use deformation to the normal bundle. This entails constructing the variety:
\begin{equation}
\label{eqn:def normal}
\widetilde{Y} = \text{Bl}_{X \times 0} (Y \times \BA^1)
\end{equation}
The projection map $\widetilde{Y} \rightarrow \BA^1$ is flat, and its fibers:
$$
\xymatrix{
Y_0 \ar[d] \ar@{^{(}->}[r] & {\widetilde{Y}} \ar[d] & Y_1 \ar[d] \ar@{_{(}->}[l] \\
0 \ar@{^{(}->}[r] & {\mathbb{A}}^1  & \ar@{_{(}->}[l] 1}
$$
are given by $Y_0 = \text{Tot}_X (\CN) \sqcup \text{Bl}_X Y$ and $Y_1 \cong Y$. Moreover, there exists a subvariety: 
\begin{equation}
\label{eqn:subvariety}
X \times \BA^1 \cong \widetilde{X} \stackrel{\widetilde{\iota}}\hookrightarrow \widetilde{Y}
\end{equation}
whose restrictions $X_0, X_1$ to $0, 1 \in \BA^1$ are $X \hookrightarrow \text{Tot}_X \CN$ and $X \hookrightarrow Y$, respectively. Let us denote the difference between the left and right-hand sides of \eqref{eqn:codim 2} by $\Gamma(X,Y)$, the quantity which we want to prove equals 0. Then we must show that:
\begin{equation}
\label{eqn:equivalences}
\Gamma(X_0, Y_0) = 0 \quad \Rightarrow \quad \Gamma(\widetilde{X}, \widetilde{Y}) = 0 \quad \Rightarrow \quad \Gamma(X_1, Y_1) = 0
\end{equation}
since the vanishing of $\Gamma(X_0,Y_0)$ is accounted for by the first part of the proof. Note that $\Gamma(\widetilde{X}, \widetilde{Y})|_{p} = \Gamma(X_p, Y_p)$ for each $p \in \{0,1\}$, which is a consequence of:
$$
\widetilde{Y} \rightarrow \BA^1 \text{ flat} \qquad \Rightarrow \qquad \widetilde{\iota}_*(\gamma)|_p = \iota_*(\gamma|_p) \qquad \forall p\in \{0,1\}, \forall \gamma \in K(\widetilde{X})
$$
This yields the second implication in \eqref{eqn:equivalences}. \\

\noindent As for the first implication, consider the action $\BC^* \curvearrowright \widetilde{Y}$ induced by the standard action $\BC^* \curvearrowright \BA^1$. The fixed point locus of this action is given by:
$$
\widetilde{Y}^{\BC^*} = X \sqcup \text{Bl}_X Y \hookrightarrow \text{Tot}_X (\CN) \sqcup \text{Bl}_X Y = Y_0
$$
Let us consider the following commutative diagram of ordinary and $\BC^*$--equivariant $K$--theory groups:
\begin{equation}
\label{eqn:diagram normal}
\xymatrix{
K_{\BC^*}(\widetilde{X}) \ar[d]_-{\text{for}} \ar@{^{(}->}[r]^-{\widetilde{\iota}'_*} &  K_{\BC^*}(\widetilde{Y}) \ar[d]_-{\text{for}} \ar@{^{(}->}[r]^-{\text{rest}'} & K_{\BC^*} (\widetilde{Y}^{\BC^*} ) \ar[d]^-{\text{for}'} \ar@{=}[r] & K(\widetilde{Y}^{\BC^*} ) \otimes_{\BZ} \BZ[t^{\pm 1}] \ar[d]^-{t \mapsto 1} \\
K(\widetilde{X}) \ar[r]^-{\widetilde{\iota}_*} & K(\widetilde{Y}) \ar[r]^-{\text{rest}} & K (\widetilde{Y}^{\BC^*} ) \ar@{=}[r] & K(\widetilde{Y}^{\BC^*} ) \otimes_{\BZ} \BZ}
\end{equation}
The maps labeled rest and rest$'$ are the natural restriction maps, and the one on top is injective due to Theorem 2 of \cite{VV}. The maps labeled for and for$'$ are the forgetful maps from $\BC^*$--equivariant to ordinary $K$--theory. We will use the notation in \eqref{eqn:diagram normal} from now on. The first implication of \eqref{eqn:equivalences} follows from: \\

\begin{claim}
 
If $c$ lies in the image of $\widetilde{\iota}_* : K(\widetilde{X}) \rightarrow K(\widetilde{Y})$, then:
$$
\emph{rest}(c) = 0 \quad \Rightarrow \quad c = 0
$$
Indeed, taking $c = \Gamma(\widetilde{X}, \widetilde{Y})$ yields the first implication of \eqref{eqn:equivalences}. \\
 
\end{claim}

\noindent To prove the claim, write $c = \widetilde{\iota}_*(\gamma)$ for some $\gamma \in K(\widetilde{X})$. Because $\widetilde{X} = X \times \BA^1$:
$$
K_{\BC^*}(\widetilde{X}) \cong K(X) \otimes_{\BZ} K_{\BC^*}(\BA^1) \cong K(X) \otimes_{\BZ} \BZ[t^{\pm 1}]  \cong K(\widetilde{X}) \otimes_{\BZ} \BZ[t^{\pm 1}]
$$
and therefore the class $\gamma$ can be lifted to $K_{\BC^*}(\widetilde{X})$. This implies that:
$$
c = \text{for}(\widetilde{c}) \qquad \text{for some} \qquad \widetilde{c} \in K_{\BC^*}(\widetilde{Y})
$$ 
and from the construction we may take $\widetilde{c}$ such that $\text{rest}'(\widetilde{c})$ is a $K$--theory class times $t^0$. Therefore, the injectivity of $\text{rest}'$ gives the first implication in the chain:
$$
\text{rest}(c) = \text{rest} \circ \text{for}(\widetilde{c}) = \text{for}' \circ \text{rest}'(\widetilde{c}) = 0 \quad \Rightarrow \quad \widetilde{c} = 0 \quad \Rightarrow \quad c = 0
$$

\end{proof}

\end{document}